\newtheorem{theorem}{Theorem}[section]
\newtheorem{corollary}{Corollary}[section]
\newtheorem{lemma}{Lemma}[section]
\newtheorem{proposition}{Proposition}[section]
\newtheorem{assumption}{Assumption}[section]
\theoremstyle{definition}
\newtheorem{remark}{Remark}[section]
\def\X{{\text{\boldmath $X$}}}
\DeclareMathOperator{\Var}{Var}
\DeclareMathOperator{\Cov}{Cov}
\newcommand{\argmin}{\mathop{\rm arg~min}\limits}
\def\diag{{\rm diag}}
\newcommand{\R}{\mathbb{R}}
\renewcommand{\tilde}{\widetilde}
\renewcommand{\hat}{\widehat}
\begin{document}

\title[]{Local polynomial trend regression for spatial data on $\mathbb{R}^d$}
\thanks{D. Kurisu is partially supported by JSPS KAKENHI Grant Number 20K13468. The authors would like to thank Takuya Ishihara, Taisuke Otsu, Peter Robinson, Masayuki Sawada, and Yoshihiro Yajima for their helpful comments and suggestions.
} 

\author[D. Kurisu]{Daisuke Kurisu}
\author[Y. Matsuda]{Yasumasa Matsuda}

\date{First version: July 18, 2022. This version: \today}

\address[D. Kurisu]{Center for Spatial Information Science, The University of Tokyo\\
5-1-5, Kashiwanoha, Kashiwa-shi, Chiba 277-8568, Japan.
}
\email{daisukekurisu@csis.u-tokyo.ac.jp}
\address[Y. Matsuda]{Graduate School of Economics and Management, Tohoku University\\
Sendai 980-8576, Japan.
}
\email{yasumasa.matsuda.a4@tohoku.ac.jp}

\begin{abstract}
This paper develops a general asymptotic theory of local polynomial (LP) regression for spatial data observed at irregularly spaced locations in a sampling region $R_n \subset \mathbb{R}^d$. We adopt a stochastic sampling design that can generate irregularly spaced sampling sites in a flexible manner including both pure increasing and mixed increasing domain frameworks. We first introduce a nonparametric regression model for spatial data defined on $\mathbb{R}^d$ and then establish the asymptotic normality of LP estimators with general order $p \geq 1$. We also propose methods for constructing confidence intervals and establishing uniform convergence rates of LP estimators. Our dependence structure conditions on the underlying processes cover a wide class of random fields such as L\'evy-driven continuous autoregressive moving average random fields. As an application of our main results, we discuss a two-sample testing problem for mean functions and their partial derivatives. 
\end{abstract}

\keywords{irregularly spaced spatial data, L\'evy-driven moving average random field, local polynomial regression, two-sample test\\ \indent
\textit{MSC2020 subject classifications}: 62M30, 62G08, 62G20}

\maketitle
\section{Introduction}

The goal of this paper is to develop a general asymptotic theory for local polynomial (LP) estimators of any order $p \geq 1$ for spatial data under irregular sampling on $\mathbb{R}^d$. We propose a nonparametric regression model for spatial data $\{Y(\bm{x}_{n,i})\}_{i=1}^{n}$ observed at irregularly spaced sampling sites $\{\bm{x}_{n,i}\}_{i=1}^{n}$ over a sampling region $R_n \subset \mathbb{R}^d$ ($d \geq 1$). Precisely, each $Y(\bm{x}_{n,i})$ is explained by the sum of a deterministic spatial trend function (i.e. mean function), a random field on $\mathbb{R}^d$ that represents spatial dependence, and a location specific measurement error (see Section \ref{sec:model} for details). In many scientific fields, such as ecology, geology, meteorology, and seismology, spatial samples are often collected over irregularly spaced points from continuous random fields because of physical constraints. To cope with irregularly spaced sampling sites, we adopt the stochastic sampling scheme of \cite{La03a}, which allows the sampling sites to have a non-uniform density in the sampling region and allows the number of sampling sites $n$ to grow at a different rate from the volume of the sampling region $A_n$. We design this scheme to accommodates both the pure increasing domain case ($\lim_{n \to \infty}A_n/n = \kappa \in (0,\infty)$) and the mixed increasing domain case ($\lim_{n \to \infty}A_n/n = 0$).  We note that this scheme covers possible asymptotic regimes that would validate asymptotic inference for spatial data. Although the infill asymptotics is excluded from our regime, our sampling design is general enough as it is known that the infill asymptotics does not work for that of even sample mean (cf.\cite{La96}). Refer to \cite{La03b}, \cite{LaZh06}, \cite{MaYa09}, \cite{BaLaNo15}, \cite{KuKaSh21}, and \cite{Ku22a} for discussions on the stochastic spatial sampling design. Further, our model can be seen as a spatial extension of locally stationary time series introduced in \cite{Da97}.

The contributions of this paper are as follows. First, we (i) establish the asymptotic normality of LP estimators of the mean function of the proposed model, (ii) construct consistent estimators of their asymptotic variances, and (iii) derive uniform convergence rates of LP estimators over a compact set. The results (i) and (ii) enable us to evaluate the bias and variance/covariance matrix (of the asymptotic distribution) of LP estimators and, as a result, to construct confidence intervals of LP estimators, which would work for a hypothesis testing on the mean function. We discuss a two-sample test for the partial derivatives as well as the mean function as an application of our results. Additionally, in the literature of causal inference, local polynomial fitting is known as an important tool to analyze average treatment effect of interventions, an example of which is the regression discontinuity desings (RDDs) (cf. \cite{HaTova01} and \cite{CaCaTi14}). Existing methods for RDDs often assume i.i.d. even for spatial data (cf. \cite{KeTi15} and \cite{EhSe18}). We claim our results pave the way for a new framework of RDDs for spatially dependent data. To establish the result (iii), we first consider general kernel estimators and derive their uniform convergence rates. The uniform convergence rates of LP estimators can be given as special cases of the results. Since the general estimators include many kernel-based estimators such as, kernel density, local constant (LC), local linear (LL), and LP estimators for random fields on $\mathbb{R}^d$ with irregularly spaced sampling sites, the results are of independent theoretical interest. We note that the general results are also useful for evaluating both the bias and variance terms of LP estimators. Particularly, the results on uniform convergence rates enable us to predict the values of the mean function uniformly on a spatial region that does not contain sampling sites. 

Second, we provide examples of random fields that satisfy the mixing assumptions under which the asymptotic normality of LP estimators will be established. Specifically, we show that a broad class of L\'evy-driven moving average (MA) random fields, which include continuous autoregressive moving average (CARMA) random fields (cf. \cite{BrMa17}), satisfies our assumptions. The CARMA random fields are known as a rich class of models for spatial data that can represent non-Gaussian random fields by introducing non-Gaussian L\'evy random measures (cf. \cite{BrMa17}, \cite{MaYa18}, and \cite{Ku22a}).
However, mixing properties of L\'evy-driven MA random fields have not been investigated since it is often difficult to check mixing conditions in the ways considered by \cite{LaZh06} and \cite{BaLaNo15} for general (possibly non-Gaussian) random fields on $\mathbb{R}^d$, which will be discussed later from the viewpoint of our theoretical analysis. We show that a wide class of L\'evy-driven MA random fields can be approximated by $m_n$-dependent random fields with $m_n \to \infty$ as $n \to \infty$. We claim that the approximation will work for the flexible modeling of nonparametric, nonstationary and possibly non-Gaussian spatial data on $\mathbb{R}^d$ by addressing an open question on dependence structure of statistical models built on L\'evy-driven MA random fields.

\subsection*{Connections to the literature}
There is fairly extensive literature on LC, LL, and LP estimators for dependent data. For stationary and regularly spaced time series (this case corresponds to stationary random fields with regular sampling on $\mathbb{Z}$), we refer to \cite{Ha08} and \cite{ZhWu08} for LC estimators and \cite{Ma96a, Ma96b}, and \cite{MaFa97} for LP estimators. For nonstationary and regularly spaced time series, we refer to \cite{Kr09} and \cite{Vo12} for LC estimators, and \cite{ZhWu09} and \cite{ZhWu15} for LL estimators of quantile curves and conditional mean functions, respectively. For stationary spatial data with regular sampling on $\mathbb{Z}^d$, we refer to \cite{MaSt10} for local constant (LC) estimation of the spatial trend function with stationary and spatially dependent errors, and \cite{LuCh02, LuCh04} for LC estimation and \cite{HaLuTr04} for local linear (LL) estimation of the conditional mean function with covariates, and \cite{HaLuYu09} for LL estimation of the conditional quantile function with covariates. For stationary spatial data with irregular sampling on $\mathbb{Z}^d$, we refer to \cite{MaSeOu17} for LL estimation of the conditional mean function with covariates. For nonstationary spatial data with (possibly) irregular sampling on $\mathbb{Z}^d$, we refer to \cite{Ro11} for LC estimation and \cite{Je12} for LL estimation of the conditional mean function with covariates. For spatial data with irregular sampling on $\mathbb{R}^d$, we refer to \cite{Ku19b} and \cite{Ku22a} who investigate LC estimators for the conditional mean function with stationary and nonstationary covariates, respectively. There is a large number of studies on the parametric estimation of the trend function in a spatial trend model with stationary and spatially dependent errors for spatial data on $\mathbb{R}^d$ (e.g. \cite{MaMa84}, \cite{DiTaMo98}, and \cite{Zh02}, just to name a few) and existing results on local polynomial (LP) estimators are available only for stationary random fields under regular sampling on $\mathbb{Z}$, i.e., regularly spaced stationary time series, while no studies on LL and LP estimation of the trend function in a spatial trend model with stationary and spatially dependent errors have been known under irregular sampling on $\mathbb{R}^d$ with $d \geq 2$.

To the best of our knowledge, our work is the first attempt to establish an asymptotic theory on local polynomial fitting for the spatial trend function of spatial data on $\mathbb{R}^d$ by (i) establishing the asymptotic normality and uniform convergence rates of LP estimators, (ii) providing a way to construct confidence intervals of LP estimators, and (iii) showing the applicability of our theoretical results to a wide class of L\'evy-driven MA random fields. From a theoretical point of view, this paper has advantages over the existing studies of \cite{La03a} and \cite{LaZh06} in the fields of irregularly spaced data analysis. Specifically, (i) we extend the coupling technique used in \cite{Yu94} for time series to that for irregularly spatial data to establish uniform convergence rates of LP estimators. The difficulties in the extension come from no natural ordering for spatial data and the number of observations in each block constructed is random, and hence our approach to blocking construction for establishing uniform rates is quite different from those in \cite{La03a} and \cite{LaZh06} whose proofs essentially rely on approximating the characteristic function of the weighted sample mean by that of independent blocks. (ii) We have confirmed concrete examples of random fields that satisfy our assumptions in detail. Verification of our regularity conditions to L\'evy-driven MA fields is indeed non-trivial and relies on several probabilistic techniques from L\'evy process theory and theory of infinitely divisible random measures (cf. \cite{Be96}, \cite{Sa99}, and \cite{RaRo89}). 

The rest of the paper is organized as follows. In Section \ref{sec:settings}, we introduce our nonparametric regression model for spatial data with irregularly spaced sampling sites. In Section \ref{sec:LPR}, we define local polynomial estimators as solutions of a multivariate weighted least squares problem. In Section \ref{sec:main}, we establish the asymptotic normality of LP estimators. In Section \ref{sec:LP-unif}, we provide the uniform convergence rates of LP estimators and construct estimators of their asymptotic variances. Appendix includes the proof of the asymptotic normality of LP estimators (Theorem \ref{thm: LP-CLT}). The supplementary material contains discussion on a two-sample test for the mean functions and their partial derivatives and examples of the random fields that satisfies our assumptions, and proofs for other results.

\subsection{Notation}
For any vector $\bm{x} = (x_{1},\dots, x_{q})' \in \R^{q}$, let $|\bm{x}| =\sum_{j=1}^{q}|x_{j}|$ and $\|\bm{x}\| =\sqrt{\sum_{j=1}^{q}x_j^2}$ 
denote the $\ell^{1}$-norm and $\ell^{2}$-norms of $\bm{x}$, respectively. For any set $A \subset \R^{d}$ and any vector $\bm{a}=(a_1,\dots, a_d)' \in (0,\infty)^d$, let $|A|$ denote the Lebesgue measure of $A$, let $[\![A]\!]$ denote the number of elements in $A$, and let $\bm{a}A = \{(a_1x_1,\dots a_dx_d): \bm{x} = (x_1,\dots,x_d) \in A\}$. 
For any positive sequences $a_{n}, b_{n}$, we write $a_{n} \lesssim b_{n}$ if there is a constant $C >0$ independent of $n$ such that $a_{n} \leq Cb_{n}$ for all $n$,  $a_{n} \sim b_{n}$ if $a_{n} \lesssim b_{n}$ and $b_{n} \lesssim a_{n}$. For a sequence of random variables $\{\bm{X}_i\}_{i \geq 1}$, let $\sigma(\{\bm{X}_{i}\}_{i \geq 1})$ denote the $\sigma$-field generated by $\{\bm{X}_i\}_{i \geq 1}$. Let $E_{\bm{X}}$ denote the expectation with respect to a sequence of random variables $\{\bm{X}_{i}\}_{i \geq 1}$ and let $P_{\cdot \mid \bm{X}}$ and $E_{\cdot \mid \bm{X}}$ denote the conditional probability and expectation given $\sigma(\{\bm{X}_{i}\}_{i \geq 1})$, respectively. For any real-valued random variable $X$ and $\tau \in (0,1)$, let $q_{1-\tau} = \inf \{x \in \mathbb{R}: P(X \leq x) \geq 1-\tau\}$ be the $(1-\tau)$-quantile of $X$. For $a \in \mathbb{R}$ and $b>0$, we use the shorthand notation $[a \pm b] = [a-b,a+b]$. 

\section{Settings}\label{sec:settings}

In this section, we discuss the mathematical settings of our model (Section \ref{sec:model}), sampling design (Section \ref{sec:sampling}), and spatial dependence structure (Section \ref{sec:dependence}). 

\subsection{Model}\label{sec:model}

Usually it is impossible to estimate consistently a model for nonstationary processes, since the domain of functions to be estimated gets larger. Dahlhaus avoids the difficulty by desigining a function over a fixed interval in the following way.
\cite{Da97} introduced a locally stationary process with a time-varying mean function for the modeling of nonstationary time series: $Y_T(t) = m\left({t \over T}\right) + \xi_{T}(t),\ t = 1,\dots,T,$
where $m:[0,1] \to \mathbb{R}$ is a (time-varying) mean function and $\{\xi_{T}(t)\}$ is a sequence of zero-mean locally stationary time series with a time-varying transfer function (see Definition 2.1 in \cite{Da97} for details). The model setting of $m(t/T)$ instead of $m(t)$ makes the mean function have the fixed domain of $[0,1]$, which provides the asymptotic scheme on which consistent estimation is available. We extend his framework to spatial data with irregular sampling on $\mathbb{R}^d$. 

In particular, consider the following nonparametric regression model: 
\begin{align}\label{eq:model}
Y(\bm{x}_{n,i}) &= m\left({\bm{x}_{n,i} \over A_n}\right) + \eta\left({\bm{x}_{n,i} \over A_n}\right)e(\bm{x}_{n,i}) + \sigma_{\varepsilon}\left({\bm{x}_{n,i} \over A_n}\right)\varepsilon_{i},\\
&:= m\left({\bm{x}_{n,i} \over A_n}\right) + e_{n,i} + \varepsilon_{n,i},\ \bm{x}_{n,i} = (x_{ni,1},\dots, x_{ni,d})' \in R_n,\ i=1,\dots,n, \nonumber
\end{align}
where $R_n = \prod_{j=1}^{d}[-A_{n,j}/2,A_{n,j}/2]$, $A_n = \prod_{j=1}^{d}A_{n,j}$, ${\bm{x}_{n,i} \over A_n} = \left({x_{ni,1} \over A_{n,1}},\dots, {x_{ni,d} \over A_{n,d}}\right)'$ with $A_{n,j} \to \infty$ as $n \to \infty$, $m: [-1/2,1/2]^d \to \mathbb{R}$ is the mean function, $\bm{e} = \{e(\bm{x}): \bm{x} \in \mathbb{R}^d\}$ is a stationary random field defined on $\mathbb{R}^d$ with $E[e(\bm{x})] = 0$ and $E[e^2(\bm{x})]=1$ for any $\bm{x} \in \mathbb{R}^d$, $\eta:[-1/2,1/2]^d \to (0,\infty)$ is the variance function of spatially dependent random variables $\{e_{n,i}\}$, $\{\varepsilon_{i}\}$ is a sequence of i.i.d. random variables such that $E[\varepsilon_{i}] = 0$ and $E[\varepsilon_{i}^2] = 1$, and $\sigma_{\varepsilon}: [-1/2,1/2]^d \to (0,\infty)$ is the variance function of random variables $\{\varepsilon_{n,i}\}$. The mean function $m$ represents deterministic spatial trend, the random field $\bm{e}$ represents spatial correlation, and the random variables $\{\varepsilon_{n,i}\}$ can represent location specific measurement error. 

%\dk{MORE DISCUSSION ON HOW OUR MODEL APPROXIMATES A MODEL IN POLUMATION}

\begin{remark}[Discussion on the model]
Our model, simplified for brief arguments here, is given by, for $\bm{x}_{n,i}\in R_n$, $Y(\bm{x}_{n,i})=m(\bm{x}_{n,i}/A_n)+e(\bm{x}_{n,i})+\varepsilon_{n,i}$, $i=1,\ldots,n$, where $e(\bm{x})$ is a stationary random field on $\mathbb{R}^d$ and $\varepsilon_i$ is a sequence of i.i.d. random variables. The trend function to be estimated in our model depends on the sampling region and hence the population model depends on the sample size. Discussions regarding models dependent on sample sizes have been prevalent in the context of nonstationary time series analysis. Furthermore, it is worth noting that no known asymptotic regime exists to validate our local polynomial estimation when the error incorporates a stationary random field component. On the other hand, for i.i.d. error cases without stationary components, it is known that our local polynomial estimation is validated asymptotically as the sample size tends to be infinity over a fixed domain $D$, given as $Y(\bm{x}_i)=m(\bm{x}_i)+\varepsilon_i$, $\bm{x}_i\in D$, $i=1,\ldots,n$. We contribute by providing an asymptotic framework in validating the local polynomial estimation for spatial data that includes stationary random field components in the error.
%There is a contribution in our paper that we provide the asymptotic regime that validates our local polynomial estimation for spatial data including stationary random field components in the error.}

The idea of the dependencies of population models on sample sizes 
was proposed by \cite{Da97}, which is reviewed in \cite{Da12} with recent developments. The papers introduced locally stationary processes to tackle the difficulties caused by time-varying features in nonstationary time series. We apply the idea of modeling local stationary processes to our local polynomial estimation. To derive CLT under our setting, we need to satisfy the two conflicting necessities for asymptotic validations. A trend function must be on a fixed domain, while a stationary random field component needs to have an increasing domain. We employ the idea of local stationarity works to satisfy the conflicting necessities.
\end{remark}

We assume the following conditions on the mean function $m$, the variance function $\eta$, and $\{\varepsilon_{n,j}\}$:
\begin{assumption}\label{Ass:mean-func}
Let $U_{\bm{z}}$ be a neighborhood of $\bm{z}=(z_1,\dots,z_d)\in (-1/2,1/2)^d$. 
\begin{itemize}
\item[(i)] The mean function $m$ is $(p+1)$-times continuously partial differentiable on $U_{\bm{z}}$ and define $\partial_{j_1\dots j_L}m(\bm{z}):= \partial m(\bm{z})/\partial z_{j_1}\dots \partial z_{j_L}$, $1 \leq j_1,\dots, j_L \leq d$, $0 \leq L \leq p+1$. When $L=0$, we set $\partial_{j_1 \dots j_L}m(\bm{z}) = \partial_{j_0}m(\bm{z})= m(\bm{z})$. 
\item[(ii)] The function $\eta$ is continuous over $U_{\bm{z}}$ and $\eta(\bm{z})>0$. 
\item[(iii)] The random variables $\{\varepsilon_{i}\}_{i = 1}^{n}$ are i.i.d. with $E[\varepsilon_1] = 0$,  $E[\varepsilon_1^2] = 1$, $E[|\varepsilon_1|^{q_1}]<\infty$ for some integer $q_1 > 4$, and the function $\sigma_{\varepsilon}(\cdot)$ is continuous over $U_{\bm{z}}$ with $\sigma_{\varepsilon}(\bm{z})>0$. 
\end{itemize}
\end{assumption}

\subsection{Sampling design}\label{sec:sampling}

To account for irregularly spaced data, we consider a stochastic sampling design. First, we define the sampling region $R_{n}$. For $j=1,\dots, d$, let $\{A_{n,j}\}_{n \geq 1}$ be a sequence of positive numbers such that $A_{n,j} \to \infty$ as $n \to \infty$.  We consider the following set as the sampling region. 
\begin{align}\label{sampling-region}
R_{n} = \prod_{j=1}^{d}[-A_{n,j}/2, A_{n,j}/2]. 
\end{align}
Next, we introduce our (stochastic) sampling designs. Let $g(\bm{z}) = g(z_1,\dots,z_d)$ be a probability density function on $R_{0} = [-1/2,1/2]^{d}$, and let $\{\bm{X}_{n,i}\}_{i \geq 1}$ be a sequence of i.i.d. random vectors with probability density $A_n^{-1}g(\bm{x} /A_n) = A_n^{-1}g(x_1/A_{n,1},\dots, x_d/A_{n,d})$ where $A_n = \prod_{j=1}^{d}A_{n,j}$. We assume that the sampling sites $\bm{x}_{n,1},\hdots, \bm{x}_{n,n}$ are obtained from the realizations of random vectors $\bm{X}_{n,1},\hdots, \bm{X}_{n,n}$. To simplify the notation, we will write $\bm{x}_{n,i}$ and $\bm{X}_{n,i}$ as $\bm{x}_i=(x_{i,1},\dots, x_{i,d})'$ and $\bm{X}_{i}=(X_{i,1},\dots,X_{i,d})'$, respectively. 

We summarize conditions on the stochastic sampling design as follows:
\begin{assumption}\label{Ass:sample}
Recall that $U_{\bm{z}}$ is a neighborhood of $\bm{z} \in (-1/2,1/2)^d$. Let $g$ be a probability density function with support $R_0=[-1/2,1/2]^d$. 
\begin{itemize}
\item[(i)] $A_n/n \to \kappa \in [0,\infty)$ as $n \to \infty$, 
\item[(ii)] $\{\bm{X}_{i}=(X_{i,1},\dots,X_{i,d})'\}_{i = 1}^{n}$ is a sequence of i.i.d. random vectors with density $A_n^{-1}g(\cdot/A_n)$ and $g$ is continuous over $U_{\bm{z}}$ and $g(\bm{z})>0$.
\item[(iii)] $\{\bm{X}_i\}_{i=1}^{n}$, $\bm{e} = \{e(\bm{x}): \bm{x} \in \mathbb{R}^{d}\}$, and $\{\varepsilon_i\}_{i=1}^{n}$ are mutually independent. 
\end{itemize}
\end{assumption}

Condition (i) implies that our sampling design allows both the pure increasing domain case ($\lim_{n \to \infty}A_{n}/n = \kappa \in (0,\infty)$) and the mixed increasing domain case ($\lim_{n \to \infty}A_n/n = 0$). Condition (ii) implies that the sampling density can be nonuniformly distributed over the sampling region $R_n=\prod_{j=1}^{d}[-A_{n,j}/2, A_{n,j}/2]$. The definition (\ref{sampling-region}) is only for convenience, since it is possible to consider sampling regions of various shapes including non-standard shapes (e.g., ellipsoids, polyhedrons, and non-convex sets) by adjusting the support of the density $g$. 

%\textcolor{blue}{See the supplementary material for a discussion on an alternative asymptotic framework concerning our stochastic sampling design for irregularly spaced sampling sites.}

\begin{remark}
In \cite{LuTj14}, they show asymptotic normality of a kernel density estimator of a strictly stationary random field on $\mathbb{R}^2$ under the domain expanding and infill (DEI) asymptotics, which is a non-stochastic design for irregularly spaced observations. In the DEI asymptotics, it is assumed from the outset that the sampling sites are countably infinite on $\mathbb{R}^d$, whereas in the mixed increasing domain (MID) asymptotics, a finite number of points within a finite observation region $R_n$ are assumed to be obtained as sampling sites. We see the DEI asymptotics as an alternative that may possibly enable us to construct a consistent estimator for
the mean function $m(x_0), x_0\in\mathbb{R}^2$ without imposing the mean function dependent on $A$, i.e., in the more reasonable setting of $Y(x_i)=m(x_i)+e(x_i)+\varepsilon_i$, $i=1,\ldots,n$, $x_i\in\mathbb{R}^2$. However, we expect in this alternative case that the rate of convergence will be differently specified as $\sqrt{nh}$. We leave the rigorous proof to future studies.
\end{remark}

\subsection{Dependence structure}\label{sec:dependence}

We assume that random field $\bm{e}$ satisfies a mixing condition. First, we define the $\alpha$- and $\beta$-mixing coefficients for the random field $\bm{e}$. Let $\mathcal{F}_{\bm{e}}(T) = \sigma(\{e(\bm{x}): \bm{x} \in T\})$ be the $\sigma$-field generated by the variables $\{e(\bm{x}):\bm{x} \in T\}$, $T \subset \mathbb{R}^d$. For any two subsets $T_1$ and $T_2$ of $\mathbb{R}^d$, let 
$\bar{\alpha}(T_1,T_2) = \sup\{|P(A \cap B) - P(A)P(B)|: A \in \mathcal{F}_{\bm{e}}(T_1), B \in \mathcal{F}_{\bm{e}}(T_2)\}$, $\bar{\beta}(T_1,T_2) = \sup {1 \over 2}\sum_{j=1}^{J}\sum_{k=1}^{K}|P(A_{j}\cap B_{k}) - P(A_{j})P(B_{k})|$ where the supremum for $\bar{\beta}(T_1,T_2)$ is taken over all pairs of (finite) partitions $\{A_{1},\hdots,A_{J}\}$ and $\{B_{1},\hdots, B_{K}\}$ of $\mathbb{R}^{d}$ such that $A_{j} \in \mathcal{F}_{\bm{e}}(T_1)$ and $B_{k} \in \mathcal{F}_{\bm{e}}(T_2)$.
The $\alpha$- and $\beta$-mixing coefficients of the random field $\bm{e}$ are defined as $\alpha(a;b) = \sup\{\bar{\alpha}(T_1,T_2): d(T_1,T_2) \geq a, T_1,T_2 \in \mathcal{R}(b)\}$, $\beta(a;b) = \sup\{\bar{\beta}(T_1,T_2): d(T_1,T_2) \geq a, T_1,T_2 \in \mathcal{R}(b)\}$ where $a,b>0$, $d(T_1,T_2) = \inf\{|\bm{x} - \bm{y}| : \bm{x} \in T_1, \bm{y} \in T_2\}$, and $\mathcal{R}(b)$ is the collection of all the finite disjoint unions of cubes in $\mathbb{R}^d$ with a total volume not exceeding $b$. Moreover, we assume that there exist non-increasing functions $\alpha_1$ and $\beta_1$ with $\alpha_1(a), \beta_1(a) \to 0$ as $a \to \infty$ and non-decreasing functions $\varpi_1$ and $\varpi_2$ (that may be unbounded) such that $\alpha(a;b) \leq \alpha_1(a)\varpi_1(b)$, $\beta(a;b) \leq \beta_1(a)\varpi_2(b)$. See the supplementary material for a discussion on the $\alpha$- and $\beta$-mixing coefficients. 

For the asymptotic normality of LP estimators, we assume the following conditions for the random field $\bm{e}$: 
\begin{assumption}\label{Ass:RF}
For $j = 1,\dots, d$, let $\{A_{n1,j}\}_{n \geq 1}$ and $\{A_{n2,j}\}_{n \geq 1}$ be sequences of positive numbers such that $\min\left\{A_{n2,j},  {A_{n1,j} \over A_{n2,j}}\right\} \to \infty$ as $n \to \infty$. 
\begin{itemize}
\item[(i)] The random field $\bm{e}$ is stationary and $E[|\bm{e}(\bm{0})|^{q_2}]<\infty$ for some integer $q_2 > 4$. 
\item[(ii)] Define $\sigma_{\bm{e}}(\bm{x}) = E[e(\bm{0})e(\bm{x})]$. Assume that $\sigma_{\bm{e}}(\bm{0})=1$ and $\int_{\mathbb{R}^d} |\sigma_{\bm{e}}(\bm{v})|d\bm{v}<\infty$.
\item[(iii)] The random field $\bm{e}$ is $\alpha$-mixing with mixing coefficients $\alpha(a;b)$ such that as $n \to \infty$, 
\begin{align*}
A_n^{(1)} \left(\alpha_1^{1-2/q}(\underline{A}_{n2}) + \sum_{k=\underline{A}_{n1}}^{\infty}k^{d-1}\alpha_1^{1-2/q}(k)\right)\varpi_1^{1-2/q}(A_n^{(1)}) \to 0, 
\end{align*}
where $q = \min\{q_1,q_2\}$, $A_n^{(1)} = \prod_{j=1}^{d}A_{n1,j}$, $\underline{A}_{n1}=\min_{1 \leq j \leq d}A_{n1,j}$, $\underline{A}_{n2}=\min_{1 \leq j \leq d}A_{n2,j}$.
\end{itemize}
\end{assumption}

The sequences $\{A_{n1,j}\}$ and $\{A_{n2,j}\}$ will be used in the large-block-small-block argument, which is commonly used in proving CLTs for sums of mixing random variables. Specifically, $A_{n1,j}$ corresponds to the side length of large blocks, while $A_{n2,j}$ corresponds to the side length of small blocks. In the supplementary material, we provide examples of random fields that satisfy Assumptions \ref{Ass:RF} and \ref{Ass:band} below. In particular, a wide class of L\'evy-driven moving average (MA) random fields that includes continuous autoregressive and moving average (CARMA) random fields (cf. \cite{BrMa17}) satisfies our assumptions (see the supplementary material for details). 

\section{Local polynomial regression of order $p$}\label{sec:LPR}

In this section, we introduce local polynomial (LP) estimators of order $p \geq 1$ for the estimation of the mean function $m$ of the model (\ref{eq:model}) and their partial derivatives.

Define $D = [\![\{(j_1,\dots ,j_L): 1 \leq j_1 \leq \dots \leq j_L \leq d, 0 \leq L \leq p\}]\!]$, $\bar{D} =[\![\{(j_1,\dots,j_{p+1}): 1 \leq j_1 \leq \dots \leq j_{p+1} \leq d\}]\!]$,
$(s_{j_1\dots j_L1},\dots, s_{j_1\dots j_Ld}) \in \mathbb{Z}_{\geq 0}^d$ such that $s_{j_1\dots j_Lk} = [\![\{j_\ell : j_\ell = k, 1\leq \ell \leq L\}]\!]$, and define 
$\bm{s}_{j_1\dots j_L}!=\prod_{k=1}^ds_{j_1\dots j_Lk}!$.
When $L = 0$, we set $(j_1,\dots,j_L) = j_0 = 0$ and $\bm{s}_{j_1\dots j_L}! = 1$. Note that $\sum_{k=1}^{d}s_{j_1\dots j_L k} = L$. Further, for $p \geq 1$ and $\bm{z} \in [-1/2,1/2]^d$, define 
\begin{align*}
\bm{M}(\bm{z}) &:= \left(m(\bm{z}),\partial_1 m(\bm{z}),\dots, \partial_d m(\bm{z}), {\partial_{11}m(\bm{z}) \over 2!}, {\partial_{12}m(\bm{z}) \over 1!1!}, \dots, {\partial_{dd}m(\bm{z}) \over 2!}, \right. \\ 
&\left. \quad \quad \dots, {\partial_{1\dots1}m(\bm{z}) \over p!}, {\partial_{1\dots 2}m(\bm{z}) \over (p-1)!1! }\dots, {\partial_{d\dots d}m(\bm{z}) \over p!} \right)'\\
&= \left({1 \over \bm{s}_{j_1\dots j_L}!}\partial_{j_1,\dots j_L}m(\bm{z})\right)'_{1\leq j_1 \leq \dots \leq j_L\leq d, 0 \leq L \leq p} \in \mathbb{R}^{D}.
\end{align*} 
We define the local polynomial regression estimator of order $p$ for $\bm{M}(\bm{z})$ as a solution of the following problem:  
\begin{align}\label{LL-minimize}
\hat{\bm{\beta}}(\bm{z}) &:= \argmin_{\bm{\beta} \in \mathbb{R}^D}\sum_{i=1}^{n}\! \left(\!Y(\bm{X}_i) \!-\! \sum_{L=0}^{p}\sum_{1 \leq j_1 \leq \dots \leq j_L \leq d}\!\!\!\!\!\!\!\!\!\beta_{j_1\dots j_L}\! \prod_{\ell=1}^{L} \!\! \left(\!{X_{i,j_\ell} \!-\! A_{n,j_\ell}z_{j_\ell} \over A_{n,j_\ell}}\! \right)\!\! \right)^2\!\!\! K_{Ah}\!\left(\bm{X}_i \!-\! A_n\bm{z}\right)\\
&= (\hat{\beta}_0(\bm{z}), \hat{\beta}_1(\bm{z}),\dots, \hat{\beta}_d(\bm{z}),\hat{\beta}_{11}(\bm{z}),\dots, \hat{\beta}_{dd}(\bm{z}),\dots,\hat{\beta}_{1\dots1}(\bm{z}),\dots,\hat{\beta}_{d\dots d}(\bm{z}))' \nonumber \\
&= (\hat{\beta}_{j_1\dots j_L}(\bm{z}))'_{1\leq j_1\leq \dots \leq j_L\leq d, 0 \leq L \leq p}, \nonumber
\end{align}
where $\bm{\beta} = (\beta_{j_1\dots j_L})'_{1 \leq j_1 \leq \dots \leq j_L \leq d, 0 \leq L \leq p}$, $K:\mathbb{R}^d \to \mathbb{R}$ is a kernel function, and each $h_j$ is a sequence of positive constants (bandwidths) such that $h_j \to 0$ as $n \to \infty$, and where  
\[
K_{Ah}(\bm{X}_i - A_n\bm{z}) = K\left({X_{i,1} - A_{n,1}z_1 \over A_{n,1}h_1}, \dots, {X_{i,d} - A_{n,d}z_d \over A_{n,d}h_d}\right)
\]
and $\sum_{1 \leq j_1 \leq \dots \leq j_L \leq d}\beta_{j_1\dots j_L}\prod_{\ell=1}^{L}(X_{i,j_\ell} - A_{n,j_\ell}z_{j_\ell})/A_{n,j_\ell} = \beta_0$ when $L=0$.

To compute LP estimators, we introduce some notations: $\bm{Y}:= (Y(\bm{X}_1),\dots,Y(\bm{X}_n))'$, 
\begin{align*}
\bm{X} &:= (\tilde{\bm{X}}_1,\dots,\tilde{\bm{X}}_n) \!=\! 
\left(
\begin{array}{ccc}
1 & \dots & 1  \\
{\left(\bm{X}_{1} - A_n\bm{z}\right)_1 \over A_n} & \dots & {\left(\bm{X}_{n} - A_n\bm{z} \right)_1 \over A_n}\\
\vdots & \dots & \vdots \\
{\left(\bm{X}_{1} - A_n\bm{z}\right)_p \over A_n} & \dots & {\left(\bm{X}_{n} - A_n\bm{z}\right)_p \over A_n}
\end{array}
\right) \!=\! 
\left(\!
\begin{array}{ccc}
1 & \dots & 1\\
\check{(\bm{X}_1-A_n \bm{z})} &\dots & \check{(\bm{X}_n - A_n\bm{Z})}
\end{array}
\! \right),\\
\bm{W} &:= \diag\left(K_{Ah}\left(\bm{X}_1 - A_n\bm{z} \right),\dots,K_{Ah}\left(\bm{X}_n - A_n\bm{z}\right)\right),
\end{align*}
where 
\begin{align*}
{\left(\bm{X}_i - A_n\bm{z} \right)_L \over A_n} &= \left(\prod_{\ell = 1}^{L}\left({X_{i,j_{\ell}} - A_{n,j_\ell}z_{j_{\ell}} \over A_{n,j_\ell}}\right)\right)'_{1 \leq j_1 \leq \dots \leq j_L \leq d}.
\end{align*}
The minimization problem (\ref{LL-minimize}) can be rewritten as $\hat{\bm{\beta}}(\bm{z}) = \argmin_{\bm{\beta} \in \mathbb{R}^D}(\bm{Y} - \bm{X}'\bm{\beta})'\bm{W}(\bm{Y} - \bm{X}'\bm{\beta}) =: \argmin_{\bm{\beta} \in \mathbb{R}^D}Q_n(\bm{\beta})$.
Then the first order condition of the problem (\ref{LL-minimize}) is given by ${\partial \over \partial \bm{\beta}}Q_n(\bm{\beta}) = -2\bm{X} \bm{W} \bm{Y} + 2\bm{X} \bm{W} \bm{X}'\bm{\beta} = 0$.
Hence the solution of the problem (\ref{LL-minimize}) is given by
\begin{align*}
\hat{\bm{\beta}}(\bm{z}) &= (\bm{X} \bm{W} \bm{X}')^{-1}\bm{X} \bm{W} \bm{Y}\\
&= \left[\sum_{i=1}^{n}K_{Ah}\left(\bm{X}_i - A_n\bm{z} \right)\tilde{\bm{X}}_i \tilde{\bm{X}}'_i\right]^{-1}\sum_{i=1}^{n}K_{Ah}\left(\bm{X}_i - A_n\bm{z}\right)\tilde{\bm{X}}_i Y(\bm{X}_i).
\end{align*}

We assume the following conditions on the kernel function $K$:
\begin{assumption}\label{Ass:kernel}
Let $K :\mathbb{R}^{d} \to \mathbb{R}$ be a kernel function such that
\begin{itemize}
\item[(i)] $\int K(\bm{z})d\bm{z} = 1$.
\item[(ii)] The kernel function $K$ is bounded and supported on $S_K \subset [-1/2,1/2]^d$ with $U_{\bm{z}} \subset S_K$.
\item[(iii)] Define $\kappa_0^{(r)}:= \int K^r(\bm{z})d\bm{z}$, $\kappa_{j_1\dots j_M}^{(r)}:= \int \prod_{\ell=1}^{M}z_{j_\ell}K^r(\bm{z})d\bm{z}$, and 
\[
\check{\bm{z}} := (1, (\bm{z})'_1,\dots, (\bm{z})'_p)',\ (\bm{z})_{L} = \left(\prod_{\ell=1}^{L}z_{j_\ell}\right)'_{1 \leq j_1 \leq \dots \leq j_L \leq d},\ 1 \leq L \leq p. 
\]
The matrix $S = \int \left(
\begin{array}{c}
1 \\
\check{\bm{z}}
\end{array}
\right)
(1\ \check{\bm{z}}')K(\bm{z})d\bm{z}$ is non-singular. 
\end{itemize}
\end{assumption}

\section{Main results}\label{sec:main}

In this section, we discuss asymptotic properties of LP estimators defined in Section \ref{sec:LPR}. In particular, we establish the asymptotic normality of LP estimator (Section \ref{sec:AN-LPR}). In the supplementary material, we discuss a two-sample test for the mean functions and their partial derivatives as an application of our main results.  

\subsection{Asymptotic normality of local polynomial estimators}\label{sec:AN-LPR}

We assume the following conditions for the sample size $n$, bandwidths $h_j$, constants $A_{n,j}$, $A_{n1,j}$, and $A_{n2,j}$, and mixing coefficients $\alpha(a;b)$: 
\noindent
\begin{assumption}\label{Ass:band}
Recall $q = \min\{q_1,q_2\}$, $A_n^{(1)}=\prod_{j=1}^{d}A_{n1,j}$, $\underline{A}_{n1}=\min_{1 \leq j \leq d}A_{n1,j}$. Define $\overline{A}_{n1}=\max_{1 \leq j \leq d}A_{n1,j}$, $\overline{A}_{n2}=\max_{1 \leq j \leq d}A_{n2,j}$, and $\overline{A_{n}h} =\max_{1 \leq j \leq d}A_{n,j}h_j$. As $n \to \infty$, 
\begin{itemize}
\item[(i)] $h_j \to 0$, ${A_{n,j}h_j \over A_{n1,j}} \to \infty$ for $1 \leq j \leq d$.
%\item[(ii)] $h_j \sim h_k$, $A_{n,j} \sim A_{n,k}$, $A_{n1,j} \sim A_{n1,k}$, $A_{n2,j} \sim A_{n2,k}$ for $j \neq k$.
\item[(ii)] $nh_1 \dots h_d \to \infty$. 
\item[(iii)] $A_n h_1 \dots h_d \times h_{j_1}^2 \dots h_{j_p}^2 \to \infty$ for $1 \leq j_1 \leq \dots \leq j_{p} \leq d$. 
\item[(iv)] $A_n h_1 \dots h_d \times h_{j_1}^2 \dots h_{j_p}^2h_{j_{p+1}}^2 \to c_{j_1\dots j_{p+1}} \in [0,\infty)$ for $1 \leq j_1 \leq \dots \leq j_{p+1} \leq d$. 
\item[(v)] 
\begin{align}
&\left({A_n h_1 \dots h_d \over A_n^{(1)}}\right)\alpha_1(\underline{A}_{n2})\varpi_1(A_n h_1 \dots h_d) \to 0, \label{Cond(v)1}\\
&\left({A_n^{(1)} \over A_n h_1 \dots h_d}\right)\sum_{k=1}^{\overline{A}_{n1}}k^{2d-1}\alpha_1^{1-4/q}(k) \to 0, \label{Cond(v)2}\\ 
&\left\{\left({\overline{A}_{n1} \over \underline{A}_{n1}}\right)^d\left({\overline{A}_{n2} \over \overline{A}_{n1}}\right) + \left({A_n^{(1)} \over \underline{A}_{n1}^d}\right)\left({\left(\overline{A_n h}\right)^d \over A_n h_1 \dots h_d}\right)\left({\overline{A}_{n1} \over \overline{A_n h}}\right)\right\}\sum_{k=1}^{\overline{A}_{n1}}k^{d-1}\alpha_1^{1-2/q}(k) \to 0.\label{Cond(v)3}
\end{align}
\end{itemize} 
\end{assumption}

We need Condition (ii) to compute the asymptotic variances of LP estimators. Conditions (iii) and (iv) are concerned with the rates of convergence of variance and bias terms of LP estimators, respectively. Condition (v) is concerned with the large-block-small-block argument to show the asymptotic normality of LP estimators. Indeed, we use the condition (\ref{Cond(v)1}) to approximate a weighted sum of spatially dependent data of the form
\[\sum_{i=1}^{n}K_{Ah}\left(\bm{X}_i\right)H^{-1}
\left(
\begin{array}{c}
1 \\
\check{\bm{X}}_i
\end{array}
\right)(e_{n,i}+\varepsilon_{n,i})
\]
by a sum of independent large blocks where 
\[
H := \diag(1,h_1,\dots, h_d,h_1^2, h_1h_2, \dots,h_d^2,\dots, h_1^p, h_1^{p-1}h_2,\dots, h_d^p) \in \mathbb{R}^{D \times D}.
\]
The condition (\ref{Cond(v)2}) is used to show asymptotic normality of the sum of independent large blocks. The condition (\ref{Cond(v)3}) is used to show the asymptotic negligibility of a sum of small blocks. See the proof of Theorem \ref{thm: LP-CLT} for detailed definitions of large and small blocks. 

Throughout Sections \ref{sec:AN-LPR} and \ref{sec:AV-LPR}, we set $\bm{z} = \bm{0}$ without loss of generality. Extending the results in this section to the case $\bm{z} \in (-1/2,1/2)^{d}$ is straightforward. 

\begin{theorem}[Asymptotic normality of local polynomial estimators]\label{thm: LP-CLT}
Suppose Assumptions \ref{Ass:mean-func}, \ref{Ass:sample}, \ref{Ass:RF}, \ref{Ass:kernel}, and \ref{Ass:band} hold. 
Then, as $n \to \infty$, the following result holds:
\begin{align*}
&\sqrt{A_nh_1 \dots h_d}\left(H\left(\hat{\bm{\beta}}(\bm{0}) - \bm{M}(\bm{0})\right) - S^{-1}B^{(d,p)}\bm{M}_n^{(d,p)}(\bm{0})\right)\\ 
&\quad \stackrel{d}{\to} N\left( \left(
\begin{array}{c}
0 \\
\vdots \\
0
\end{array}
\right), \left\{{\kappa(\eta^2(\bm{0}) + \sigma_\varepsilon^2(\bm{0})) \over g(\bm{0})}+\eta^2(\bm{0})\int \sigma_{\bm{e}}(\bm{v})d\bm{v}\right\}S^{-1}\mathcal{K}S^{-1}\right), 
\end{align*}
where 
\begin{align*}
B^{(d,p)} &= \int \left(
\begin{array}{c}
1 \\
\check{\bm{z}}
\end{array}
\right)(\bm{z})'_{p+1}K(\bm{z})d\bm{z} \in \mathbb{R}^{D \times \bar{D}},\ \mathcal{K}= \int \left(
\begin{array}{c}
1 \\
\check{\bm{z}}
\end{array}
\right)
(1\ \check{\bm{z}}')K^2(\bm{z})d\bm{z} \in \mathbb{R}^{D \times D},\\
\bm{M}_n^{(d,p)}(\bm{z}) &= \left({\partial_{j_1\dots j_{p+1}}m(\bm{z}) \over \bm{s}_{j_1 \dots j_{p+1}}!}\prod_{\ell=1}^{p+1}h_{j_{\ell}}\right)'_{1 \leq j_1 \leq \dots \leq j_{p+1}\leq d}\\
&= \left({\partial_{1\dots 1}m(\bm{z}) \over (p+1)!}h_1^{p+1},{\partial_{1\dots 2}m(\bm{z}) \over p!}h_1^{p}h_2,\dots, {\partial_{d\dots d}m(\bm{z}) \over (p+1)!}h_d^{p}\right)' \in \mathbb{R}^{\bar{D}}. 
\end{align*} 
\end{theorem}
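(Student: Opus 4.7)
The plan is to follow a Taylor-plus-CLT strategy tailored to the irregular spatial design. Writing $\bm X_i^{\ast}:=H^{-1}\tilde{\bm X}_i$ and normalizing the Gram matrix as $\bm S_n:=(nh_1\cdots h_d)^{-1}\sum_i K_{Ah}(\bm X_i)\bm X_i^{\ast}\bm X_i^{\ast\prime}$, I expand $m(\bm X_i/A_n)$ around $\bm 0$ up to order $p+1$ to get $m(\bm X_i/A_n)=\tilde{\bm X}_i'\bm M(\bm 0)+R_i$, where $R_i$ is the Taylor remainder collecting $(p+1)$-th order monomials in $X_{i,j_\ell}/A_{n,j_\ell}$. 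Substituting into the closed form $\hat{\bm\beta}(\bm 0)=(\bm X\bm W\bm X')^{-1}\bm X\bm W\bm Y$ yields the decomposition
\[
H\bigl(\hat{\bm\beta}(\bm 0)-\bm M(\bm 0)\bigr)\;=\;\bm S_n^{-1}\bm B_n + \bm S_n^{-1}\bm V_n,
\]
where $\bm B_n=(nh_1\cdots h_d)^{-1}\sum_i K_{Ah}(\bm X_i)\bm X_i^{\ast}R_i$ is the bias and $\bm V_n=(nh_1\cdots h_d)^{-1}\sum_i K_{Ah}(\bm X_i)\bm X_i^{\ast}(e_{n,i}+\varepsilon_{n,i})$ is the stochastic part. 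The theorem then reduces to three claims: (a) $\bm S_n\to g(\bm 0)S$ in probability, (b) $\sqrt{A_nh_1\cdots h_d}\bigl(\bm S_n^{-1}\bm B_n-S^{-1}B^{(d,p)}\bm M_n^{(d,p)}(\bm 0)\bigr)=o_p(1)$, and (c) a CLT $\sqrt{A_nh_1\cdots h_d}\,\bm V_n\stackrel{d}{\to}N(\bm 0,\Sigma_\ast)$ with $\Sigma_\ast=\bigl\{\kappa g(\bm 0)(\eta^2(\bm 0)+\sigma_\varepsilon^2(\bm 0))+g^2(\bm 0)\eta^2(\bm 0)\int\sigma_{\bm e}(\bm v)\,d\bm v\bigr\}\mathcal K$.

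Steps (a) and (b) are the easy pieces. For (a), the i.i.d.\ structure of $\{\bm X_i\}$ combined with the change of variable $\bm v=\bm X_i/(A_n\bm h)$ gives $E\bm S_n=g(\bm 0)S+o(1)$ via the continuity of $g$ at $\bm 0$ and Assumption~\ref{Ass:kernel}, and Chebyshev kills the variance using Condition~(ii). For (b), on the support of $K_{Ah}(\bm X_i)$ the intermediate point in the Taylor remainder lies within $O(\max_j h_j)$ of $\bm 0$, so the continuity of the $(p+1)$-th partial derivatives of $m$ and of $g$ gives $\bm B_n=g(\bm 0)B^{(d,p)}\bm M_n^{(d,p)}(\bm 0)+o_p\bigl(\prod_{\ell=1}^{p+1}h_{j_\ell}\bigr)$, and Condition~(iv) forces the remainder to be of order $O((A_nh_1\cdots h_d)^{-1/2})$, which vanishes after the $\sqrt{A_nh_1\cdots h_d}$ scaling.

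The substantive work is (c). By Cram\'er--Wold I reduce to a scalar CLT for $\bm c'\bm V_n$, $\bm c\in\mathbb R^D$, and split $\bm V_n=\bm V_n^{(\varepsilon)}+\bm V_n^{(e)}$. For the i.i.d.\ noise part $\bm V_n^{(\varepsilon)}$, conditioning on $\{\bm X_i\}$ makes it a triangular array of independent summands; a conditional Lindeberg CLT using $E|\varepsilon_1|^{q_1}<\infty$, $q_1>4$, yields asymptotic normality with conditional variance converging, via a sampling-site LLN, to the $\kappa\sigma_\varepsilon^2(\bm 0)g(\bm 0)$ contribution. For $\bm V_n^{(e)}=(nh_1\cdots h_d)^{-1}\sum_i K_{Ah}(\bm X_i)\bm X_i^{\ast}\eta(\bm X_i/A_n)e(\bm X_i)$ I carry out the large-block--small-block construction on $R_n$: partition $R_n$ into rectangular large blocks $\mathcal B_k$ of side lengths $A_{n1,j}$ separated by corridors $\mathcal B_k^{\ast}$ of widths $A_{n2,j}$, and define $T_k$ (resp.\ $T_k^{\ast}$) as $\bm c'\bm V_n^{(e)}$ restricted to indices $i$ with $\bm X_i\in\mathcal B_k$ (resp.\ $\bm X_i\in\mathcal B_k^{\ast}$). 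The characteristic-function bound associated with $\alpha$-mixing,
\[
\Bigl|E\prod_k e^{\mathrm{i}tT_k}-\prod_k Ee^{\mathrm{i}tT_k}\Bigr|\;\lesssim\;\frac{A_nh_1\cdots h_d}{A_n^{(1)}}\,\alpha_1(\underline A_{n2})\,\varpi_1(A_nh_1\cdots h_d),
\]
vanishes by (\ref{Cond(v)1}), reducing the problem to a sum of independent block surrogates; (\ref{Cond(v)3}) kills the corridor residual $\sum_k T_k^{\ast}$ in $L^2$; and (\ref{Cond(v)2}) together with $E|e(\bm 0)|^{q_2}<\infty$, $q_2>4$, supplies the Lyapunov condition for the independent block sum. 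The variance computation splits into diagonal ($i=j$) and off-diagonal terms: the diagonal yields $\kappa g(\bm 0)\eta^2(\bm 0)\mathcal K$, and the off-diagonal, after the substitution $\bm w=A_n\bm h(\bm u_1-\bm u_2)$ and dominated convergence under $\int|\sigma_{\bm e}|<\infty$, yields $g^2(\bm 0)\eta^2(\bm 0)\int\sigma_{\bm e}(\bm v)\,d\bm v\cdot\mathcal K$, reproducing $\Sigma_\ast$.

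The main obstacle is that each block sum $T_k$ involves a \emph{random} number of sampling points, so the blocking argument has to be combined with a careful conditioning step: I condition on $\{\bm X_i\}$ when invoking the $\alpha$-mixing of $\bm e$, apply the characteristic-function approximation pathwise, and pass to the unconditional statement by dominated convergence using the mutual independence in Assumption~\ref{Ass:sample}(iii); the Lyapunov condition is then obtained by verifying uniform $L^{q/2}$ bounds on the per-block counts $\#\{i:\bm X_i\in\mathcal B_k\}$, which inherit binomial tail behaviour from the i.i.d.\ sampling. Once (a)--(c) are in place, Slutsky together with the identity $(g(\bm 0)S)^{-1}\Sigma_\ast(g(\bm 0)S)^{-1}=\bigl\{\kappa(\eta^2(\bm 0)+\sigma_\varepsilon^2(\bm 0))/g(\bm 0)+\eta^2(\bm 0)\int\sigma_{\bm e}(\bm v)\,d\bm v\bigr\}S^{-1}\mathcal K S^{-1}$ converts the inner CLT into the sandwich form of the theorem.
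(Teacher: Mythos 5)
Your proposal follows essentially the same route as the paper: the same Taylor/Gram-matrix decomposition, the same three reductions (consistency of $\bm{S}_n$ to $g(\bm 0)S$, bias identification via continuity of the $(p+1)$-th derivatives and Assumption \ref{Ass:band}(iv), and a Cram\'er--Wold CLT for the stochastic term), and the same large-block/small-block machinery in which (\ref{Cond(v)1}) drives the characteristic-function approximation by independent blocks, (\ref{Cond(v)3}) kills the corridors, (\ref{Cond(v)2}) supplies the Lyapunov (fourth-moment) condition, and the variance splits into a diagonal $\kappa$-term and an off-diagonal $\int\sigma_{\bm e}$-term by change of variables and dominated convergence.

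The one place you depart from the paper is the split $\bm V_n=\bm V_n^{(\varepsilon)}+\bm V_n^{(e)}$ with two separate CLTs. The paper instead keeps $e_{n,i}+\varepsilon_{n,i}$ together inside each spatial block, so a single blocked Lyapunov argument delivers the limit directly. With your split you still owe a recombination step: the two pieces are independent only conditionally on $\{\bm X_i\}$, and both depend on $\{\bm X_i\}$, so adding two marginal CLTs does not by itself give the CLT for the sum. This is fixable (e.g., show the conditional variance of $\bm V_n^{(\varepsilon)}$ given $(\{\bm X_i\},\bm e)$ converges in probability to a constant, pass the conditional characteristic function of the $\varepsilon$-part inside the expectation, and then apply your blocking argument to the $e$-part), but as written it is a gap; the paper's combined-sum blocking avoids it entirely. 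A second, smaller omission: you need not only corridor negligibility but also that the sum of the independent-block variances matches the total variance (the paper's condition (\ref{Vn-var-diff}), proved in its Step 2-4 using the inter-block mixing bound); this should be stated explicitly rather than absorbed into ``reducing to independent surrogates.''
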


Theorem \ref{thm: LP-CLT} differs from the asymptotic normality of LP estimators under i.i.d. observations in several points. First, the convergence rates of LP estimators depends not on the sample size $n$ explicitly but on the volume of the sampling region $A_n$. Second, the asymptotic variance is represented as a sum of two components $\{\kappa(\eta^2(\bm{0}) + \sigma_{\varepsilon}^2(\bm{0}))\}S^{-1}\mathcal{K}S^{-1}/g(\bm{0})$ and $\eta^2(\bm{0})\left(\int \sigma_{\bm{e}}(\bm{v})d\bm{v}\right)S^{-1}\mathcal{K}S^{-1}$. When the sampling design satisfies the mixed increasing domain asymptotics, that is, $\kappa = 0$, then the asymptotic variance depends only on the second term, which represents the effect of the spatial dependence, and does not includes $\sigma_{\varepsilon}^2(\bm{0})$, the effect of the measurement error $\{\varepsilon_{n,j}\}$. This is completely different from i.i.d. case. We also note that the form of the asymptotic variance in Theorem \ref{thm: LP-CLT} is different from that of Theorem 4 in \cite{Ma96b} who investigates asymptotic properties of LP estimators for equidistant time series. Indeed, in his result, the variance term that corresponds to the second term of the asymptotic variance in our result does not appear. When the sampling design satisfies the pure increasing domain asymptotics, that is, $\kappa \in (0,\infty)$, then the asymptotic variance depends on both first and second terms. In this case, the asymptotic variance includes the effect of the sampling design $1/g(\bm{0})$, which implies that the more likely the sampling sites are distributed around $\bm{0}$, the more accurate the estimation of $M(\bm{0})$. Moreover, if $\eta(\cdot) \equiv 0$, then the asymptotic variance coincides with that of i.i.d. case.

\begin{remark}[General form of the mean squared error of $\partial_{j_1\dots j_L}\widehat{m}(\bm{0})$]
Define
\begin{align*}
\bm{b}_n^{(d,p)}(\bm{x}) &:= B^{(d,p)}\bm{M}_n^{(d,p)}(\bm{x})\\ 
&= \left(b_{n,0}(\bm{x}), b_{n,1}(\bm{x}),\dots,b_{n,d}(\bm{x}), \right. \\
&\left. \quad \quad \quad b_{n,11}(\bm{x}),b_{n,12}(\bm{x}),\dots,b_{n,dd}(\bm{x}), \dots, b_{n,1\dots,1}(\bm{x}), b_{n,1\dots 2}(\bm{x}),\dots,b_{n,d \dots d}(\bm{x})\right)'
\end{align*}
and let $e_{j_1\dots j_L}= (0,\dots,0,1,0,\dots,0)'$ be a $D$-dimensional vector such that $e_{j_1\dots j_L}'\bm{b}_n^{(d,p)}(\bm{x}) = b_{j_1\dots j_L}(\bm{x})$. 
Theorem \ref{thm: LP-CLT} yields that 
\begin{align*}
b_{n,j_1,\dots,j_L}(\bm{0}) &= \sum_{1 \leq j_{1,1} \leq \dots \leq j_{1,p+1}\leq d}{\partial_{j_{1,1}\dots j_{1,p+1}}m(\bm{0}) \over \bm{s}_{j_{1,1}\dots j_{1,p+1}}!}\prod_{\ell_1=1}^{p+1}h_{j_{1,\ell_1}}\kappa_{j_1\dots j_L j_{1,1} \dots j_{1,p+1}}^{(1)},\\
\end{align*}
for $1 \leq j_1 \leq \dots \leq  j_L \leq d$, $0 \leq L \leq p$, and the mean squared error (MSE) of LP estimator $\partial_{j_1\dots j_L}\widehat{m}(\bm{0})$ is given as follows: 
\begin{align}\label{MSE-mx}
&\text{MSE}(\partial_{j_1\dots j_L}\widehat{m}(\bm{0})) = E\left[\left(\partial_{j_1\dots j_L}m(\bm{0}) - \partial_{j_1\dots j_L}\widehat{m}(\bm{0})\right)^2\right] \nonumber \\
&= \left\{\bm{s}_{j_1\dots j_L}!{(S^{-1}e_{j_1\dots j_L})'B^{(d,p)}\bm{M}_n^{(d,p)}(\bm{0}) \over \prod_{\ell=1}^{L}h_{j_\ell}}\right\}^2 \nonumber \\ 
&\quad + \left({\kappa(\eta^2(\bm{0}) + \sigma_{\varepsilon}^2(\bm{0})) \over g(\bm{0})} + \eta^2(\bm{0})\int \sigma_{\bm{e}}(\bm{v})d\bm{v}\right)\left(\bm{s}_{j_1\dots j_L}!\right)^2{e'_{j_1\dots j_L}S^{-1}\mathcal{K}S^{-1}e_{j_1\dots j_L} \over A_nh_1\dots h_d \times \left(\prod_{\ell=1}^{L}h_{j_\ell}\right)^2}.
\end{align}
\end{remark}

\section{Uniform convergence rates of local polynomial estimators}\label{sec:LP-unif}

In this section, we derive the uniform convergence rates of LP estimators for the mean function of the model (\ref{eq:model}) and their partial derivatives. We note that these results can be derived as special cases of the results on the uniform convergence rates of more general kernel estimators provided in the supplementary material. Moreover, we construct estimators of the asymptotic variances of LP estimators (Section \ref{sec:AV-LPR}). We assume the following conditions on the mean function $m$, the variance function $\eta$, and $\{\varepsilon_{n,i}\}$:
\begin{assumption}\label{Ass:mean-func-unif}
\begin{itemize}
Recall $R_{0} = [-1/2,1/2]^d$. 
\item[(i)] The mean function $m$ is $(p+1)$-times continuously partial differentiable on $R_{0}$ and define $\partial_{j_1\dots j_L}m(\bm{z}):= \partial m(\bm{z})/\partial z_{j_1}\dots \partial z_{j_L}$, $1 \leq j_1,\dots, j_L \leq d$, $0 \leq L \leq p+1$. When $L=0$, we set $\partial_{j_1 \dots j_L}m(\bm{z}) = \partial_{j_0}m(\bm{z})= m(\bm{z})$. 
\item[(ii)] The function $\eta$ is continuous over $R_{0}$ and $\inf_{\bm{z} \in R_{0}}\eta(\bm{z})>0$. 
\item[(iii)] The sequence of random variables $\{\varepsilon_{i}\}_{i = 1}^{n}$ are i.i.d. with $E[\varepsilon_1] = 0$,  $E[\varepsilon_1^2] = 1$, $E[|\varepsilon_1|^{q_1}]<\infty$ for some integer $q_1 > 4$ and the function $\sigma_{\varepsilon}(\cdot)$ is continuous over $R_{0}$ and $\inf_{\bm{z} \in R_{0}}\sigma_{\varepsilon}(\bm{z})>0$. 
\end{itemize}
\end{assumption}

For the sampling sites $\{\bm{X}_i\}_{i=1}^{n}$, we assume the following conditions: 
\begin{assumption}\label{Ass:sample-unif}
Let $g$ be a probability density function with support $R_0=[-1/2,1/2]^d$. 
\begin{itemize}
\item[(i)] $A_n/n \to \kappa \in [0,\infty)$ as $n \to \infty$, 
\item[(ii)] $\{\bm{X}_{i}=(X_{i,1},\dots,X_{i,d})'\}_{i = 1}^{n}$ is a sequence of i.i.d. random vectors with density $A_n^{-1}g(\cdot/A_n)$ and $g$ is continuous and positive on $R_0$. 
\item[(iii)] $\{\bm{X}_i\}_{i=1}^{n}$, $\bm{e}=\{e(\bm{x}): \bm{x} \in \mathbb{R}^d\}$, and $\{\varepsilon_i\}_{i=1}^{n}$ are mutually independent. 
\end{itemize}
\end{assumption}

We also assume the following conditions on the bandwidth $h_j$ and the random field $\bm{e}=\{e(\bm{x}): \bm{x} \in \mathbb{R}^d\}$:  
\begin{assumption}\label{Ass:RF-unif}
For $j = 1,\dots, d$, let $\{A_{n1,j}\}_{n \geq 1}$, $\{A_{n2,j}\}_{n \geq 1}$ be sequence of positive numbers.  
\begin{itemize}
\item[(i)] The random field $\bm{e}$ is stationary and $E[|e(\bm{0})|^{q_2}]<\infty$ for some integer $q_2 > 4$. 
\item[(ii)] Define $\sigma_{\bm{e}}(\bm{x}) = E[e(\bm{0})e(\bm{x})]$. Assume that  $\int_{\mathbb{R}^d} |\sigma_{\bm{e}}(\bm{v})|d\bm{v}<\infty$.
\item[(iii)] $\min\left\{A_{n2,j},  {A_{n1,j} \over A_{n2,j}}, {A_{n,j}h_j \over A_{n1,j}}\right\} \to \infty$ as $n \to \infty$.
\item[(iv)] The random field $\bm{e}$ is $\beta$-mixing with mixing coefficients $\beta(a;b) \leq \beta_1(a)\varpi_2(b)$ such that as $n \to \infty$, $h_j \to 0$, $1 \leq j \leq d$, 
\begin{align}
&{A_n^{(1)} \over (\overline{A}_{n1})^d} \sim 1,\ {A_n^{{1 \over 2}}(h_1 \dots h_d)^{{1 \over 2}} \over n^{1/q_2}(\overline{A}_{n1})^d (\log n)^{{1 \over 2}+\iota}} \gtrsim 1\ \text{for some $\iota \in (0,\infty)$}, \label{unif-F2}\\
&{n^d A_n^{1-d/2}(h_1 \dots h_d)^{d/2} \over (\log n)^{d/2} A_n^{(1)}}\beta_1(\underline{A}_{n2})\varpi_2(A_n h_1 \dots h_d) \to 0, \label{unif-F3}
\end{align}
where $A_n^{(1)} =\prod_{j=1}^{d}A_{n1,j}$, $\overline{A}_{n1}=\max_{1 \leq j \leq d}A_{n1,j}$, $\underline{A}_{n1}=\min_{1 \leq j \leq d}A_{n1,j}$,\\ 
$\overline{A}_{n2} =\max_{1 \leq j \leq d}A_{n2,j}$, and $\underline{A}_{n2}=\min_{1 \leq j \leq d}A_{n2,j}$.
\end{itemize}
\end{assumption}

Condition (\ref{unif-F3}) is concerned with large-block-small-block argument for $\beta$-mixing sequences. In order to derive uniform convergence rates of LP estimators, more careful arguments on the effects of non-equidistant sampling sites are necessary than those for proving asymptotic normality and this also requires additional works in comparison with the equidistant time series or spatial data. We first approximate LP estimators excluding bias terms, which can be written as a sum of spatially dependent data, by a sum of independent blocks by extending the blocking technique in \cite{Yu94}(Corollary 2.7) that does not require regularly spaced sampling sites. Then we derive the uniform convergence rates of LP estimators by applying maximum inequalities for independent and possibly not identically distributed random variables to the independent blocks. In the supplementary material, we will show that a wide class of L\'evy-driven MA random fields satisfies our $\beta$-mixing conditions. 

We assume the following conditions on the kernel function $K$:
\begin{assumption}\label{Ass:kernel-unif}
Let $K :\mathbb{R}^{d} \to \mathbb{R}$ be a kernel function such that
\begin{itemize}
\item[(i)] $\int K(\bm{z})d\bm{z} = 1$.
\item[(ii)] The kernel function $K$ is bounded and supported on $[-C_K, C_K]^d \subset [-1/2,1/2]^d$ for some $C_K>0$. Moreover, $K$ is Lipschitz continuous on $\mathbb{R}^d$, i.e., $|K(\bm{v}_1) - K(\bm{v}_2)| \leq L_K|\bm{v}_1 - \bm{v}_2|$ for some $L_K \in (0,\infty)$ and all $\bm{v}_1,\bm{v}_2 \in \mathbb{R}^d$. 
\item[(iii)] Define $\kappa_0^{(r)}:= \int K^r(\bm{z})d\bm{z}$, $\kappa_{j_1,\dots, j_M}^{(r)}:= \int \prod_{\ell=1}^{M}z_{j_\ell}K^r(\bm{z})d\bm{z}$, and 
\[
\check{\bm{z}} := (1, (\bm{z})'_1,\dots, (\bm{z})'_p)',\ (\bm{z})_{L} = \left(\prod_{\ell=1}^{L}z_{j_\ell}\right)'_{1 \leq j_1 \leq \dots \leq j_L \leq d},\ 1 \leq L \leq p. 
\]
The matrix $S = \int \left(
\begin{array}{c}
1 \\
\check{\bm{z}}
\end{array}
\right)
(1\ \check{\bm{z}}')K(\bm{z})d\bm{z}$ is non-singular. 
\end{itemize}
\end{assumption}

The next result provides uniform convergence rates of LP estimators $\partial_{j_1 \dots j_L}\hat{m}(\bm{z})$. 
\begin{theorem}\label{thm:LPR-unif}
Define $\mathrm{T}_n = \prod_{j=1}^{d}[-1/2+C_Kh_j, 1/2-C_Kh_j]$. Suppose that Assumptions \ref{Ass:mean-func-unif}, \ref{Ass:sample-unif}, \ref{Ass:RF-unif}, and \ref{Ass:kernel-unif} hold with $q_1 \geq q_2$. Then for $1 \leq j_1 \leq \dots \leq j_L\leq d$, $0 \leq L \leq p$, as $n \to \infty$, we have
\begin{align*}
&\sup_{\bm{z} \in \mathrm{T}_n}\left|\partial_{j_1\dots j_L}\hat{m}(\bm{z}) - \partial_{j_1\dots j_L}\hat{m}(\bm{z})\right|\\ 
&\quad= O_p\left( {\sum_{1 \leq j_1 \leq \dots \leq j_{p+1}\leq d}\prod_{\ell=1}^{p+1}h_{j_\ell} \over \prod_{\ell=1}^{L}h_{j_\ell}} + \sqrt{{\log n \over A_n h_1 \dots h_d \left(\prod_{\ell=1}^{L}h_{j_\ell}\right)^2 }}\right). 
\end{align*}
\end{theorem}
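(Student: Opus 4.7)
The plan is to invert the LP normal equations, bound the bias and stochastic pieces separately and uniformly in $\bm{z}\in \mathrm{T}_n$, and then read off the $(j_1,\dots,j_L)$-coordinate, which introduces the $1/\prod_{\ell=1}^{L}h_{j_\ell}$ factor through $H^{-1}$. Writing the system in the form
\begin{equation*}
H\bigl(\hat{\bm{\beta}}(\bm{z})-\bm{M}(\bm{z})\bigr)=\widehat{S}_n(\bm{z})^{-1}\bigl(\widehat{B}_n(\bm{z})+\widehat{V}_n(\bm{z})\bigr),
\end{equation*}
where $\widehat{S}_n(\bm{z})$ is the appropriately normalized Gram matrix built from $K_{Ah}(\bm{X}_i-A_n\bm{z})(H^{-1}\tilde{\bm{X}}_i)(H^{-1}\tilde{\bm{X}}_i)'$, and $\widehat{B}_n(\bm{z})$ and $\widehat{V}_n(\bm{z})$ are the Taylor-remainder and noise parts respectively, the general uniform kernel-estimator result in the supplementary material (of which this theorem is a special case) yields $\sup_{\bm{z}\in\mathrm{T}_n}\|\widehat{S}_n(\bm{z})-g(\bm{z})S\|\stackrel{p}{\to}0$, so $\widehat{S}_n(\bm{z})^{-1}$ is uniformly bounded in probability since $g$ is continuous and strictly positive on $R_0$.

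For the bias term I would Taylor-expand $m$ to order $p+1$ about $\bm{z}$ at each scaled site $\bm{X}_i/A_n$ lying in the kernel support; monomials of degree $\le p$ are captured exactly by the LP design, and the $(p+1)$-th order remainder contributes $\widehat{S}_n(\bm{z})B^{(d,p)}\bm{M}_n^{(d,p)}(\bm{z})$ plus a residual of order $o\bigl(\sum_{1\le j_1\le\cdots\le j_{p+1}\le d}\prod_{\ell=1}^{p+1}h_{j_\ell}\bigr)$ uniformly on $\mathrm{T}_n$, by continuity of the $(p+1)$-st partials of $m$ on the compact $R_0$. Dividing by $\prod_{\ell=1}^{L}h_{j_\ell}$ from the $H^{-1}$ extraction gives the first term of the stated rate.

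For the stochastic part I would split $\widehat{V}_n(\bm{z})=\widehat{V}_n^{\varepsilon}(\bm{z})+\widehat{V}_n^{e}(\bm{z})$ according to the noise source. The $\varepsilon$-piece, being i.i.d.\ conditional on $\{\bm{X}_i\}$, submits to a Bernstein-type inequality with moment-truncation using the finite $q_1$-th moment of $\varepsilon_1$, combined with a polynomial-mesh $\delta_n$-grid on $\mathrm{T}_n$ and the Lipschitz continuity of $K$, producing the $\sqrt{\log n/(A_nh_1\cdots h_d)}$ rate after $H^{-1}$ extraction. The $\bm{e}$-piece is the delicate one: I would partition $R_n$ into ``large'' cubes of side $A_{n1,j}$ separated by ``small'' buffers of side $A_{n2,j}$ and invoke the extension of Yu's (1994) coupling to irregularly spaced spatial data to couple $\bm{e}$ on the large cubes to independent copies, with coupling cost controlled by $\beta_1(\underline{A}_{n2})\varpi_2(A_nh_1\cdots h_d)$ times the number of large blocks. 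Conditions (\ref{unif-F2}) and (\ref{unif-F3}) in Assumption \ref{Ass:RF-unif} are calibrated precisely so that the coupling error and the small-block residual are asymptotically negligible at the target rate; the coupled independent large blocks are then controlled by the same Bernstein/grid argument used for $\widehat{V}_n^{\varepsilon}$.

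The main obstacle is implementing the coupling for $\widehat{V}_n^{e}$, because the sampling locations $\{\bm{X}_i\}$ are random and the number of sites falling in any given large or small block is itself random, so classical regularly-spaced block arguments do not apply verbatim. The resolution is to condition on $\{\bm{X}_i\}$ throughout the coupling, using that $\bm{e}$ and $\{\bm{X}_i\}$ are independent (Assumption \ref{Ass:sample-unif}(iii)) so that the $\beta$-mixing rate of $\bm{e}$ controls the coupling cost uniformly in the spatial configuration, and to replace per-block site counts by second-moment bounds that scale with the kernel-support volume $A_nh_1\cdots h_d$. The scaling conditions (i)--(iv) of Assumption \ref{Ass:RF-unif} are exactly what is needed to make this passage go through with the claimed $\sqrt{\log n/(A_n h_1\cdots h_d)}$ rate.
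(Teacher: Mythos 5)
Your proposal is correct and follows essentially the same route as the paper: the same decomposition $H(\hat{\bm{\beta}}-\bm{M})=S_n^{-1}(V_n+B_n)$, uniform convergence of the Gram matrix, a Taylor bound for the bias, and for the stochastic term a truncation at level $n^{1/q_2}(\log n)^{\iota}$ plus large-block/small-block partitioning, the coupling lemma of Yu (1994, Corollary 2.7) applied conditionally on the (independent) sampling sites, a polynomial grid with the Lipschitz property of $K$, and Bernstein's inequality on the coupled independent blocks --- all of which the paper packages into the general Proposition \ref{prp:LP-unif} and then specializes. The only cosmetic difference is that the paper controls the random per-block site counts via Lahiri's almost-sure bound (Lemma \ref{n summands}) together with second-moment bounds, rather than by second-moment bounds alone, but this does not change the argument.
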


\subsection{Estimation of asymptotic variances of LP estimators}\label{sec:AV-LPR}

An estimator of the asymptotic variance of the LP estimators $\hat{\bm{\beta}}(\bm{0})$ can be constructed. Define $\hat{g}(\bm{0}) = {1 \over nh_1 \dots h_d}\sum_{i=1}^{n}K_{Ah}(\bm{X}_i)$,
\begin{align*}
\hat{W}_{n,1}(\bm{0}) &= {A_n \over n^2h_1 \dots h_d}\sum_{i,j=1}^{n}K_{Ah}(\bm{X}_i)K_{Ah}(\bm{X}_j)\bar{K}_b(\bm{X}_i - \bm{X}_j)\\
&\quad \times \left(Y(\bm{X}_i) - \widehat{m}(\bm{X}_i/A_n)\right)\left(Y(\bm{X}_j) - \widehat{m}(\bm{X}_j/A_n)\right),
\end{align*}
where $\bar{K}(\bm{w}) :\mathbb{R}^d \to [0,1]$ is a kernel function, $\bar{K}_b(\bm{w}) = \bar{K}\left({w_1 \over b_1},\dots,{w_d \over b_d}\right)$, and $b_j$ is a sequence of positive constants such that $b_j \to \infty$ as $n \to \infty$. We assume the following conditions for $\bar{K}$:

\begin{assumption}\label{Ass: cov-kernel}
Let $\bar{K}:\mathbb{R}^d \to [0,1]$ is a continuous function such that
\begin{itemize}
\item[(i)] $\bar{K}(\bm{0}) = 1$, $\bar{K}(\bm{w}) = 0$ for $\|\bm{w}\|>1$. 
\item[(ii)] $|1 - \bar{K}(\bm{w})| \leq \bar{C}\|\bm{w}\|$ for $\|\bm{w}\| \leq \bar{c}$ where $\bar{C}$ and  $\bar{c}$ are some positive constants.  
\end{itemize}
\end{assumption}
An example of $\bar{K}$ is the Bartlett kernel: $\bar{K}(\bm{w})=(1-\|\bm{w}\|)$ for $\|\bm{w}\| \leq 1$ and $0$ for $\|\bm{w}\|>1$.

\begin{proposition}\label{prp: var-est}
Assume $b_j \to \infty$ and ${b_j \over  A_{n,j}h_j} \to 0$, $j=1,\dots,d$ as $n \to \infty$. Suppose that Assumptions \ref{Ass:RF} 
(iii) and \ref{Ass:band} (ii)-(v) hold with $\alpha$-mixing coefficients replaced by $\beta$-mixing coefficients, and that Assumptions \ref{Ass:mean-func-unif}, \ref{Ass:sample-unif}, \ref{Ass:RF-unif}, \ref{Ass:kernel-unif}, and \ref{Ass: cov-kernel} hold with $q_1 \geq q_2$. Then, as $n \to \infty$, the following result holds:
\begin{align*}
\hat{W}_n(\bm{0}):=  {(\kappa_0^{(2)})^{-1}\hat{W}_{n,1}(\bm{0})\over \hat{g}^2(\bm{0})}  \stackrel{p}{\to}  {\kappa(\eta^2(\bm{0}) + \sigma_{\varepsilon}^2(\bm{0})) \over g(\bm{0})} + \eta^2(\bm{0})\int \sigma_{\bm{e}}(\bm{v})d\bm{v}.
\end{align*}
\end{proposition}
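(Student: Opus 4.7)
Write $W_0:=\kappa(\eta^2(\bm 0)+\sigma_\varepsilon^2(\bm 0))/g(\bm 0)+\eta^2(\bm 0)\int\sigma_{\bm e}(\bm v)d\bm v$ for the target. The denominator is easy: $\hat g(\bm 0)\stackrel{p}{\to}g(\bm 0)$ follows from the standard kernel density argument (change of variables $\bm u=\bm x/(A_nh)$ for the bias, and $\mathrm{Var}(\hat g(\bm 0))\lesssim 1/(nh_1\cdots h_d)\to 0$ by Assumption \ref{Ass:band}(ii)), so by Slutsky it suffices to prove
\[
\hat W_{n,1}(\bm 0)\stackrel{p}{\to}\kappa_0^{(2)}\bigl\{\kappa g(\bm 0)(\eta^2(\bm 0)+\sigma_\varepsilon^2(\bm 0))+g^2(\bm 0)\eta^2(\bm 0){\textstyle\int}\sigma_{\bm e}(\bm v)d\bm v\bigr\}.
\]

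First I would remove the estimation error in $\hat m$. With $u_i=e_{n,i}+\varepsilon_{n,i}$ and $\Delta_i=\hat m(\bm X_i/A_n)-m(\bm X_i/A_n)$, one has $Y(\bm X_i)-\hat m(\bm X_i/A_n)=u_i-\Delta_i$. For every $\bm X_i$ on the support of $K_{Ah}$ we have $\bm X_i/A_n\in C_K h\cdot[-1,1]^d\subset \mathrm T_n$ eventually, so Theorem \ref{thm:LPR-unif} with $L=0$ yields $\max_i|\Delta_i|\mathbbm 1\{K_{Ah}(\bm X_i)\neq 0\}=O_p(r_n)$ with $r_n\to 0$. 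Expanding $\hat u_i\hat u_j=u_iu_j-u_i\Delta_j-u_j\Delta_i+\Delta_i\Delta_j$ and using Cauchy--Schwarz inside the double sum show that replacing $\hat u_i\hat u_j$ by $u_iu_j$ costs $o_p(1)$ once the main piece $W_{n,1}:=\hat W_{n,1}|_{\hat u\mapsto u}$ is $O_p(1)$. I would then split $W_{n,1}=W_{n,1}^D+W_{n,1}^{OD}$ according to whether $i=j$ or $i\neq j$.

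For the diagonal part, conditioning on $\{\bm X_i\}$ gives $E[u_i^2\mid \bm X]=\eta^2(\bm X_i/A_n)+\sigma_\varepsilon^2(\bm X_i/A_n)$, and a change of variables $\bm u=\bm x/(A_nh)$ yields $E[W_{n,1}^D]=(A_n/n)\int K^2(\bm u)(\eta^2+\sigma_\varepsilon^2)(h\bm u)g(h\bm u)d\bm u\to\kappa\kappa_0^{(2)}g(\bm 0)(\eta^2(\bm 0)+\sigma_\varepsilon^2(\bm 0))$ by continuity. For the off-diagonal part, cross terms involving $\varepsilon$ vanish by Assumption \ref{Ass:sample-unif}(iii), leaving $E[u_iu_j\mid\bm X]=\eta(\bm X_i/A_n)\eta(\bm X_j/A_n)\sigma_{\bm e}(\bm X_j-\bm X_i)$ for $i\neq j$. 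The substitution $(\bm u,\bm v)=(\bm x_1/(A_nh),\bm x_2-\bm x_1)$ together with $|\bm v/A_n|\leq b/A_n=o(h)$ on $\mathrm{supp}(\bar K_b)$ (since $b_j/(A_{n,j}h_j)\to 0$), continuity of $K,g,\eta$, the pointwise convergence $\bar K_b(\bm v)\to 1$ as $b_j\to\infty$, and dominated convergence with the integrable majorant $K(\bm u)^2|\sigma_{\bm e}(\bm v)|$ (Assumption \ref{Ass:RF-unif}(ii)) give $E[W_{n,1}^{OD}]\to\kappa_0^{(2)}g^2(\bm 0)\eta^2(\bm 0)\int\sigma_{\bm e}(\bm v)d\bm v$. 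Together these produce the desired limit in expectation.

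The main obstacle is controlling $\mathrm{Var}(W_{n,1}^{OD})$, a quadruple sum in $\{\bm X_i,u_i\}$ with $u_i$ dependent through the random field $\bm e$. The plan is to condition on $\{\bm X_i\}$, expand $\mathrm{Var}(W_{n,1}^{OD}\mid \bm X)$ as a sum of fourth-order cumulants of $\{u_{i_1},u_{i_2},u_{i_3},u_{i_4}\}$, and bound each cumulant by the $\beta$-mixing covariance inequality $|\mathrm{Cov}(U,V)|\lesssim \beta^{1-2/q}(d(U,V))\|U\|_q\|V\|_q$ applied at the largest pairwise gap $d(\bm X_{i_k},\bm X_{i_\ell})$. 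The rates in Assumption \ref{Ass:RF-unif}(iv) (imposed with $\beta$ in place of $\alpha$ by the proposition's hypothesis) combined with $b_j/(A_{n,j}h_j)\to 0$ ensure that configurations in which the four indices split into two well-separated pairs give vanishing contribution, while clustered configurations of smaller cardinality contribute $o(1)$ after taking the outer expectation against $\prod_k A_n^{-1}g(\bm x_k/A_n)$. An analogous but easier argument handles $\mathrm{Var}(W_{n,1}^D)$. Combining these with $\hat g(\bm 0)\stackrel{p}{\to}g(\bm 0)>0$ by Slutsky completes the proof.
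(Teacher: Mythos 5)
Your proposal is correct and follows essentially the same route as the paper's proof: Slutsky plus the standard density argument for $\hat g(\bm 0)$, Theorem \ref{thm:LPR-unif} to replace $Y(\bm X_i)-\hat m(\bm X_i/A_n)$ by $e_{n,i}+\varepsilon_{n,i}$, a diagonal/off-diagonal (equivalently $ee$/$e\varepsilon$/$\varepsilon\varepsilon$) decomposition with the same change-of-variables limits for the expectations, and mixing-based fourth-moment bounds for the variances as in the proof of Theorem \ref{thm: LP-CLT}. The only cosmetic difference is that you dispose of $\bar K_b(\bm v)-1$ by dominated convergence, whereas the paper uses Assumption \ref{Ass: cov-kernel}(ii) and splits the integral at $\|\bm v\|\le \min_j b_j^{1/2}$; both work.
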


Theorem \ref{thm: LP-CLT} and Proposition \ref{prp: var-est} enable us to construct confidence intervals of $\partial_{j_1 \dots j_L}m(\bm{0})$. Consider a confidence interval of the form
\begin{align*}
C_{n,j_1 \dots j_L}(1-\tau) = \left[\partial_{j_1 \dots j_L}\widehat{m}(\bm{0}) \pm \sqrt{{\hat{W}_n(\bm{0})\left(\bm{s}_{j_1\dots j_L}!\right)^2\left(e'_{j_1 \dots j_L}S^{-1}\mathcal{K}S^{-1}e_{j_1 \dots j_L}\right) \over A_n h_1 \dots h_d \left(\prod_{\ell=1}^{L}h_{j_\ell}\right)^2}}q_{1-\tau/2}\right],
\end{align*}
where $q_{1-\tau}$ is the $(1-\tau)$-quantile of the standard normal random variable. Then we can show the asymptotic validity of the confidence interval as follows: 
\begin{corollary}\label{cor: LP-CI}
Let $\tau \in (0,1)$. Under the assumptions of Proposition \ref{prp: var-est} with 
\[
A_n h_1 \dots h_d \left((S^{-1}e_{j_1\dots j_L})'B^{(d,p)}M_n^{(d,p)}(\bm{0}) \right)^2 \to 0
\] 
as $n \to \infty$. Then, $\lim_{n \to \infty}P(\partial_{j_1 \dots j_L}m(\bm{0}) \in C_{n,j_1 \dots j_L}(1-\tau)) = 1-\tau$. 
\end{corollary}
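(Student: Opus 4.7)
The plan is to deduce the corollary by a standard Slutsky-type argument: combine Theorem~\ref{thm: LP-CLT} (which gives joint asymptotic normality of the centered LP coefficients, with an explicit bias term) with Proposition~\ref{prp: var-est} (which gives consistency of $\hat W_n(\bm 0)$), and then isolate the $(j_1,\dots,j_L)$-th coordinate by left-multiplication with the selector vector $e_{j_1\dots j_L}$.

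First I would translate Theorem~\ref{thm: LP-CLT} into a scalar CLT for the specific component of interest. Since the $(j_1,\dots,j_L)$-th entry of $H(\hat{\bm\beta}(\bm0)-\bm M(\bm0))$ is $(\prod_{\ell=1}^L h_{j_\ell})\bigl(\hat\beta_{j_1\dots j_L}(\bm0)-\partial_{j_1\dots j_L}m(\bm0)/\bm s_{j_1\dots j_L}!\bigr)$ and $\partial_{j_1\dots j_L}\hat m(\bm0)=\bm s_{j_1\dots j_L}!\,\hat\beta_{j_1\dots j_L}(\bm0)$, the Cramer--Wold device (applied with the selector $e_{j_1\dots j_L}$) yields
\begin{align*}
\sqrt{\tfrac{A_nh_1\cdots h_d(\prod_{\ell=1}^L h_{j_\ell})^2}{(\bm s_{j_1\dots j_L}!)^2}}\Bigl(\partial_{j_1\dots j_L}\hat m(\bm0)-\partial_{j_1\dots j_L}m(\bm0)\Bigr)-\sqrt{A_nh_1\cdots h_d}\,(S^{-1}e_{j_1\dots j_L})'B^{(d,p)}\bm M_n^{(d,p)}(\bm0)\\
\stackrel{d}{\to} N\!\Bigl(0,\;W\cdot e_{j_1\dots j_L}'S^{-1}\mathcal K S^{-1}e_{j_1\dots j_L}\Bigr),
\end{align*}
where $W=\kappa(\eta^2(\bm0)+\sigma_\varepsilon^2(\bm0))/g(\bm0)+\eta^2(\bm0)\int\sigma_{\bm e}(\bm v)d\bm v$.

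Next, I would kill the bias using the extra hypothesis $A_n h_1\cdots h_d\,\bigl((S^{-1}e_{j_1\dots j_L})'B^{(d,p)}\bm M_n^{(d,p)}(\bm0)\bigr)^2\to 0$, which says exactly that the deterministic bias term above tends to $0$. After rescaling, this gives the pivotal statement
\begin{equation*}
T_n:=\frac{\partial_{j_1\dots j_L}\hat m(\bm0)-\partial_{j_1\dots j_L}m(\bm0)}{\sqrt{W(\bm s_{j_1\dots j_L}!)^2(e_{j_1\dots j_L}'S^{-1}\mathcal K S^{-1}e_{j_1\dots j_L})/\bigl(A_nh_1\cdots h_d(\prod_{\ell=1}^L h_{j_\ell})^2\bigr)}}\stackrel{d}{\to}N(0,1).
\end{equation*}
Then, invoking Proposition~\ref{prp: var-est} to replace $W$ by the consistent estimator $\hat W_n(\bm0)$ and applying the continuous mapping theorem plus Slutsky's lemma (the remaining factors $\bm s_{j_1\dots j_L}!$, $S$, $\mathcal K$, and the bandwidth-based denominator are deterministic), the same convergence holds with $W$ replaced by $\hat W_n(\bm0)$. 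Rearranging the standardized statement shows that $\partial_{j_1\dots j_L}m(\bm0)\in C_{n,j_1\dots j_L}(1-\tau)$ iff $|T_n^{\hat W}|\le q_{1-\tau/2}$, and asymptotic normality of $T_n^{\hat W}$ together with symmetry of the standard normal gives the limiting coverage $1-\tau$.

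I expect no fundamental obstacles, only bookkeeping: the main care is matching the scaling factors $H$, $\bm s_{j_1\dots j_L}!$, and $\prod h_{j_\ell}$ consistently when extracting a one-dimensional marginal from the vector CLT, and verifying that the bias hypothesis in the statement is precisely the hypothesis needed for the $\sqrt{A_nh_1\cdots h_d}$-scaled bias term in Theorem~\ref{thm: LP-CLT} to vanish. Once these algebraic identifications are made, the rest is a standard pivot-and-Slutsky argument.
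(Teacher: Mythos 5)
Your proposal is correct and follows essentially the same route as the paper, which simply notes that the corollary is an immediate consequence of Theorem \ref{thm: LP-CLT} and Proposition \ref{prp: var-est}; your extra bookkeeping on extracting the scalar marginal via $e_{j_1\dots j_L}$, killing the bias with the stated hypothesis, and applying Slutsky is exactly the intended (and omitted) argument.
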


In the supplementary material, we see the finite sample properties of the confidence interval and find that it performs well.

\begin{remark}
As shown in Theorem 4.1, the expressions of asymptotic bias and variance of the LP estimators are very similar in structure to those from a standard random design for stationary time series and random fields. Therefore, we conjecture that plug-in methods to choose the bandwidth in such a design can be adapted to our setting.
\end{remark}

\section{Conclusion}

In this paper, we have advanced statistical theory of nonparametric regression for irregularly spaced spatial data. For this, we introduced a nonparametric regression model defined on a sampling region $R_n \subset \mathbb{R}^d$ and derived asymptotic normality and uniform convergence rates of the local polynomial estimators of order $p \geq 1$ for the mean function of the model under a stochastic sampling design. As an application of our main results, we discussed a two-sample test for the mean functions and their partial derivatives.  We also provided examples of random fields that satisfy our assumptions. In particular, our assumptions hold for a wide class of random fields that includes L\'evy-driven moving average random fields and popular Gaussian random fields as special cases. 

\clearpage

\appendix

\section{Proof of Theorem \ref{thm: LP-CLT}}

Now we prove Theorem \ref{thm: LP-CLT}. The proofs of other results are given in the supplementary material. 

\begin{proof}
Define $\bm{h}:= (h_1,\dots, h_d)'$ and for $\bm{x}, \bm{y} \in \mathbb{R}^{d}$, let $\bm{x} \circ \bm{y} = (x_1y_1 ,\dots, x_dy_d)'$ be the Hadamard product. Considering Taylor's expansion of $m(\bm{z})$ around $\bm{z}$, 
\begin{align*}
m(\bm{X}_i /A_n) &= (1, \check{\X}'_i)M(\bm{z}) + {1 \over (p+1)!}\sum_{1 \leq j_1 \leq \dots \leq j_{p+1}\leq d}{(p+1)! \over \bm{s}_{j_1\dots j_{p+1}}!}\partial_{j_1,\dots,j_{p+1}}m(\dot{\X}_i/A_n) \prod_{\ell=1}^{p+1}{X_{i,j_\ell} \over A_{n,j_\ell}},
\end{align*}
where $\dot{\bm{X}}_i = \bm{z} + \theta_i (\bm{X}_i-\bm{z})$ for some $\theta_i \in [0,1]$. Then we have
\begin{align*}
\hat{\bm{\beta}}(\bm{0}) - \bm{M}(\bm{0}) &= (\bm{X} \bm{W} \bm{X}')^{-1}\bm{X} \bm{W} (\bm{Y} - \bm{X}'\bm{M}(\bm{0}))\\
&= \left[\sum_{i=1}^{n}K_{Ah}\left(\bm{X}_i\right)
\left(
\begin{array}{c}
1 \\
\check{\bm{X}}_i 
\end{array}
\right)
(1\ \check{\bm{X}}'_i)\right]^{-1}
\sum_{i=1}^{n}K_{Ah}\left(\bm{X}_i\right)
\left(
\begin{array}{c}
1 \\
\check{\X}_i
\end{array}
\right)\\
&\quad \times \left(e_{n,i} + \varepsilon_{n,i} + \sum_{1 \leq j_1 \leq \dots \leq j_{p+1}\leq d}{1 \over \bm{s}_{j_1\dots j_{p+1}}!}\partial_{j_1,\dots,j_{p+1}}m(\dot{\bm{X}}_i/A_n)\prod_{\ell=1}^{p+1}{X_{i,j_\ell} \over A_{n,j_\ell}}\right).
\end{align*}
This yields $\sqrt{A_nh_1 \dots h_d}H(\hat{\bm{\beta}}(\bm{0}) - \bm{M}(\bm{0})) = S_n^{-1}(\bm{0})(V_n(\bm{0}) + B_n(\bm{0}))$, where
\begin{align*}
S_n(\bm{0}) &= {1 \over nh_1 \dots h_d}\sum_{i=1}^{n}K_{Ah}\left(\bm{X}_i\right)H^{-1}
\left(
\begin{array}{c}
1 \\
\check{\bm{X}}_i
\end{array}
\right)
(1\ \check{\bm{X}}'_i)H^{-1},\\
V_n(\bm{0}) &= {\sqrt{A_n h_1 \dots h_d} \over nh_1 \dots h_d}\sum_{i=1}^{n}K_{Ah}\left(\bm{X}_i\right)H^{-1}
\left(
\begin{array}{c}
1 \\
\check{\bm{X}}_i
\end{array}
\right)(e_{n,i}+\varepsilon_{n,i}) =: (V_{n,j_1\dots j_L}(\bm{0}))'_{1 \leq j_1\leq \dots \leq j_L \leq d, 0 \leq L \leq p},\\
B_n(\bm{0}) &= {\sqrt{A_n h_1 \dots h_d} \over nh_1 \dots h_d}\sum_{i=1}^{n}K_{Ah}\left(\bm{X}_i\right)H^{-1}
\left(
\begin{array}{c}
1 \\
\check{\bm{X}}_i
\end{array}
\right) \\
&\quad \times \sum_{1 \leq j_1 \leq \dots \leq j_{p+1}\leq d}{1 \over \bm{s}_{j_1\dots j_{p+1}}!}\partial_{j_1,\dots,j_{p+1}}m(\dot{\bm{X}}_i/A_n)\prod_{\ell=1}^{p+1}{X_{i,j_\ell} \over A_{n,j_\ell}} \\ 
&=: (B_{n,j_1\dots j_L}(\dot{\bm{X}}))'_{1 \leq j_1\leq \dots \leq j_L \leq d, 0 \leq L \leq p}.
\end{align*}

\noindent
(Step 1) In the supplementary material, we will show $S_{n}(\bm{0}) \stackrel{p}{\to} g(\bm{0})S$.

\noindent
(Step 2) Now we evaluate $V_n(\bm{0})$. For any $\bm{t} = (t_0,t_1,\dots, t_d, t_{11},\dots,t_{dd},\dots,t_{1\dots1},\dots, t_{d\dots d})' \in \mathbb{R}^D$, we define
\begin{align*}
\tilde{V}_n(\bm{0}) &:= {nh_1\dots h_d \over \sqrt{A_nh_1 \dots h_d}}\bm{t}'V_n(\bm{0}) = \sum_{i=1}^{n}K_{Ah}\left(\bm{X}_i\right)\left[\bm{t}'H^{-1}
\left(
\begin{array}{c}
1 \\
\check{\bm{X}}_i
\end{array}
\right)\right](e_{n,i} + \varepsilon_{n,i}).
\end{align*}

In this step, we will show that 
\begin{align}
\bm{t}'V_n(\bm{0}) &\stackrel{d}{\to} 
N\left(\bm{0}, g(\bm{0})\left\{\kappa(\eta^2(\bm{0}) + \sigma_{\varepsilon}^2(\bm{0})) + \eta^2(\bm{0})g(\bm{0})\int \sigma_{\bm{e}}(\bm{v})d\bm{v}\right\}\int K^2(\bm{z})\left[\bm{t}'
\left(
\begin{array}{c}
1 \\
\check{\bm{z}}
\end{array}
\right)\right]^2d\bm{z} \right). \label{Vn-CLT}
\end{align}

Before we show (\ref{Vn-CLT}), we introduce some notations. For $\bm{z}_0 = (z_{0,1},\dots z_{0,d})' \in \mathbb{R}^d$ and $\bm{\ell} = (\ell_1,\dots, \ell_d)' \in \mathbb{Z}^d$, let 
\[
\Gamma_{n,\bm{z}_0}(\bm{\ell};\bm{0}) = \prod_{j=1}^{d}(A_{n,j}z_{0,j} + (\ell_j-1/2)A_{n3,j}, A_{n,j}z_{0,j} + (\ell_j+1/2)A_{n3,j}]
\] 
with $A_{n3,j} = A_{n1,j} + A_{n2,j}$, and define the following hypercubes, $\Gamma_{n,\bm{z}_0}(\bm{\ell};\bm{\Delta}) = \prod_{j=1}^{d}I_{j,\bm{z}_0}(\Delta_j)$, $\bm{\Delta} = (\Delta_1,\dots, \Delta_d)' \in \{1,2\}^d$, where 
\begin{align*}
I_{j,\bm{z}_0}(\Delta_j) &= 
\begin{cases}
(A_{n,j}z_{0,j} + (\ell_j-1/2)A_{n3,j}, A_{n,j}z_{0,j} + (\ell_j-1/2)A_{n3,j} + A_{n1,j}] & \text{if $\Delta_j = 1$},\\
(A_{n,j}z_{0,j} + (\ell_j-1/2)A_{n3,j} + A_{n1,j}, A_{n,j}z_{0,j} + (\ell_j+1/2)A_{n3,j}] & \text{if $\Delta_j = 2$}. 
\end{cases}
\end{align*}
Let $\bm{\Delta}_{0} = (1,\dots, 1)'$. The partitions $\Gamma_{n,\bm{z}_0}(\bm{\ell};\bm{\Delta}_{0})$ correspond to ``large blocks'' and the partitions $\Gamma_{n,\bm{z}_0}(\bm{\ell};\bm{\Delta})$ for $\bm{\Delta} \neq \bm{\Delta}_{0}$ correspond to ``small blocks''. 
Let $L_{n1}(\bm{z}_0) = \{\bm{\ell} \in \mathbb{Z}^{d}: \Gamma_{n,\bm{z}_0}(\bm{\ell};\bm{0}) \subset R_n \cap (\bm{h} R_{n} + A_n \bm{z}_0)\}$ denote the index set of all hypercubes $\Gamma_{n,\bm{z}_0}(\bm{\ell};\bm{0})$ that are contained in $R_n \cap (\bm{h}R_{n}+A_n \bm{z}_0)$, and let $L_{n2}(\bm{z}_0) = \{\bm{\ell} \in \mathbb{Z}^{d}:  \Gamma_{n,\bm{z}_0}(\bm{\ell};\bm{0}) \cap R_n \cap (\bm{h} R_{n} + A_n \bm{z}_0) \neq 0,  \Gamma_{n}(\bm{\ell};\bm{0}) \cap (R_n \cap (\bm{h} R_{n}+A_n \bm{z}_0))^{c} \neq \emptyset \}$ be the index set of boundary hypercubes. Define $\Gamma_n(\bm{\ell};\bm{\Delta}) = \Gamma_{n,\bm{0}}(\bm{\ell};\bm{\Delta})$, $L_{n1} = L_{n1}(\bm{0})$, $L_{n2} = L_{n2}(\bm{0})$, and 
\[
\tilde{V}_n(\bm{\ell}; \bm{\Delta}) = \sum_{i: \bm{X}_i \in \Gamma_n(\bm{\ell};\bm{\Delta}) \cap \bm{h} R_n}K_{Ah}\left(\bm{X}_i\right)\left[\bm{t}'H^{-1}
\left(
\begin{array}{c}
1 \\
\check{\bm{X}}_i
\end{array}
\right)\right](e_{n,i} + \varepsilon_{n,i}).
\]
Note that by our summation convention, ${V}_n(\bm{\ell}; \bm{\Delta}) = 0$ if the set $\{i: \bm{X}_i \in \Gamma_n(\bm{\ell};\bm{\Delta}) \cap \bm{h}R_n\}$ is empty for some $\bm{\ell}$. 
Then we have 
\begin{align*}
\tilde{V}_n(\bm{0}) &= \sum_{\bm{\ell} \in L_{n1}}\tilde{V}_n(\bm{\ell};\bm{\Delta}_{0}) + \sum_{\bm{\Delta} \neq \bm{\Delta}_{0}}\sum_{\bm{\ell}\in L_{n1}}\tilde{V}_n(\bm{\ell};\bm{\Delta}) + \sum_{\bm{\Delta} \in \{1,2\}^d}\sum_{\bm{\ell} \in L_{n2}}\tilde{V}_n(\bm{\ell};\bm{\Delta}) =: \tilde{V}_{n1} + \tilde{V}_{n2} + \tilde{V}_{n3}. 
\end{align*}
Note that for $\bm{\ell}_1, \bm{\ell}_2 \in L_{n1}$, 
\begin{align}\label{Ga12-dist}
d\left(\Gamma_n(\bm{\ell}_1;\bm{\Delta}_0), \Gamma_n(\bm{\ell}_2;\bm{\Delta}_0)\right) &\geq \max\{|\bm{\ell}_1 - \bm{\ell}_2|-d,0\}\underline{A}_{n3} + \underline{A}_{n2},
\end{align}
where $\underline{A}_{n3} = \min_{1 \leq j \leq d}A_{n3,j}$ and $\underline{A}_{n2} = \min_{1 \leq j \leq d}A_{n2,j}$. 

Hence, by the Volkonskii-Rozanov inequality (cf. Proposition 2.6 in \cite{FaYa03}), we have
\begin{align}\label{Vn1-indep-approx}
\left|E[\exp (\mathrm{i}u\tilde{V}_{n1})] - \prod_{\ell \in L_{n1}}E[\exp(\mathrm{i}u\tilde{V}_n(\bm{\ell};\bm{\Delta}_0))]\right| &\lesssim \left({A_n h_1 \dots h_d \over A_n^{(1)}}\right)\alpha(\underline{A}_{n2};A_n h_1 \dots h_d). 
\end{align}

From Lyapounov's CLT, it is sufficient to verify the following conditions to show (\ref{Vn-CLT}): As $n \to \infty$, 
\begin{align}
{A_n \over n^2 h_1 \dots h_d}E[\tilde{V}_n^2(\bm{0})] &\to g(\bm{0})\! \left\{\kappa(\eta^2(\bm{0}) \!+\! \sigma_{\varepsilon}^2(\bm{0})) \!+\! \eta^2(\bm{0})g(\bm{0}) \!\!\!\int  \!\!\!\! \sigma_{\bm{e}}(\bm{v})d\bm{v}\right\}\!\! \int \!\!\!\! K^2(\bm{z})\left[\bm{t}'
\left(
\begin{array}{c}
1 \\
\check{\bm{z}}
\end{array}
\right)\right]^2\!\!\! d\bm{z}, \label{Vn-variance}\\
\sum_{\bm{\ell} \in L_{n1}}E[\tilde{V}_n^2(\bm{\ell}; \bm{\Delta}_0)] - E[\tilde{V}_n^2(\bm{0})] &= o\left(n^2A_n^{-1}h_1 \dots h_d\right), \label{Vn-var-diff}\\
\sum_{\bm{\ell} \in L_{n1}}E[\tilde{V}_n^4(\bm{\ell}; \bm{\Delta}_0)] &= o\left(\left(n^2A_n^{-1}h_1 \dots h_d\right)^2\right), \label{Vn-4}\\
\Var(\tilde{V}_{n2}) &= o\left(n^2A_n^{-1}h_1 \dots h_d\right), \label{Vn2-var}\\
\Var(\tilde{V}_{n3}) &= o\left(n^2A_n^{-1}h_1 \dots h_d\right). \label{Vn3-var}
\end{align}

In the following steps, we show (\ref{Vn-variance}) (Step 2-1), (\ref{Vn-4}) (Step 2-2), (\ref{Vn2-var}) and (\ref{Vn3-var}) (Step 2-3), and (\ref{Vn-var-diff}) (Step 2-4). 

(Step 2-1) Now we show (\ref{Vn-variance}). Let $\delta_{ij}$ be a function such that $\delta_{ij} = 1$ if $i =j$ and $\delta_{ij} = 0$ if $i \neq j$. Observe that 
\begin{align*}
\sigma_n^2(\bm{0}) &:= E_{\cdot \mid \bm{X}}\left(\tilde{V}_n^2(\bm{0})\right) = \sum_{i,j=1}^{n}\bm{t}'H^{-1}
\left(
\begin{array}{c}
1 \\
\check{\bm{X}_i}
\end{array}
\right)\bm{t}'H^{-1}
\left(
\begin{array}{c}
1 \\
\check{\bm{X}}_j
\end{array}
\right)K_{Ah}\left(\bm{X}_i\right)K_{Ah}\left(\bm{X}_j\right)\\
&\quad \quad \quad \quad \quad \quad \quad \quad \times \left\{\eta(\bm{X}_i/A_n)\eta(\bm{X}_j/A_n)\sigma_{\bm{e}}(\bm{X}_i-\bm{X}_j) + \sigma_{\varepsilon}^2(\bm{X}_i/A_n)\delta_{ij}\right\}.
\end{align*}
Thus we have
\begin{align*}
E_{\bm{X}}\left[\sigma_n^2(\bm{0})\right] &= nA_n^{-1}\int \left[\bm{t}'H^{-1}
\left(
\begin{array}{c}
1 \\
\check{(\bm{x}/A_n)}
\end{array}
\right)\right]^2K^2_{Ah}(\bm{x})\left\{\eta^2(\bm{x}/A_n) + \sigma_{\varepsilon}^2(\bm{x}/A_n)\right\}g(\bm{x}/A_n)d\bm{x}\\
&\quad \quad + n(n-1)A_n^{-2}\int \bm{t}'H^{-1}
\left(
\begin{array}{c}
1 \\
\check{(\bm{x}_1/A_n)}
\end{array}
\right)\bm{t}'H^{-1}
\left(
\begin{array}{c}
1 \\
\check{(\bm{x}_2/A_n)}
\end{array}
\right)K_{Ah}(\bm{x}_1)K_{Ah}(\bm{x}_2)\\
&\quad \quad \quad  \times \eta(\bm{x}_1/A_n)\eta(\bm{x}_2/A_n)\sigma_{\bm{e}}(\bm{x}_1 - \bm{x}_2)g(\bm{x}_1/A_n)g(\bm{x}_2/A_n)d\bm{x}_1d\bm{x}_2\\
&\quad =: \sigma^2_{n,1} + \sigma^2_{n,2}. 
\end{align*}
For $\sigma^2_{n,1}$, we have 
\begin{align}\label{sig2_n1}
\sigma^2_{n,1} &= nh_1\dots h_d \int \left[\bm{t}'
\left(
\begin{array}{c}
1 \\
\check{\bm{z}}
\end{array}
\right)\right]^2K^2(\bm{z})\left\{\eta^2(\bm{z} \circ \bm{h}) + \sigma^2_{\varepsilon}(\bm{z} \circ \bm{h})\right\}g(\bm{z} \circ \bm{h})d\bm{z} \nonumber \\
&= nh_1 \dots h_d (\eta^2(\bm{0}) + \sigma^2_{\varepsilon}(\bm{0}))g(\bm{0})\left(\int  K^2(\bm{z})\left[\bm{t}'
\left(
\begin{array}{c}
1 \\
\check{\bm{z}}
\end{array}
\right)\right]^2 d\bm{z}\right)(1+o(1)).
\end{align}
%%%%%%%%%%%%%%%%%%%%%%%%%%
For $\sigma^2_{n,2}$, we have
\begin{align*}
\sigma^2_{n,2} &= n(n-1)\int_{R_0^2} \sigma_{\bm{e}}(A_n(\bm{y}_1 - \bm{y}_2)) \left[\bm{t}'H^{-1}
\left(
\begin{array}{c}
1 \\
\check{\bm{y}}_1
\end{array}
\right)\right] \left[\bm{t}'H^{-1}
\left(
\begin{array}{c}
1 \\
\check{\bm{y}}_2
\end{array}
\right)\right] \\
&  \times K_{h}(\bm{y}_1)K_{h}(\bm{y}_2)\eta(\bm{y}_1)\eta(\bm{y}_2)g(\bm{y}_1)g(\bm{y}_2)d\bm{y}_1d\bm{y}_2\\
&= n(n-1)(h_1 \dots h_d)^2\int_{\bm{h}^{-1} R_0^2} \sigma_{\bm{e}}(A_n(\bm{z}_1 - \bm{z}_2) \circ \bm{h}) \left[\bm{t}'
\left(
\begin{array}{c}
1 \\
\check{\bm{z}}_1
\end{array}
\right)\right] \left[\bm{t}'
\left(
\begin{array}{c}
1 \\
\check{\bm{z}}_2
\end{array}
\right)\right] \\
&  \times K(\bm{z}_1)K(\bm{z}_2)\eta(\bm{z}_1 \circ \bm{h})\eta(\bm{z}_2 \circ \bm{h})g(\bm{z}_1 \circ \bm{h})g(\bm{z}_2 \circ \bm{h})d\bm{z}_1d\bm{z}_2\\
& = n(n-1)(h_1 \dots h_d)^2\int_{R'_{\bm{h},0}} \sigma_{\bm{e}}(A_n\bm{w} \circ \bm{h}) \left(\int_{R_{\bm{h},0}(\bm{w})} \left[\bm{t}'
\left(
\begin{array}{c}
1 \\
\check{(\bm{z}_2 + \bm{w})}
\end{array}
\right)\right] \left[\bm{t}'
\left(
\begin{array}{c}
1 \\
\check{\bm{z}}_2
\end{array}
\right)\right] \right. \\
&\left.    \times K(\bm{z}_2 + \bm{w})K(\bm{z}_2)\eta((\bm{z}_2 + \bm{w})\circ \bm{h})\eta(\bm{z}_2 \circ \bm{h})g((\bm{z}_2 + \bm{w})\circ \bm{h})g(\bm{z}_2 \circ \bm{h})d\bm{z}_2\right)d\bm{w}\\
& = n(n-1)h_1 \dots h_d\int_{\bm{h} R'_{\bm{h}, 0}} \sigma_{\bm{e}}(A_n\bm{u}) \left(\int_{R_{\bm{h}, 0}(\bm{u} /\bm{h})} \left[\bm{t}'
\left(
\begin{array}{c}
1 \\
\check{(\bm{z}_2 + \bm{u} \circ \bm{h}^{-1})}
\end{array}
\right)\right] \left[\bm{t}'
\left(
\begin{array}{c}
1 \\
\check{\bm{z}}_2
\end{array}
\right)\right]  \right. \\
&\left.  \times K(\bm{z}_2 + \bm{u} \circ \bm{h}^{-1})K(\bm{z}_2)\eta(\bm{z}_2 \circ \bm{h} + \bm{u})\eta(\bm{z}_2 \circ \bm{h})g((\bm{z}_2 \circ \bm{h} + \bm{u})g(\bm{z}_2 \circ \bm{h})d\bm{z}_2\right)d\bm{u}\\
& = {n(n-1) \over A_n}h_1 \dots h_d\int_{A_n \bm{h}R'_{\bm{h},0}}\!\!\!\!\!\!\!\!\!\!\! \sigma_{\bm{e}}(\bm{v}) \left(\int_{R_{\bm{h},0}((\bm{v} \circ \bm{h}^{-1})/A_n)} \!\left[\bm{t}'
\left(
\begin{array}{c}
1 \\
\check{\left(\bm{z}_2 + {\bm{v} \circ \bm{h}^{-1}\over A_n}\right)}
\end{array}
\right)\right]\!\! \left[\bm{t}'
\left(
\begin{array}{c}
1 \\
\check{\bm{z}}_2
\end{array}
\right)\right]  \right. \\
&\left.   \times K\!\! \left(\!\bm{z}_2  \!+\! {\bm{v} \circ \bm{h}^{-1}\over A_n}\!\right)\! K(\bm{z}_2)\eta\! \left(\!\bm{z}_2 \circ \bm{h} \!+\! {\bm{v} \over A_n}\!\right)\!\eta(\bm{z}_2 \circ \bm{h})g\! \left(\! \bm{z}_2 \circ \bm{h} \!+\! {\bm{v} \over A_n}\!\right)g(\bm{z}_2 \circ \bm{h})d\bm{z}_2 \!\right)d\bm{v}
\end{align*}
where 
\begin{align*}
R_{\bm{h},0}' &\!= \{\bm{w} = \bm{z}_1 \!-\! \bm{z}_2: \bm{z}_1, \bm{z}_2 \in \bm{h}^{-1}\!R_0\},\ R_{\bm{h},0}(\bm{w}) \!= \{\bm{z}_2 : \bm{z}_2 \in \bm{h}^{-1}\!R_0 \! \cap \! (\bm{h}^{-1}R_0 \!+\! \bm{w})\},\\
A_n \bm{h}R_{\bm{h},0}' &=\{(A_{n,1}x_1, \dots, A_{n,d}x_d): \bm{x} = (x_1,\dots,x_d)' \in \bm{h}R_{\bm{h},0}'\}.
\end{align*}
We divide the integral $\int_{A_n \bm{h} R_{\bm{h},0}'} $ into two parts $\int_{A_n \bm{h} R_{\bm{h},0}' \cap \{|\bm{v}|\leq M\}}$ and $\int_{A_n \bm{h} R_{\bm{h},0}' \cap \{|\bm{v}| > M\}}$ for some $M>0$ and define these as $\sigma_{n,21}^2$ and $\sigma_{n,22}^2$, respectively. Observe that as $n \to \infty$, $|\sigma_{n,22}^2| \lesssim \int_{\{|\bm{v}|>M\}} |\sigma_{\bm{e}}(\bm{v})| d\bm{v}$ which can be made arbitrary small by choosing a large $M$. Further, observe that as $n \to \infty$
\begin{align*}
&1\{A_n \bm{h} R_{\bm{h},0}' \cap \{|\bm{v}| \leq M\}\}\int_{R_{\bm{h},0}(\bm{v}/(A_n\bm{h}))} \left[\bm{t}'
\left(
\begin{array}{c}
1 \\
\check{\left(\bm{z}_2 + {\bm{v} \circ \bm{h}^{-1} \over A_n}\right)}
\end{array}
\right)\right] \left[\bm{t}'
\left(
\begin{array}{c}
1 \\
\check{\bm{z}}_2
\end{array}
\right)\right]  \\
&\quad \quad  \times K\! \left(\!\bm{z}_2 \!+\! {\bm{v} \circ \bm{h}^{-1} \over A_n}\right)\!\!K(\bm{z}_2)\eta\left(\!\bm{z}_2 \circ \bm{h} \!+\! {\bm{v} \over A_n}\right)\!\eta(\bm{z}_2 \circ \bm{h})g\!\left(\!\bm{z}_2 \circ \bm{h} \!+\! {\bm{v}   \over A_n}\right)g(\bm{z}_2 \circ \bm{h})d\bm{z}_2\\
&= 1\{|\bm{v}|\leq M\}\eta^2(\bm{0})g^2(\bm{0})\left(\int  K^2(\bm{z}_2)\left[\bm{t}'
\left(
\begin{array}{c}
1 \\
\check{\bm{z}}_2
\end{array}
\right)\right]^2d\bm{z}_2\right)(1+o(1)).
\end{align*}
Then as $n \to \infty$, we have
\begin{align*}
\sigma_{n,21}^2 &= \eta^2(\bm{0})g^2(\bm{0})\left(\int_{\{|\bm{v}| \leq M\}}\sigma_{\bm{e}}(\bm{v})d\bm{v}\right)\left(\int  K^2(\bm{z}_2)\left[\bm{t}'
\left(
\begin{array}{c}
1 \\
\check{\bm{z}}_2
\end{array}
\right)\right]^2d\bm{z}_2\right)(1+o(1)).
\end{align*}
Therefore, we have
\begin{align}\label{sig2_n2}
\sigma^2_{n,2}& = n^2A_n^{-1}h_1 \dots h_d\eta^2(\bm{0})g^2(\bm{0})\left(\int \sigma_{\bm{e}}(\bm{v})d\bm{v}\right)\left(\int K^2(\bm{z})\left[\bm{t}'\left(
\begin{array}{c}
1 \\
\check{\bm{z}}
\end{array}
\right)\right]^2d\bm{z}\right)(1 + o(1)). 
\end{align}
%%%%%%%%%%%%%%%%%%%%%%%%%%
By (\ref{sig2_n1}) and (\ref{sig2_n2}), we have
\begin{align*}
\Var(\bm{t}'V_n(\bm{0})) &= g(\bm{0})\left\{\kappa( \eta^2(\bm{0}) + \sigma_\varepsilon^2(\bm{0}))+ \eta^2(\bm{0})g(\bm{0})\int \sigma_{\bm{e}}(\bm{v})d\bm{v}\right\}\!\left(\!\int \!\! K^2(\bm{z})\!\left[\bm{t}'\left(
\begin{array}{c}
1 \\
\check{\bm{z}}
\end{array}
\right)\right]^2\!\! d\bm{z}\right)(1 \!+\! o(1)). 
\end{align*}

(Step 2-2) Now we show (\ref{Vn-4}). Define $I_{n}(\bm{\ell}) \!=\! \{\bm{i} \! \in \! \mathbb{Z}^d \!: \bm{i} \!+\! (-1/2,1/2]^d \! \subset \! \Gamma_n(\bm{\ell}; \bm{\Delta}_0)\}$ for $\bm{\ell} \in L_{n1}$ and 
\[
\tilde{V}_n(\bm{i}) = \sum_{i=1}^{n}K_{Ah}\left(\bm{X}_i\right)\left[\bm{t}'H^{-1}
\left(
\begin{array}{c}
1 \\
\check{\bm{X}}_i
\end{array}
\right)\right](e_{n,i} + \varepsilon_{n,i})1\{\bm{X}_i \in [\bm{i} + (-1/2,1/2]^d] \cap R_n\}. 
\]

Observe that 
\begin{align*}
&E[\tilde{V}_n^4(\bm{\ell}; \bm{\Delta}_0)] = E\left[\left(\sum_{\bm{i} \in I_n(\bm{\ell})}\tilde{V}_n(\bm{i})\right)^4\right]\\ 
&= \sum_{\bm{i} \in I_n(\bm{\ell})}E\left[\tilde{V}_n^4(\bm{i})\right] + \sum_{\bm{i}, \bm{j} \in I_n(\bm{\ell}), \bm{i} \neq \bm{j}}E\left[\tilde{V}_n^3(\bm{i})\tilde{V}_n(\bm{j})\right] + \sum_{\bm{i}, \bm{j} \in I_n(\bm{\ell}), \bm{i} \neq \bm{j}}E\left[\tilde{V}_n^2(\bm{i})\tilde{V}_n^2(\bm{j})\right]\\
&\quad + \sum_{\bm{i}, \bm{j}, \bm{k} \in I_n(\bm{\ell}), \bm{i} \neq \bm{j} \neq \bm{k}}\!\!\!\!\!\!\!\!\!\!\!\!E \! \left[\tilde{V}_n^2(\bm{i})\tilde{V}_n(\bm{j})\tilde{V}_n(\bm{k})\right] + \sum_{\bm{i}, \bm{j}, \bm{k}, \bm{p} \in I_n(\bm{\ell}), \bm{i} \neq \bm{j} \neq \bm{k} \neq \bm{p}}\!\!\!\!\!\!\!\!\!\!\!\!E \! \left[\tilde{V}_n(\bm{i})\tilde{V}_n(\bm{j})\tilde{V}_n(\bm{k})\tilde{V}_n(\bm{p})\right]\\
& =: Q_{n1} + Q_{n2} + Q_{n3} + Q_{n4} + Q_{n5}. 
\end{align*}

For $Q_{n1}$, we have
\begin{align*}
&E[\tilde{V}_n^4(\bm{i})] = E_{\bm{X}}[E_{\cdot \mid \bm{X}}[\tilde{V}_n^4(\bm{i})]]\\
&= \sum_{j_1,j_2,j_3,j_4 =1}^{n}E\left[\prod_{k=1}^{4}K_{Ah}\left(\bm{X}_{j_k}\right)\left[\bm{t}'H^{-1}
\left(
\begin{array}{c}
1 \\
\check{\bm{X}}_{j_k}
\end{array}
\right)\right]1\{\bm{X}_{j_k} \in [\bm{i} + (-1/2,1/2]^d] \cap R_n\} \right. \\
&\left. \quad \quad \quad \quad \times E_{\cdot \mid \bm{X}}[e_{n,j_k} + \varepsilon_{n,j_k}]\right]\\
&\lesssim \sum_{j_1,j_2,j_3,j_4 =1}^{n}\!\!\!\!\!\!\!E\left[\prod_{k=1}^{4}\left|K_{Ah}\! \left(\bm{X}_{j_k}\right)\!\!\left[\! \bm{t}'H^{-1}
\!\!\left(\!
\begin{array}{c}
1 \\
\check{\bm{X}}_{j_k}
\end{array}
\!\right)\!\right]\right|\! 1\{\bm{X}_{j_k}\!\! \in \! [\bm{i} \!+\! (-1/2,1/2]^d] \cap R_n\}\eta\!\left(\!{\bm{X}_{j_k} \over A_n}\!\right)\!\right]\\
& + \sum_{j_1,j_2,j_3,j_4 =1}^{n}\!\!\!\!\!\!\!\! E\left[\prod_{k=1}^{4}\left|K_{Ah}\left(\bm{X}_{j_k}\right)\!\! \left[\! \bm{t}'H^{-1}
\!\!\left(\!
\begin{array}{c}
1 \\
\check{\bm{X}}_{j_k}
\end{array}
\!\right)\!\right]\right|\! 1\{\bm{X}_{j_k} \!\! \in \!  [\bm{i} \!+\! (-1/2,1/2]^d] \!\cap\! R_n\}\sigma_{\varepsilon}\!\left(\!{\bm{X}_{j_k} \over A_n}\!\right)\!\right]\\
&=: Q_{n11} + Q_{n12}. 
\end{align*}

For $Q_{n11}$, we have
\begin{align*}
Q_{n11} &\lesssim nE\left[\left|K_{Ah}\left(\bm{X}_{1}\right)\left[\bm{t}'H^{-1}
\left(
\begin{array}{c}
1 \\
\check{\bm{X}}_{1}
\end{array}
\right)\right]\right|^41\{\bm{X}_{1} \in [\bm{i} + (-1/2,1/2]^d] \cap R_n\}\eta^4(\bm{X}_{1}/A_n)\right] \\
& + n^2E\left[\left|K_{Ah}\left(\bm{X}_{1}\right)\left[\bm{t}'H^{-1}
\left(
\begin{array}{c}
1 \\
\check{\bm{X}}_{1}
\end{array}
\right)\right]\right|^31\{\bm{X}_{1} \in [\bm{i} + (-1/2,1/2]^d] \cap R_n\} \right. \\
&\left.  \times \left|K_{Ah}\left(\bm{X}_{2}\right) \! \left[\bm{t}'H^{-1} \!
\left( \!
\begin{array}{c}
1 \\
\check{\bm{X}}_{2}
\end{array}
\! \right)\right]\right|1\{\bm{X}_{2} \in [\bm{i} + (-1/2,1/2]^d] \cap R_n\} \eta^3(\bm{X}_{1}/A_n)\eta(\bm{X}_{2}/A_n)\right] \\
& + n^2E\left[\prod_{\ell=1}^{2}\left|K_{Ah}\left(\bm{X}_{\ell}\right) \! \left[\bm{t}'H^{-1} \!
\left(
\begin{array}{c}
1 \\
\check{\bm{X}}_{\ell}
\end{array}
\right)\right]\right|^2 \!\! 1\{\bm{X}_{\ell} \in [\bm{i} + (-1/2,1/2]^d] \cap R_n\} \eta^2(\bm{X}_{\ell}/A_n)\right]\\
& + n^3E\left[\left|K_{Ah}\left(\bm{X}_{1}\right)\left[\bm{t}'H^{-1}
\left(
\begin{array}{c}
1 \\
\check{\bm{X}}_{1}
\end{array}
\right)\right]\right|^21\{\bm{X}_{1} \in [\bm{i} + (-1/2,1/2]^d] \cap R_n\}\eta^2(\bm{X}_1/A_n) \right. \\
&\left.  \times \prod_{\ell = 2}^{3}\left|K_{Ah}\left(\bm{X}_{\ell}\right)\left[\bm{t}'H^{-1}
\left(
\begin{array}{c}
1 \\
\check{\bm{X}}_{\ell}
\end{array}
\right)\right]\right|1\{\bm{X}_{\ell} \in [\bm{i} + (-1/2,1/2]^d] \cap R_n\}\eta(\bm{X}_{\ell}/A_n)\right]\\
& + n^4E\left[\prod_{\ell=1}^{4}\left|K_{Ah}\left(\bm{X}_{\ell}\right)\left[\bm{t}'H^{-1}
\left(
\begin{array}{c}
1 \\
\check{\bm{X}}_{\ell}
\end{array}
\right)\right]\right|1\{\bm{X}_{\ell} \in [\bm{i} + (-1/2,1/2]^d] \cap R_n\} \eta(\bm{X}_{\ell}/A_n)\right]\\
&=: Q_{n111} + Q_{n112} + Q_{n113} + Q_{n114}. 
\end{align*}

For $Q_{n111}$, we have
\begin{align*}
Q_{n111} &= nA_n^{-1}\int \left|K_{Ah}(\bm{x})\left[\bm{t}'H^{-1}
\left(
\begin{array}{c}
1 \\
\check{(\bm{x}/A_n)}
\end{array}
\right)\right]\right|^41\{\bm{x} \in [\bm{i} + (-1/2,1/2]^d] \cap R_n\}\eta^4(\bm{x}/A_n)g(\bm{x}/A_n)d\bm{x}\\
&= nA_n^{-1}\!A_nh_1 \dots h_d \!\! \int \! \left|K(\bm{z}) \!\left[\!\bm{t}' \!
\left(\!
\begin{array}{c}
1 \\
\check{\bm{z}}
\end{array}
\!\right)\! \right]\right|^4 \!\!\! 1\{\bm{z} \! \circ \! \bm{h} \!\in \! [\bm{i} \!+\! (-1/2,1/2]^d]/A_n \! \cap \! [-1/2,1/2]^d\} \\
&\quad \times \eta^4(\bm{z} \! \circ \! \bm{h})g(\bm{z} \!\circ \! \bm{h})d\bm{z} = O\left(nA_n^{-1}\right). 
\end{align*}
Likewise, $Q_{n112} = O(n^2A_n^{-2})$, $Q_{n113} = O(n^3A_n^{-3})$, and $Q_{n114} = O(n^4A_n^{-4})$.
Then we have $Q_{n11} = O(n^4A_n^{-4})$. We can also show that $Q_{n12} = O(n^4A_n^{-4} )$.
Therefore, we have
\begin{align}\label{Qn1}
Q_{n1} &\lesssim  [\![I_n(\bm{\ell})]\!]n^4A_n^{-4} \lesssim A_n^{(1)}(nA_n^{-1})^4.
\end{align}

For $Q_{n2}$,  by the $\alpha$-mixing property of $\bm{e}$ and Proposition 2.5 in \cite{FaYa03}, we have
\begin{align}
Q_{n2} &\lesssim \sum_{k=1}^{\overline{A}_{n1}} \sum_{\bm{i}, \bm{j} \in I_n(\bm{\ell}), |\bm{i} - \bm{j}| = k}\alpha^{1-4/q}(\max\{k-d,0\};1)E[|\tilde{V}_n(\bm{i})|^q]^{3/q}E[|\tilde{V}_n(\bm{j})|^q]^{1/q} \nonumber \\
&\lesssim A_n^{(1)}(nA_n^{-1})^4\left(1 + \sum_{k=1}^{\overline{A}_{n1}}k^{d-1}\alpha_1^{1-4/q}(k)\right).  \label{Qn2}
\end{align}
where $\overline{A}_{n1} = \max_{1 \leq j \leq d}A_{n1,j}$. Likewise, 
\begin{align}
Q_{n3} &\lesssim A_n^{(1)}(nA_n^{-1})^4\left(1 + \sum_{k=1}^{\overline{A}_{n1}}k^{d-1}\alpha_1^{1-4/q}(k)\right).  \label{Qn3}
\end{align}

Now we evaluate $Q_{n4}$ and $Q_{n5}$. For distinct indices $\bm{i}, \bm{j}, \bm{k}, \bm{p} \in I_n(\bm{\ell})$, let 
\begin{align*}
d_1(\bm{i}, \bm{j}, \bm{k}) &= \max\{d(\{\bm{i}\}, \{\bm{j}, \bm{k}\}),d(\{\bm{k}\}, \{\bm{i}, \bm{j}\})\},\\
d_2(\bm{i}, \bm{j}, \bm{k}, \bm{p}) &= \max\{d(J, \{\bm{i}, \bm{j}, \bm{k}, \bm{p}\}): J \subset \{\bm{i}, \bm{j}, \bm{k}, \bm{p}\}, [\![ J]\!] = 1\},\\
d_3(\bm{i}, \bm{j}, \bm{k}, \bm{p}) &= \max\{d(J, \{\bm{i}, \bm{j}, \bm{k}, \bm{p}\}): J \subset \{\bm{i}, \bm{j}, \bm{k}, \bm{p}\}, [\![ J]\!] = 2\}. 
\end{align*}
Here, $d_1$ denotes the maximal gap in the set of integer-indices $\{\bm{i},\bm{j},\bm{k}\}$ from either $\bm{j}$ or $\bm{k}$ which corresponds to $E\left[\tilde{V}_n^2(\bm{i})\tilde{V}_n(\bm{j})\tilde{V}_n(\bm{k})\right]$. Similarly, $d_2$ and $d_3$ are the maximal gap in the index set $\{\bm{i}, \bm{j}, \bm{k}, \bm{p}\}$ from any of its single index-subsets or two-index subsets, respectively. Applying the argument in the proof of Lemma 4.1 of \cite{La99}, for any given values $1 \leq d_{01}, d_{02}, d_{03} < [\![I_{n}(\bm{\ell})]\!]$, we have 
\begin{align}
&[\![\{(\bm{i}, \bm{j},\bm{k}) \in I_n^3(\bm{\ell}): \bm{i} \neq \bm{j} \neq \bm{k}\ \text{and}\ d_1(\bm{i},\bm{j},\bm{k})=d_{01}\}]\!] \lesssim d_{01}^{2d-1}[\![I_n(\bm{\ell})]\!],  \label{card-d01}\\
&[\![\{(\bm{i}, \bm{j},\bm{k}, \bm{p}) \in I_n^4(\bm{\ell}): \bm{i} \neq \bm{j} \neq \bm{k} \neq \bm{p},\ d_2(\bm{i},\bm{j},\bm{k}, \bm{p})=d_{02},\ \text{and}\ d_3(\bm{i},\bm{j},\bm{k}, \bm{p})=d_{03}\}]\!] \nonumber \\ 
&\quad \lesssim (d_{02} + d_{03})^{3d-1}[\![I_n(\bm{\ell})]\!].  \label{card-d023}
\end{align}

For $Q_{n4}$, by (\ref{card-d01}) and applying the same argument to show (\ref{Qn2}), we have
\begin{align}
Q_{n4} &\lesssim A_n^{(1)}\sum_{k=1}^{\overline{A}_{n1}} k^{2d-1}\alpha^{1-4/q}(\max\{k-d,0\};2)E[|\tilde{V}_n(\bm{i})|^q]^{2/q}E[|\tilde{V}_n(\bm{j})|^q]^{1/q}E[|\tilde{V}_n(\bm{k})|^q]^{1/q} \nonumber \\
&\lesssim A_n^{(1)}(nA_n^{-1})^4\left(1 + \sum_{k=1}^{\overline{A}_{n1}}k^{2d-1}\alpha_1^{1-4/q}(k)\right).  \label{Qn4}
\end{align}

Define
\begin{align*}
I_{n1}(\bm{\ell}) &= \{(\bm{i}, \bm{j},\bm{k}, \bm{p}) \in I_n^4(\bm{\ell}): \bm{i} \neq \bm{j} \neq \bm{k} \neq \bm{p},\ d_2(\bm{i},\bm{j},\bm{k}, \bm{p}) \geq d_3(\bm{i},\bm{j},\bm{k}, \bm{p})\},\\
I_{n2}(\bm{\ell}) &= \{(\bm{i}, \bm{j},\bm{k}, \bm{p}) \in I_n^4(\bm{\ell}): \bm{i} \neq \bm{j} \neq \bm{k} \neq \bm{p},\ d_2(\bm{i},\bm{j},\bm{k}, \bm{p}) < d_3(\bm{i},\bm{j},\bm{k}, \bm{p})\}.
\end{align*}

For $Q_{n5}$, by (\ref{card-d023}) and applying the same argument to show (\ref{Qn2}), we have
\begin{align}
Q_{n5} &= \sum_{(\bm{i}, \bm{j},\bm{k}, \bm{p}) \in I_{n1}(\bm{\ell})}\!\!\!\!\!\!E\left[\tilde{V}_n(\bm{i})\tilde{V}_n(\bm{j})\tilde{V}_n(\bm{k})\tilde{V}_n(\bm{p})\right] + \sum_{(\bm{i}, \bm{j},\bm{k}, \bm{p}) \in I_{n2}(\bm{\ell})}\!\!\!\!\!\!E\left[\tilde{V}_n(\bm{i})\tilde{V}_n(\bm{j})\tilde{V}_n(\bm{k})\tilde{V}_n(\bm{p})\right] \nonumber \\
&\lesssim A_n^{(1)}\sum_{k=1}^{\overline{A}_{n1}} k^{3d-1}\alpha^{1-4/q}(\max\{k-d,0\};3) \nonumber \\
&\quad \times E[|\tilde{V}_n(\bm{i})|^q]^{1/q}E[|\tilde{V}_n(\bm{j})|^q]^{1/q}E[|\tilde{V}_n(\bm{k})|^q]^{1/q}E[|\tilde{V}_n(\bm{p})|^q]^{1/q} \nonumber \\
&\quad + \left(\sum_{\bm{i}, \bm{j} \in I_n(\bm{\ell}), \bm{i} \neq \bm{j}}\left|E[\tilde{V}_n(\bm{i})\tilde{V}_n(\bm{j})]\right|\right)^2 + A_n^{(1)}\sum_{k=1}^{\overline{A}_{n1}} k^{3d-1}\alpha^{1-4/q}(\max\{k-d,0\};2) \nonumber \\
&\quad \times E[|\tilde{V}_n(\bm{i})|^q]^{1/q}E[|\tilde{V}_n(\bm{j})|^q]^{1/q}E[|\tilde{V}_n(\bm{k})|^q]^{1/q}E[|\tilde{V}_n(\bm{p})|^q]^{1/q} \nonumber \\
&\lesssim (A_n^{(1)})^2(nA_n^{-1})^4\left(1 + \sum_{k=1}^{\overline{A}_{n1}}k^{2d-1}\alpha_1^{1-4/q}(k)\right).  \label{Qn5}
\end{align}
Combining (\ref{Qn1}),  (\ref{Qn2}),  (\ref{Qn3}),  (\ref{Qn4}), and (\ref{Qn5}), we have
\begin{align*}
&\sum_{\bm{\ell} \in L_{n1}}E[\tilde{V}_n^4(\bm{\ell}; \bm{\Delta}_0)]\\ 
&= \sum_{\bm{\ell} \in L_{n1}}E\left[\left(\sum_{\bm{i} \in I_n(\bm{\ell})}\tilde{V}_n(\bm{i})\right)^4\right] \lesssim [\![L_{n1}]\!](A_n^{(1)})^2(nA_n^{-1})^4\left(1 + \sum_{k=1}^{\overline{A}_{n1}}k^{2d-1}\alpha_1^{1-4/q}(k)\right)\\
&\lesssim \left({A_n h_1 \dots h_d \over A_n^{(1)}}\right)(A_n^{(1)})^2(nA_n^{-1})^4\left(1 + \sum_{k=1}^{\overline{A}_{n1}}k^{2d-1}\alpha_1^{1-4/q}(k)\right) = o\left((n^2A_n^{-1}h_1 \dots h_d)^2\right). 
\end{align*}

(Step 2-3) Now we show (\ref{Vn2-var}) and (\ref{Vn3-var}). Define 
\begin{align*}
J_n &= \{\bm{i} \in \mathbb{Z}^d: (\bm{i} + (-1/2,1/2]^d) \cap \bm{h} R_n \neq \emptyset\},\ J_{n1} = \cup_{\bm{\ell} \in L_{n1}}I_{n}(\bm{\ell}),\\
J_{n2} &= \{\bm{i} \in J_n: \bm{i} + (-1/2,1/2]^d \subset \Gamma_n(\bm{\ell};\bm{\Delta})\ \text{for some}\ \bm{\ell} \in L_{n1}, \bm{\Delta} \neq \bm{\Delta}_0\},\ J_{n3} = J_n \backslash (J_{n1} \cup J_{n2}). 
\end{align*}
Note that $[\![J_{n2}]\!] \lesssim (\overline{A}_{n1})^{d-1}\overline{A}_{n2}\left({A_nh_1 \dots h_d \over A_n^{(1)}}\right)$ and $[\![J_{n3}]\!] \lesssim A_n^{(1)}\left({  \overline{A_nh} \over \underline{A}_{n1}}\right)^{d-1}$. Then, applying the same argument to show (\ref{Qn2}), we have
\begin{align*}
\Var(\tilde{V}_{n2}) &\lesssim [\![J_{n2}]\!](nA_n^{-1})^2\left(1 + \sum_{k=1}^{\overline{A}_{n1}}k^{d-1}\alpha_1^{1-2/q}(k)\right)\\
&\lesssim \left({\overline{A}_{n1} \over \underline{A}_{n1}}\right)^d\left({\overline{A}_{n2} \over \overline{A}_{n1}}\right)A_n h_1 \dots h_d (nA_n^{-1})^2\left(1 + \sum_{k=1}^{\overline{A}_{n1}}k^{d-1}\alpha_1^{1-2/q}(k)\right) = o\left( n^2A_n^{-1}h_1 \dots h_d\right). \\
\Var(\tilde{V}_{n3}) &\lesssim [\![J_{n3}]\!](nA_n^{-1})^2\left(1 + \sum_{k=1}^{\overline{A}_{n1}}k^{d-1}\alpha_1^{1-2/q}(k)\right)\\
&\lesssim\!  \left(\!{A_n^{(1)} \over \underline{A}_{n1}^d} \! \right)\!\! \left(\! {\left(\overline{A_n h}\right)^d \over A_n h_1 \dots h_d} \! \right)\!\!\left(\! \underline{A}_{n1} \over \overline{A_n h} \! \right) A_n h_1 \dots h_d (nA_n^{-1})^2 \!\left(\!\! 1 \!+\! \sum_{k=1}^{\overline{A}_{n1}}k^{d-1}\alpha_1^{1-2/q}(k) \!\! \right)\\
&= o\left( n^2A_n^{-1}h_1 \dots h_d\right).  
\end{align*}

(Step 2-4) Now we show (\ref{Vn-var-diff}). By (\ref{Vn2-var}) and (\ref{Vn3-var}), we have for sufficiently large $n$, 
\begin{align*}
E[\tilde{V}_{n1}^2] &= E[(\tilde{V}_n(\bm{0}) - (\tilde{V}_{n2} + \tilde{V}_{n3}))^2] \leq 2\left(E[(\tilde{V}_n(\bm{0}))^2] + E[(\tilde{V}_{n2} + \tilde{V}_{n3})^2]\right) \leq 4E[\tilde{V}_n^2(\bm{0})]. 
\end{align*}
Thus, by (\ref{Ga12-dist}), (\ref{Vn2-var}), and (\ref{Vn3-var}), we have
\begin{align*}
&\left|\sum_{\bm{\ell} \in L_{n1}}E[\tilde{V}_n^2(\bm{\ell}; \bm{\Delta}_0)] - E[\tilde{V}_n^2(\bm{0})]\right|\\
&\leq  \left|\sum_{\bm{\ell} \in L_{n1}}E[\tilde{V}_n^2(\bm{\ell}; \bm{\Delta}_0)] - E[\tilde{V}_{n1}^2]\right| + 2E[(\tilde{V}_{n2} + \tilde{V}_{n3})^2]^{1/2}E[\tilde{V}_{n1}^2]^{1/2} + E[(\tilde{V}_{n2} + \tilde{V}_{n3})^2]\\
&\lesssim \left(A_n^{(1)}nA_n^{-1}\right)^2 \!\!\sum_{\bm{\ell}_1 \neq \bm{\ell}_2} \!\! \alpha^{1-2/q}(\max\{|\bm{\ell}_1 \!-\! \bm{\ell}_2|-d, 0\}\underline{A}_{n3} \!+\! \underline{A}_{n2};A_n^{(1)}) \!+\! o\left(n^2A_n^{-1}h_1 \dots h_d\right)\\
&\lesssim \left(A_n^{(1)}nA_n^{-1}\right)^2\left({A_nh_1 \dots h_d \over A_n^{(1)}}\right)  \\ 
&\quad \times \left(\alpha^{1-2/q}(\underline{A}_{n2};A_n^{(1)}) + \sum_{k=1}^{\overline{A}_{n}/\underline{A}_{n1}}k^{d-1}\alpha^{1-2/q}(\max\{k-d, 0\}\underline{A}_{n3} + \underline{A}_{n2};A_n^{(1)})\right)\\ 
&\quad + o\left(n^2A_n^{-1}h_1 \dots h_d\right) = o\left(n^2A_n^{-1}h_1 \dots h_d\right),
\end{align*}
where $\overline{A}_n = \max_{1 \leq j \leq d}A_{n,j}$.

\noindent
(Step 3) In the supplementary material, we will show
\begin{align*}
B_{n,j_1\dots j_L}(\dot{\bm{X}}) &= g(\bm{0})\sqrt{A_nh_1 \dots h_d}(B^{(d,p)}\bm{M}_n^{(d,p)}(\bm{0}))_{j_1\dots j_L} + o_p(1). 
\end{align*}

\noindent
(Step 4) Combining the results in Steps 2 and 3, we have
\begin{align*}
A_n(\bm{0}) &:=V_n(\bm{0}) + \left(B_n(\bm{0}) - g(\bm{0})\sqrt{A_n h_1 \dots h_d}B^{(d,p)}M_n^{(d,p)}(\bm{0})\right) \\ 
&\stackrel{d}{\to} N\left( \left(
\begin{array}{c}
0 \\
\vdots \\
0
\end{array}
\right), g(\bm{0})\left\{\kappa(\eta^2(\bm{0}) + \sigma_\varepsilon^2(\bm{0}))+\eta^2(\bm{0})g(\bm{0})\int \sigma_{\bm{e}}(\bm{v})d\bm{v}\right\}\mathcal{K}\right).
\end{align*} 
This and the result in Step 1 yield the desired result. 
\end{proof}

\section{Supplement}

The supplement contains simulation results (Section \ref{sec:simulation}), discussion on our assumptions and possible extensions (Section \ref{sec:discuss}), two-sample test for spatially dependent data (Section \ref{sec:TST}), discussion on examples of random fields to which our theoretical results can be applied (Section \ref{sec:examples}),  proofs for Section \ref{sec:main} (Section \ref{Appendix: Sec4}), Section \ref{sec:LP-unif} (Section \ref{Appendix: Sec5}), Section \ref{sec:TST} (Section \ref{sec: TST-proof}), Section \ref{sec:examples} (Section \ref{Appendix: Sec6}), and a list of technical tools (Section \ref{Appendix: tool}).

\section{Simulation}\label{sec:simulation}

This section examines the finite sample properties of the local polynomial estimation
applied to simulated spatial data. Focusing on the local linear estimation with $p=2$ on
$\mathbb{R}^2$, we simulate spatially correlated data that includes mean function on irregularly spaced locations on a rectangular and fit a local linear function to confirm how the asymptotic normality established in Theorem 4.1 works empirically.

Let us introduce the simulation designs to simulate spatial data on the square of \[
R_n=\prod_{j=1}^2\left[-{A_{n,j}\over 2},{A_{n,j} \over 2}\right] \subset \mathbb{R}^2,\ A_{n,1}=A_{n,2}=10.
\]
Following the model in (\ref{eq:model}), we simulate them on $\bm{x} \in R_n$ by
\begin{align*}
    Y(\bm{x}_i)=m(\bm{x}_i/A_n)+e(\bm{x}_i)+\varepsilon_i, i=1,\ldots,n,
\end{align*}
where the mean function is designed on $[-1/2,1/2]^2$ by
\begin{align}
    m(x_1,x_2)=(10x_1+15)\cos(x_1+x_2+1), |x_1|<0.5, |x_2|<0.5
    \label{mean_func}
\end{align}
The next component $e$, the spatially correlated component, is designed by CAR(1) random fields of \cite{BrMa17}
driven by compound Poisson processes, which are described as
\begin{align*}
    e(\bm{x})&=\sum_{j}Z_je^{-\lambda||\bm{x}-\bm{a}_j||}, \bm{x}\in\mathbb{R}^2,\lambda>0,
\end{align*}
where $Z_j$ is i.i.d. Gaussian with mean 0 and variance $\tau^2$, while
$\{\bm{a}_j\}$, a set of knots over $\mathbb{R}^2$, 800 points of which
are designed in practice uniformly over $\prod_{j=1}^2[-A_{n,j}, A_{n,j}]$ to simulate the spatial component on $\prod_{j=1}^2[-A_{n,j}/2, A_{n,j}/2]$.
The final component $\varepsilon$ is Gaussian i.i.d. error with mean 0 and variance $\sigma^2$ that represents a measurement error.

We conduct here the simulations for the three cases of the error term with the mean function defined in (\ref{mean_func}). Specifically, they are given by:
\begin{enumerate}
\item[(i)] no spatial component of $e$ with Gaussian iid noise of $\varepsilon$ with mean 0 and variance $\sigma^2=1$,
\item[(ii)] spatial component of $\lambda=1, \tau^2=0.1^2$ with Gaussian iid noise with mean 0 and variance $\sigma^2=0.1^2$,
and \item[(iii)] spatial component of $\lambda=0.5, \tau^2=0.05^2$ with Gaussian iid noise with mean 0 and variance $\sigma^2=0.1^2$.
\end{enumerate}
Notice that (1) stands for independent error case, while the others are spatially correlated error cases in which (iii) is more correlated than (ii).

We conducted the local linear estimation at the origin $(0,0)$ 500 times for the three cases of simulated data
observed on 1000 uniformly distributed sampling points on $\prod_{j=1}^2[-A_{n,j}/2, A_{n,j}/2]$ with $A_{n,1}=A_{n,2}=10$,
where we employed $h_1=h_2=0.2$ for the bandwidth to conduct the local linear fit, while $h_1=h_2=0.25$ to estimate the asymptotic bias and $h_1=h_2=0.25$, $b_1=b_2=8$ to estimate the asymptotic variance in Theorem 4.1, with the product of triangular kernel and the Bartlett kernel given by
\begin{align*}
    K(x_1, x_2)=
        \left\{
    \begin{array}{ll}
    (1-|x_1|)(1-|x_2|),& |x_1|\leq 1, |x_2|\leq 1.\\
    0,&otherwise,
    \end{array}
    \right.
\end{align*}
\begin{align*}
    \bar{K}(x_1, x_2)=
        \left\{
    \begin{array}{ll}
    (1-\sqrt{x_1^2 + x_2^2}),& x_1^2 + x_2^2\leq 1.\\
    0,&otherwise.
    \end{array}
    \right.
\end{align*}
After normalizing the estimated intercept $\hat \beta_0 := \hat \beta_0(0,0)$ in the local linear estimation with the estimated bias and variance, {\it i.e.} as
\begin{align*}
\hat T=\frac{\hat\beta_0-\hat{bias}-\beta_0}{\sqrt{\hat{var}}},
\end{align*}
we list the  empirical mean, empirical variance, empirical coverage ratios by the 95\% confidence interval in Table \ref{table1}
and histogram in Figure \ref{fig1} of $\hat T$ for the cases of (i), (ii), and (iii), where one estimator in case (ii) smaller than $-10$ is excluded from the mean and variance evaluations as an outlier.
Notice that the mean and variance of $\hat T$ are asymptotically 0 and 1, respectively.

We find from Figure \ref{fig1} that the histograms for all three cases are well approximated by standard Normal distribution. The empirical coverage ratios are close to the asymptotic value of 0.95. We find, however, from Table \ref{table1} that empirical variance is greater than 1, the asymptotic value, for cases (ii) and (iii) together with some negative bias. The deviations come from underestimation for the variance estimator proposed in Section 5.1. The underestimation especially for case (ii) may be due to the unsatisfactory bandwidth choice. In other words, the bandwidth, which is fixed to be 0.25 for the bias and variance estimation, fails to estimate proper values in the sense of approximating the asymptotic distribution. It suggests that the bandwidth choice is difficult in practice, especially for the bias and variance estimation that has critical effects on the statistical inference performances.

There have been several ways proposed to select optimal bandwidth. One practical method is to find the value minimizing the mean squared error of the estimator, which is the summation of the squared bias and variance given in Remark 4.1. Since they include unknown quantities of derivatives of an unknown mean function, one more bandwidth is necessary to estimate the unknown quantities from which optimal bandwidth is fixed to minimize the mean squared error. In practice, several bandwidths should be tried for the estimation among which the optimal bandwidth should be selected to minimize the mean squared error.  

\begin{figure}
  \includegraphics[width=0.8\textwidth]{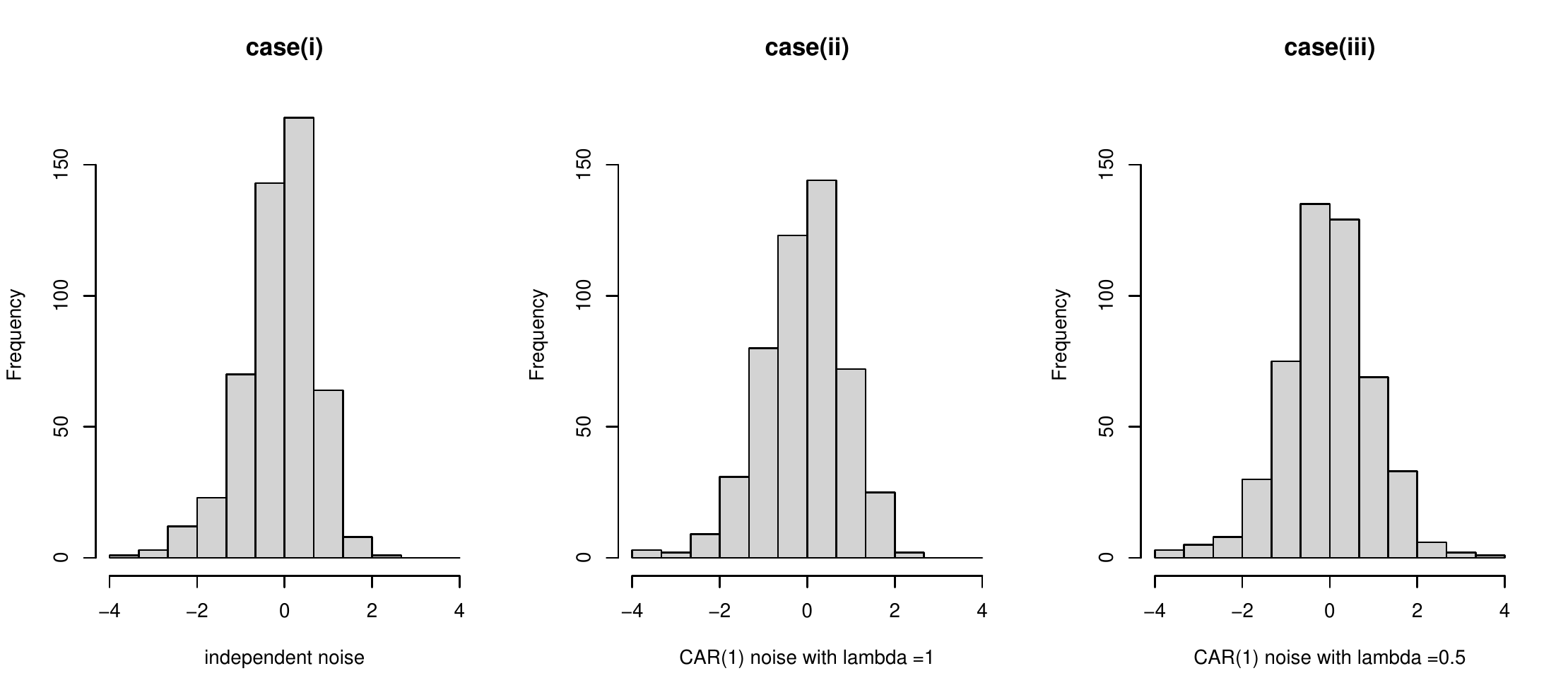}
  \caption{Histograms for the estimated intercepts normalized with the estimated bias and variance in the local linear fitting for the cases of (i) independent noise, (ii) CAR(1) noise with $\lambda=1$ and (iii) CAR(1) noise with $\lambda=0.5$.  }
    \label{fig1}
\end{figure}

\begin{table}[t]
 \centering
  \begin{tabular}{clll}
   \hline
   case & mean & variance & coverage \\
   \hline
   (i)   & $-0.199$ &1.012 &0.944\\
   (ii)  & $-0.179$ &1.517 &0.948\\
   (iii) & $-0.120$ &1.380 &0.940\\
   \hline
  \end{tabular}
 \caption{Empirical mean and variance of the estimated intercept in the local linear fitting together with the coverage ratio by 95\% confidence interval estimate for the cases of (i) independent noise, (ii) CAR(1) noise with $\lambda=1$ and (iii) CAR(1) noise with $\lambda=0.5$.}  
  \label{table1}
\end{table}

\section{Discussion}\label{sec:discuss}

In this section, we will discuss the conditions assumed for the main results as well as possible extensions.

\subsection{Discussion on the decay rate of mixing coefficients}

Assume that $\alpha(a;b) \leq a_1(a)\varpi_1(b)$ with $a_1(a) \lesssim a^{-r_1}$ and $\varpi_1(b) \lesssim b^{r_2}$ where $r_1$ and $r_2$ are positive constants. Moreover, let $A_{n,j} \sim n^{\zeta_{0}/d}$, $A_{n1,j} = A_{n,j}^{\zeta_1}$, $A_{n2,j} = A_{n1,j}^{\zeta_2}$, and $h_j \sim n^{-\zeta_3/d}$ where $\zeta_{0}$, $\zeta_{1}$, $\zeta_{2}$, and $\zeta_3$ are positive constants. Then, the assumptions of Theorem 4.1 are satisfied with  
\begin{align*}
\zeta_0 &\in \left(0, 1\right],\ \zeta_1 \in \left({2dp(1+r_2) \over (d+2p)(r_1+d)}, {2p+2 \over d + 2p+2}\right),\\ 
\zeta_2 &\in \left(\max\left\{{d(1+r_2(1-2/q)) \over r_1(1-2/q)}, \left({2p(1+r_2) \over \zeta_1(d+2p)}-1\right){d \over r_1}\right\}, 1\right), \\
\zeta_3 &\in \left(\max\left\{{\zeta_0(1+r_2 -\zeta_1(1+{r_1\zeta_2 \over d})) \over 1+r_2}, {\zeta_0 d \over d+2p+2}\right\}, \min\left\{\zeta_0(1-\zeta_1), {\zeta_0 d \over d+2p}\right\}\right).\\
r_1 &> \max\left\{{2d \over 1-4/q}, {d(2+r_2(1-2/q)) \over 1-2/q}, {d(p(d+2p+2)r_2 -d) \over (p+1)(d+2p)}\right\},\ r_2 >0.
\end{align*} 
Furthermore, assume that $\beta(a;b) \leq \beta_1(a)\varpi_2(b)$ with $\beta_1(a) \lesssim a^{-\bar{r}_1}$ and $\varpi_2(b) \lesssim b^{\bar{r}_2}$ where $s_1$ and $s_2$ are positive constants. Moreover, let $A_{n,j} \sim n^{\zeta_{0}/d}$, $A_{n1,j} = A_{n,j}^{\zeta_1}$, $A_{n2,j} = A_{n1,j}^{\zeta_2}$, and $h_j \sim n^{-\zeta_3/d}$ where $\zeta_{0}$, $\zeta_{1}$, $\zeta_{2}$, and $\zeta_3$ are positive constants. Then, the assumptions of Theorem 5.1 are satisfied with  
\begin{align*}
\zeta_0 &\in \left({2(1+{2 \over q_2}(\bar{r}_2 + {1 \over 2})) \over {\bar{r}_1 \over d} - 2\bar{r}_2} + {2 \over q_2}, 1\right],\ \zeta_1 \in \left({1 + {2 \over q_2}(\bar{r}_2 + {1 \over 2}) \over \zeta_0({\bar{r}_1 \over d} - 2\bar{r}_2)}, {1 \over 2} - {1 \over \zeta_0q_2}\right),\\ 
\zeta_2 &\in \left(\max\left\{{d(1+\zeta_0 \zeta_1(\bar{r}_2 - {1 \over 2})) \over \zeta_0 \zeta_1 \bar{r}_1}, {d(1+2\zeta_0 \zeta_1 \bar{r}_2 + {2 \over q_2}(\bar{r}_2 + {1 \over 2})) \over \zeta_0 \zeta_1 \bar{r}_1 }\right\}, 1\right), \\
\zeta_3 &\in \left(\max\left\{0, {1 + \zeta_0(\bar{r}_2 + {1 \over 2} - \zeta_1(1 + {\bar{r}_1 \zeta_2 \over d})) \over s_2 + {1 \over 2}}\right\}, \min\left\{\zeta_0(1-\zeta_1), \zeta_0(1-2\zeta_1) - {2 \over q_2}\right\}\right).\\
\bar{r}_1 &> 2d\left({q_2(1+{2 \over q_2}(\bar{r}_2 + {1 \over 2})) \over q_2 - 2}+\bar{r}_2\right),\ \bar{r}_2 >0.
\end{align*}

\subsection{Discussion on the definitions of $\alpha$- and $\beta$-mixing coefficients}

The definitions of the $\alpha$- and $\beta$-mixing coefficients are based on the argument in \cite{Br89}. It is crucial to restrict the size of the index sets $T_{1}$ and $T_{2}$ in the definition of $\alpha$- (or $\beta$-) mixing coefficients since no restrictions on $T_1$ and $T_2$ make the $\alpha$- and $\beta$-mixing be equivalent to $m$ dependent for a fixed $m>0$, which would not work for our asymptotic inference. Let us define the $\beta$-mixing coefficient of a random field $\bm{e}$ similarly to the time series as follows: For any subsets $T_{1}$ and $T_{2}$ of $\mathbb{R}^{d}$, the $\beta$-mixing coefficient between $\mathcal{F}_{\bm{e}}(T_1)$ and $\mathcal{F}_{\bm{e}}(T_2)$ is defined by $\tilde{\beta}(T_{1}, T_{2}) = \sup  {\textstyle \sum_{j=1}^{J}\sum_{k=1}^{K}}|P(A_{j}\cap B_{k}) - P(A_{j})P(B_{k})|/2$,
where the supremum is taken over all partitions $\{ A_j\}_{j=1}^{J} \subset \mathcal{F}_{\bm{e}}(T_1)$ and $\{ B_k\}_{k=1}^{K} \subset \mathcal{F}_{\bm{e}}(T_2)$ of $\mathbb{R}^d$. Let $\mathcal{O}_{1}$ and $\mathcal{O}_{2}$ be half-planes with boundaries $L_{1}$ and $L_{2}$, respectively. For each $a>0$, define $\beta(a) = \sup\{ \tilde{\beta}(\mathcal{O}_{1},\mathcal{O}_{2}) : d(\mathcal{O}_{1},\mathcal{O}_{2}) \geq a\}$. 
According to Theorem 1 in \cite{Br89}, if $\{e(\bm{x}): \bm{x} \in \mathbb{R}^2\}$ is strictly stationary, then $\beta(a) = 0$ or $1$ for $a >0$. This implies that if a random field $\bm{e}$ is $\beta$-mixing ($\lim_{a \to \infty}\beta(a)=0$), then it is automatically $m$ dependent, that is, $\beta(a)=0$ for some $a>m$, where $m$ is a positive constant. To allow a certain flexibility, we restrict the size of $T_1$ and $T_2$ in the definitions of $\alpha(a;b)$ and $\beta(a;b)$. We refer to \cite{Br93} and \cite{Do94} for more details on mixing coefficients for random fields.

\subsection{Discussion on $\beta$-mixing conditions}

\cite{La03b} established central limit theorems for weighted sample means of bounded spatial data under $\alpha$-mixing conditions. Lahiri's proof relies essentially on approximating the characteristic function of the weighted sample mean by that of independent blocks using the Volkonskii-Rozanov inequality (cf. Proposition 2.6 in \cite{FaYa03}) and then showing that
the characteristic function corresponding to the independent blocks converges to the characteristic function of its Gaussian limit. However, characteristic functions are difficult to capture the uniform behavior of LP estimators over a compact set so we rely on a different argument from that of \cite{La03b}. Indeed, we use a blocking argument tailored to $\beta$-mixing sequences (cf. Corollary 2.7 in \cite{Yu94}) and this enables us to compare the uniform convergence rates of LP estimators with that of a sum of independent blocks that approximates LP estimators. Another approach for handling spatial dependence is $m$-dependent approximation under a physical dependence structure (cf. \cite{MaVoWu13}), but this approach is designed for regularly spaced spatial data on $\mathbb{Z}^d$ and does not work in our framework. We also note that it is not known that the results corresponding to Corollary 2.7 in \cite{Yu94} hold for $\alpha$-mixing sequences; see Remark (ii) right after the proof of Lemma 4.1 in \cite{Yu94}.

\subsection{Discussion on our approach to prove main results}

If we use a result for proving a CLT without blocking, such as the result of Bolthausen (1982), as an alternative proof approach, the assumptions (Assumption 2.3(iii) and Assumption 4.1(v)) that arise from our blocking argument may become simpler. However, it is still expected that sufficient conditions for a CLT of LP estimators depending on the sample size, bandwidths, and spatial expansion rate $A_n$ will be required, as in Assumption (M) of \cite{LuTj14}, who used Bolthausen's technique to prove the CLT for kernel density estimator of stationary spatial processes. Furthermore, Bolthausen's result pertains to deterministic sampling sites that do not take into account stochastic sampling design. Therefore, how we can apply these results to our framework remains unclear. Even if it is applicable, the proof approach would be significantly different, we believe that additional substantial work would be necessary to derive detailed sufficient conditions for a CLT of our LP estimator in our framework.

\subsection{Discussion on the dependence structure of L\'evy-driven moving average random fields}

\cite{MaSt07} and \cite{ScSt12} have shown that for $d=1$ (continuous time process), a class of CARMA processes, which is a special class of L\'evy-driven moving average (MA) random fields, is exponentially $\beta$-mixing. From this, we expect that if the coefficients $\phi_{jk}$ decay exponentially fast, then the mixing coefficients $\alpha_1(a)$ (or $\beta_1(a)$) will decay (sub-)exponentially. However, it would be difficult to prove this in practice. This is because, first, the corresponding results for CARMA random fields on $d \geq 2$ are not known, and secondly, while examples of alpha-mixing Gaussian random fields on $\mathbb{Z}^d$ have been provided (see \cite{Do94} for example), there seem no general results known about the rate of decay of the alpha-mixing coefficients for Gaussian random fields on $\mathbb{R}^d$ that we are aware of.

One of the objectives of this paper is to demonstrate that a wide class of L\'evy-driven MA random fields satisfies the assumptions necessary for deriving the limiting theorem for our LP estimator. This class of random fields includes CARMA random fields as a special case and can handle a very broad class of random fields, not only Gaussian but also non-Gaussian ones (\cite{BrMa17}). Therefore, we leave the specific calculation of the mixing coefficient for L\'evy-driven MA random fields as future work.

\subsection{Construction of confidence surfaces of the mean function}

As an extension of Theorem 4.1, it is straightforward to show joint asymptotic normality of $\hat{m}$ over a finite number of design points $\{(\bm{z}_{\ell})\}_{\ell=1}^{L} \subset (-1/2,1/2)^d$ such that $\bm{z}_{\ell_1} \neq \bm{z}_{\ell_2}$ if $\ell_1 \neq \ell_2$ and verify that $\hat{m}(\bm{z}_{\ell})$ are asymptotically independent. Building on the result, we can construct a simple confidence surface by plug-in methods and linear interpolations of the following joint confidence intervals: Let $\xi_{1},\hdots, \xi_{L}$ be i.i.d. standard normal random variables, and let $\mathfrak{q}_{1-\tau}$ satisfy $P\left(\max_{1 \leq j \leq L}|\xi_{j}| > \mathfrak{q}_{1-\tau}\right) = \tau$ for $\tau \in (0,1)$ and take bandwidths $h_1,\dots, h_d$ that satisfies assumptions of Corollary 4.1 by replacing $\bm{0}$ with $\bm{z}_\ell$, $\ell=1,\dots, L$. 
Then, 
\[
\bar{C}_{n,\ell}(1-\tau) = \left[\hat{m}(\bm{z}_{\ell}) \pm \sqrt{{\hat{W}_{n}(\bm{z}_\ell) (S^{-1}\mathcal{K}S^{-1})_{11}\over A_nh_1\dots h_d}}\mathfrak{q}_{1-\tau}\right],\ \ell = 1,\hdots, L
\] 
are joint asymptotic $100(1-\tau)$\% confidence intervals of $m$. Here, we used the shorthand notation $[a \pm b] = [a-b,a+b]$ for $a \in \R$ and $b > 0$, and $(S^{-1}\mathcal{K}S^{-1})_{11}$ is the $(1,1)$-component of $S^{-1}\mathcal{K}S^{-1}$. More generally, We think there could be two possible ways to construct confidence bands of the regression function. The first way is based on a Gumbel approximation as considered in \cite{ZhWu08}, for example. For example, the second way is based on intermediate (high-dimensional) Gaussian approximations as considered in \cite{HoLe12}. However, we believe that both approaches require additional substantial work and as far as we could check, there would be no previous studies on the construction of uniform confidence surfaces for locally stationary random fields. Therefore, we leave the extension as a future research topic.

%\subsection{Discussion on our variance estimator of the LP estimators}
%Based on the results of our numerical experiments, we found that the estimator $\hat{V}_n(\bm{0})$ performs reasonably well. As a simpler estimator for the variance of the LP estimator, one may use the following estimator: 
%\begin{align*}
%\hat{W}_n(\bm{0}) &:= {(\kappa_0^{(2)})^{-1}\hat{V}_{n,3}(\bm{0}) \over \hat{g}^2(\bm{0})}
%\end{align*}
%where 
%\begin{align*}
%\hat{V}_{n,3}(\bm{0}) &= {1 \over n^2A_n^{-1}h_1 \dots h_d}\left(\sum_{i=1}^{n}K_{Ah}(\bm{X}_i)(Y(\bm{X}_i) - \hat{m}(\bm{X}_i/A_n))\right)^2,\\
%\hat{g}(\bm{0}) &= {1 \over nh_1 \dots h_d}\sum_{i=1}^{n}K_{Ah}(\bm{X}_i),
%\end{align*}
%It would be possible to investigate the theoretical properties of this estimator, although stronger assumptions would likely be necessary beyond those required for the consistency of $\hat{V}_n(\bm{0})$. Consequently, utilizing $\hat{V}_n(\bm{0})$ should suffice for the inference of the LP estimators.

\section{Two-sample test for spatially dependent data}\label{sec:TST}

In this section, we discuss a two-sample test for the partial derivatives of the mean function as an application of our main results. Focusing on local linear estimation with $p=1$ on $\mathbb{R}^2$,

Consider the following nonparametric regression model: 
\begin{align*}
Y_1(\bm{x}_{1,\ell_1}) &= m_1\left({\bm{x}_{1,\ell_1} \over A_n}\right) + \eta_1\left({\bm{x}_{1,\ell_1} \over A_n}\right)e_1(\bm{x}_{1,\ell_1}) + \sigma_{\varepsilon,1}\left({\bm{x}_{1,\ell_1} \over A_n}\right)\varepsilon_{1,\ell_1},\ \ell_1 = 1,\dots,n_1\\
Y_2(\bm{x}_{2,\ell_2}) &= m_2\left({\bm{x}_{2,\ell_2} \over A_n}\right) + \eta_2\left({\bm{x}_{2,\ell_2} \over A_n}\right)e_2(\bm{x}_{2,\ell_2}) + \sigma_{\varepsilon,2}\left({\bm{x}_{2,\ell_2} \over A_n}\right)\varepsilon_{2,\ell_2},\ \ell_2 = 1,\dots,n_2,
\end{align*}
where $\bm{x}_{1,\ell_1}, \bm{x}_{2,\ell_2} \in R_n$, $\bm{e} = \{e(\bm{x}) = (e_1(\bm{x}), e_2(\bm{x}))': \bm{x} \in \mathbb{R}^d\}$ is a bivariate stationary random field such that $E[e_k(\bm{0})] = 0$, $E[e_k^2(\bm{0})] = 1$, and $\{\varepsilon_{k,\ell_k}\}$ is a sequence of i.i.d. random variables such that $E[\varepsilon_{k,\ell_k}] = 0$, $k=1,2$. 

Assume that $\{\bm{x}_{k,\ell_k}\}$ are realizations of a sequence of random variables $\{\bm{X}_{k, \ell_k}\}$ with density $A_n^{-1}g_k(\cdot/A_n)$ where $g_k(\cdot)$ is a probability density function with support $[-1/2,1/2]^d$, $k=1,2$. This allows the sampling sites $\{\bm{x}_{1,\ell_1}\}$ and $\{\bm{x}_{2,\ell_2}\}$ to be different. 

\begin{assumption}\label{Ass:RF2}
The bivariate random field $\bm{e}$ satisfies the following conditions: 
\begin{itemize}
\item[(i)] $E[|e_k(\bm{0})|^{q_2}]<\infty$, $k=1,2$ for some integer $q_2 > 4$. 
\item[(ii)] Define $\Sigma_{\bm{e}}(\bm{x}) = (\sigma_{\bm{e},jk}(\bm{x}))_{1 \leq j,k \leq 2}$ where $\sigma_{\bm{e}, jk}(\bm{x})= E[e_j(\bm{0})e_k(\bm{x})]$, $j,k =1, 2$. Assume that $\sigma_{\bm{e}, kk}(\bm{0})=1$, $k=1,2$ and $\int_{\mathbb{R}^d} |\sigma_{\bm{e},jk}(\bm{v})|d\bm{v}<\infty$, $j,k=1,2$.
\item[(iii)] The random field $\bm{e}$ is $\alpha$-mixing with mixing coefficients $\alpha(a;b)\leq \alpha_1(a)\varpi_1(b)$ such that as $n \to \infty$, 
\begin{align*}
A_n^{(1)} \left(\alpha_1^{1-2/q}(\underline{A}_{n2}) + \sum_{k=\underline{A}_{n1}}^{\infty}k^{d-1}\alpha_1^{1-2/q}(k)\right)\varpi_1^{1-2/q}(A_n^{(1)}) \to 0, 
\end{align*}
where $q = \min\{q_1,q_2\}$, 
\begin{align*}
A_n^{(1)}=\prod_{j=1}^{d}A_{n1,j},\ \underline{A}_{n1}=\min_{1 \leq j \leq d}A_{n1,j},\ \underline{A}_{n2}=\min_{1 \leq j \leq d}A_{n2,j}.  
\end{align*}
Here, $\{A_{n1,j}\}_{n \geq 1}$ and $\{A_{n2,j}\}_{n \geq 1}$ are sequences of constants with $\min\left\{A_{n2,j},  {A_{n1,j} \over A_{n2,j}}\right\} \to \infty$ as $n \to \infty$, and $q_1$ is the integer that appear in Assumption 2.1 %\ref{Ass:mean-func}. 
\item[(iv)] $\{\bm{X}_{1,\ell_1}\}_{\ell_1 = 1}^{n_1}$, $\{\bm{X}_{2,\ell_2}\}_{\ell_2 = 1}^{n_2}$, $\bm{e}$, $\{\varepsilon_{1,\ell_1}\}_{\ell_1=1}^{n_1}$, and $\{\varepsilon_{2,\ell_2}\}_{\ell_2 = 1}^{n_2}$ are mutually independent. 
\end{itemize}
\end{assumption}

In Section \ref{sec:examples}, we give examples of bivariate random fields that satisfy Assumptions 4.1 %\ref{Ass:band} 
and \ref{Ass:RF2}. We note that a wide class of bivariate L\'evy-driven MA random fields satisfies our assumptions.

We are interested in testing the null hypothesis
\begin{align}\label{null-TST}
\mathbb{H}_{0, j_1 \dots j_L}: \partial_{j_1 \dots j_L}m_1(\bm{0}) - \partial_{j_1 \dots j_L}m_2(\bm{0}) = 0
\end{align}
against the alternative $\mathbb{H}_{1, j_1 \dots j_L}: \partial_{j_1 \dots j_L}m_1(\bm{0}) - \partial_{j_1 \dots j_L}m_2(\bm{0}) \neq 0$.

Define $\bm{M}_{k}(\bm{0})$ as $\bm{M}(\bm{0})$ with $m = m_{k}$ and $\overline{\bm{\beta}}_k(\bm{0})$ as LP estimators of order $p$ for $\bm{M}_k(\bm{0})$ computed by using $\{(Y_k(\bm{x}_{k, \ell_k}), \bm{x}_{k,\ell_k})\}$, bandwidths $h_1,\dots, h_d$, and a common kernel function $K$, $k=1,2$, respectively. The next theorem is a building block of the two-sample test (\ref{null-TST}). 

\begin{proposition}\label{prp: LP-TST}
Suppose Assumptions 2.2, 2.2 (i), 3.1, 4.1, %\ref{Ass:mean-func}, \ref{Ass:sample} (i), \ref{Ass:kernel},  \ref{Ass:band}, 
and \ref{Ass:RF2} hold with $m = m_k$, $\eta = \eta_k$, $\sigma_{\varepsilon} = \sigma_{\varepsilon,k}$, $\{\varepsilon_j\} = \{\varepsilon_{k,\ell_k}\}$, $g = g_k$, $k=1,2$. Moreover, assume that $n = n_1$, $n_1/n_2 \to \theta \in (0,\infty)$ as $n_1 \to \infty$ and $(\eta_1(\bm{0}), -\eta_2(\bm{0}))\left(\int \Sigma_{\bm{e}}(\bm{v})d\bm{v}\right)(\eta_1(\bm{0}), -\eta_2(\bm{0}))'\geq 0$. 
Then, as $n \to \infty$, 
\begin{align*}
&\sqrt{A_{n} h_1 \dots h_d}\left\{H\left((\overline{\bm{\beta}}_1(\bm{0}) - \overline{\bm{\beta}}_2(\bm{0}))- (\bm{M}_1(\bm{0}) - \bm{M}_2(\bm{0}))\right) - (\overline{B}_{n1}(\bm{0}) - \overline{B}_{n2}(\bm{0}))\right\}\\ 
&\quad \stackrel{d}{\to} N\left(\left(
\begin{array}{c}
0 \\
\vdots \\
0
\end{array}
\right), \left(\overline{V}_1(\bm{0}) + \overline{V}_2(\bm{0}) - 2\overline{V}_3(\bm{0}) \right)S^{-1}\mathcal{K}S^{-1}\right),
\end{align*}
where
\begin{align*}
\overline{B}_{n1}(\bm{0}) &= S^{-1}B^{(d,p)}\bm{M}_{n1}^{(d,p)}(\bm{0}),\ \overline{B}_{n2}(\bm{0}) = S^{-1}B^{(d,p)}\bm{M}_{n2}^{(d,p)}(\bm{0}),\\
\overline{V}_1(\bm{0}) &= \left({\kappa(\eta_1^2(\bm{0}) + \sigma_{\varepsilon,1}^2(\bm{0})) \over g_1(\bm{0})} + \eta_1^2(\bm{0})\int \sigma_{\bm{e},11}(\bm{v})d\bm{v}\right),\\ 
\overline{V}_2(\bm{0}) &= \left({\theta\kappa(\eta_2^2(\bm{0}) + \sigma_{\varepsilon,2}^2(\bm{0})) \over g_2(\bm{0})} + \eta_2^2(\bm{0})\int \sigma_{\bm{e},22}(\bm{v})d\bm{v}\right),\\
\overline{V}_3(\bm{0}) &= \eta_1(\bm{0})\eta_2(\bm{0})\int \sigma_{\bm{e},12}(\bm{v})d\bm{v},
\end{align*}
where $\bm{M}_{nk}^{(d,p)}(\bm{0})$ are defined as $\bm{M}_{n}^{(d,p)}(\bm{0})$ with $m = m_k$. 
\end{proposition}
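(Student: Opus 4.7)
The plan is to reduce the proposition to a joint central limit theorem for the ``variance terms'' of the two LP estimators, reusing the Taylor-expansion plus large-block--small-block decompositions used in the proof of Theorem \ref{thm: LP-CLT}, and then to compute a new cross-covariance contribution supplied by the bivariate random field $\bm{e}=(e_1,e_2)'$. For each $k\in\{1,2\}$, I apply the decomposition from the proof of Theorem \ref{thm: LP-CLT} with all quantities sub-scripted by $k$ to write
\begin{align*}
\sqrt{A_n h_1\cdots h_d}\,H(\overline{\bm{\beta}}_k(\bm{0})-\bm{M}_k(\bm{0}))=S_{n,k}^{-1}(\bm{0})(V_{n,k}(\bm{0})+B_{n,k}(\bm{0})),
\end{align*}
where $S_{n,k}, V_{n,k}, B_{n,k}$ are defined using $\{\bm{X}_{k,\ell_k}\}$, $e_k$ and $\{\varepsilon_{k,\ell_k}\}$. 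Step 1 of the proof of Theorem \ref{thm: LP-CLT} yields $S_{n,k}(\bm{0})\stackrel{p}{\to}g_k(\bm{0})S$, and Step 3 yields $B_{n,k}(\bm{0})=g_k(\bm{0})\sqrt{A_n h_1\cdots h_d}\,B^{(d,p)}\bm{M}_{nk}^{(d,p)}(\bm{0})+o_p(1)$; since $n_1/n_2\to\theta$ gives $A_n/n_2\to\theta\kappa$, these already account for $\overline{B}_{n1},\overline{B}_{n2}$ as well as the marginal variances $\overline{V}_1,\overline{V}_2$ once a joint CLT for $(V_{n,1}(\bm{0}),V_{n,2}(\bm{0}))$ is in hand. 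Hence it suffices to establish joint asymptotic normality of $(V_{n,1}(\bm{0}),V_{n,2}(\bm{0}))$ and to identify the limiting cross-covariance.

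I use the Cram\'er--Wold device. For arbitrary $\bm{t}_1,\bm{t}_2\in\mathbb{R}^D$, the statistic $W_n:=\bm{t}_1'V_{n,1}(\bm{0})+\bm{t}_2'V_{n,2}(\bm{0})$ is a weighted sum over the pooled sampling sites, and I run the large-block--small-block scheme of Step~2 of the proof of Theorem \ref{thm: LP-CLT} on $W_n$, now using the bivariate $\alpha$-mixing coefficients of Assumption \ref{Ass:RF2}(iii) in place of the univariate ones. The marginal pieces $\bm{t}_k'V_{n,k}(\bm{0})$ deliver their usual marginal variance contributions by Steps 2-1 through 2-4; the Volkonskii--Rozanov block-independence approximation, the Lyapounov fourth-moment bound, and the negligibility of the small blocks all transfer verbatim because within each spatial block the two samples' points are (conditionally on the designs) handled separately and the bivariate mixing bounds dominate the two univariate ones. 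The genuinely new calculation is therefore the cross-variance $E[\bm{t}_1'V_{n,1}(\bm{0})\,\bm{t}_2'V_{n,2}(\bm{0})]$.

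For this cross-variance, I condition on the mutually independent sampling designs $\{\bm{X}_{1,\ell_1}\}$ and $\{\bm{X}_{2,\ell_2}\}$. The independent measurement errors $\{\varepsilon_{1,\ell_1}\}$ and $\{\varepsilon_{2,\ell_2}\}$ (Assumption \ref{Ass:RF2}(iv)) contribute nothing, so the cross-variance reduces to a double sum weighted by $\sigma_{\bm{e},12}(\bm{X}_{1,\ell_1}-\bm{X}_{2,\ell_2})$. Following the treatment of $\sigma^2_{n,2}$ in Step 2-1 of the proof of Theorem \ref{thm: LP-CLT}, I factor the joint density $A_n^{-2}g_1(\cdot/A_n)g_2(\cdot/A_n)$, rescale $\bm{X}_{k,\ell_k}=A_n\bm{h}\circ\bm{z}_k$, substitute $\bm{v}=\bm{X}_{1,\ell_1}-\bm{X}_{2,\ell_2}$, truncate to $|\bm{v}|\leq M$, and invoke dominated convergence via $\int|\sigma_{\bm{e},12}(\bm{v})|d\bm{v}<\infty$ together with continuity of $g_k,\eta_k$ at $\bm{0}$ to obtain
\begin{align*}
E[\bm{t}_1'V_{n,1}(\bm{0})\,\bm{t}_2'V_{n,2}(\bm{0})]\to g_1(\bm{0})g_2(\bm{0})\eta_1(\bm{0})\eta_2(\bm{0})\left(\int\sigma_{\bm{e},12}(\bm{v})d\bm{v}\right)\bm{t}_1'\mathcal{K}\bm{t}_2.
\end{align*}
After pre- and post-multiplication by the limits $g_k(\bm{0})^{-1}S^{-1}$ of $S_{n,k}^{-1}(\bm{0})$, this cross-contribution equals $\bm{t}_1'\overline{V}_3(\bm{0})S^{-1}\mathcal{K}S^{-1}\bm{t}_2$. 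Taking $\bm{t}_2=-\bm{t}_1$ yields the stated limit covariance $(\overline{V}_1+\overline{V}_2-2\overline{V}_3)S^{-1}\mathcal{K}S^{-1}$, while the assumption $(\eta_1(\bm{0}),-\eta_2(\bm{0}))(\int\Sigma_{\bm{e}}(\bm{v})d\bm{v})(\eta_1(\bm{0}),-\eta_2(\bm{0}))'\geq 0$ guarantees positive semidefiniteness of this limit. Slutsky's theorem combined with the convergences of $S_{n,k}$ and $B_{n,k}$ then finishes the proof.

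The main obstacle is executing the large-block--small-block analysis cleanly for the pooled statistic $W_n$. Because the two samples share no sampling sites yet their spatial components $e_1,e_2$ are coupled, I must take the blocks as a common Euclidean partition of $R_n$ so that contributions from disjoint blocks are approximately independent under the bivariate $\alpha$-mixing of $\bm{e}$; auditing the fourth-moment bound analogous to the one in Step 2-2 of the proof of Theorem \ref{thm: LP-CLT} requires bookkeeping for cross-sample pair, triple, and quadruple interactions, though these ultimately reduce to the univariate bounds via one extra application of H\"older's inequality together with Assumption \ref{Ass:RF2}(i).
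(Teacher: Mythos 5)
Your proposal is correct and follows essentially the same route as the paper: both reduce the problem to the Theorem \ref{thm: LP-CLT} machinery applied to the pooled statistic, with the only genuinely new computation being the cross-covariance term that produces $\overline{V}_3(\bm{0})$ via $\int\sigma_{\bm{e},12}(\bm{v})d\bm{v}$. The paper works directly with the single difference statistic $\overline{W}_{n1}(\bm{0})-\overline{W}_{n2}(\bm{0})$ for one vector $\bm{t}$, whereas you first establish joint normality of $(V_{n,1},V_{n,2})$ by Cram\'er--Wold and then set $\bm{t}_2=-\bm{t}_1$; this is only a cosmetic difference and your variance and cross-covariance limits agree with the paper's.
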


An estimator of the asymptotic variance of the statistics $\overline{\bm{\beta}}_1(\bm{0}) - \overline{\bm{\beta}}_2(\bm{0})$ can be constructed as follows. For $\bm{z} \in (-1/2,1/2)^d$, let $\widehat{m}_k(\bm{z})$ be the LP estimator (of order $p$) of $m_k(\bm{z})$, $k=1,2$.

Define 
\begin{align*}
\overline{g}_{n_k}(\bm{0}) &= {1 \over n_kh_1 \dots h_d}\sum_{\ell_k=1}^{n_k}K_{Ah}(\bm{X}_{k,\ell_k}),\ k=1,2,\\
\overline{V}_{n,k}(\bm{0}) &= {A_n \over n_k^2h_1 \dots h_d}\sum_{\ell_{k,1}, \ell_{k,2}=1}^{n_k}K_{Ah}(\bm{X}_{k,\ell_{k,1}})K_{Ah}(\bm{X}_{k,\ell_{k,2}})\bar{K}_b(\bm{X}_{k,\ell_{k,1}} - \bm{X}_{k,\ell_{k,1}})\\
&\quad \times \left(Y_k(\bm{X}_{k,\ell_{k,1}}) - \widehat{m}_k(\bm{X}_{k,\ell_{k,1}}/A_n)\right)\left(Y_k(\bm{X}_{k,\ell_{k,2}}) - \widehat{m}_k(\bm{X}_{k,\ell_{k,2}}/A_n)\right),\ k=1,2,\\
\overline{V}_{n,3}(\bm{0}) &= {A_n \over n_1n_2h_1 \dots h_d}\sum_{\ell_1=1}^{n_1}\sum_{\ell_2=1}^{n_2}K_{Ah}(\bm{X}_{1,\ell_1})K_{Ah}(\bm{X}_{2,\ell_2})\bar{K}_b(\bm{X}_{1,\ell_1} - \bm{X}_{2,\ell_2})\\
&\quad \times \left(Y_1(\bm{X}_{1,\ell_1}) - \widehat{m}_{1}(\bm{X}_{1,\ell_1}/A_n)\right)\left(Y_2(\bm{X}_{2,\ell_2}) - \widehat{m}_{2}(\bm{X}_{2,\ell_2}/A_n)\right).
\end{align*}

\begin{proposition}\label{prp: var-est-TST}
Suppose that Assumptions 4.1, 5.1, 5.2 (i), 5.3 (iii), (iv), 5.4, 5.5, and \ref{Ass:RF2} hold with $\kappa=0$, $q_1 \geq q_2$, $m = m_k$, $\eta = \eta_k$, $\sigma_{\varepsilon} = \sigma_{\varepsilon,k}$, $\{\varepsilon_j\} = \{\varepsilon_{k,\ell_k}\}$, $g = g_k$, $k=1,2$ and with $\alpha$-mixing coefficients replaced by $\beta$-mixing coefficients. Moreover, assume that $n = n_1$, $n_1/n_2 \to \theta \in (0,\infty)$ as $n_1 \to \infty$ and $(\eta_1(\bm{0}), -\eta_2(\bm{0}))\left(\int \Sigma_{\bm{e}}(\bm{v})d\bm{v}\right)(\eta_1(\bm{0}), -\eta_2(\bm{0}))'\geq 0$. Then, as $n \to \infty$, the following result holds:
\begin{align*}
\check{V}_n(\bm{0}) &:= {\!\overline{V}_{n,1}(\bm{0})/\kappa_0^{(2)} \over \overline{g}_{n_1}^2(\bm{0})\!} \!+\! {\overline{V}_{n,2}(\bm{0})/\kappa_0^{(2)} \over \overline{g}_{n_2}^2(\bm{0})}- 2{(\overline{V}_{n,3}(\bm{0})/\kappa_0^{(2)}) \over \overline{g}_{n_1}(\bm{0})\overline{g}_{n_2}(\bm{0})}\\ 
&\stackrel{p}{\to} \overline{V}_1(\bm{0}) + \overline{V}_2(\bm{0}) - 2\overline{V}_3(\bm{0}).
\end{align*}
\end{proposition}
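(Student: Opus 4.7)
The plan is to decompose $\check{V}_n(\bm{0})$ into its three components and treat each one separately. The diagonal pieces $\overline{V}_{n,k}(\bm{0})/(\kappa_0^{(2)}\overline{g}_{n_k}^2(\bm{0}))$ can be dispatched by applying Proposition \ref{prp: var-est} to each sample $(Y_k,\bm{x}_{k,\cdot})$, $k=1,2$, separately; since $\kappa=0$ under the present hypotheses, this yields convergence in probability to $\eta_k^2(\bm{0})\int \sigma_{\bm{e},kk}(\bm{v})\,d\bm{v}=\overline{V}_k(\bm{0})$ and delivers the first two terms of the target limit.

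The bulk of the new work lies in analysing the cross term $\overline{V}_{n,3}(\bm{0})$. First, I would replace the residuals $Y_k(\bm{X}_{k,\ell_k})-\widehat{m}_k(\bm{X}_{k,\ell_k}/A_n)$ by the oracle residuals $\widetilde{Y}_{k,\ell_k} := \eta_k(\bm{X}_{k,\ell_k}/A_n)e_k(\bm{X}_{k,\ell_k})+\sigma_{\varepsilon,k}(\bm{X}_{k,\ell_k}/A_n)\varepsilon_{k,\ell_k}$, showing via the uniform rate in Theorem \ref{thm:LPR-unif} (taken with $L=0$) that the contribution of $\widehat{m}_k-m_k$ is asymptotically negligible on the support of $K_{Ah}$. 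Then, conditioning on $\{\bm{X}_{k,\ell_k}\}$ and using that the $\varepsilon$'s have zero mean and are independent of each other and of $\bm{e}$, only the pairing $E[e_1(\bm{X}_{1,\ell_1})e_2(\bm{X}_{2,\ell_2})]=\sigma_{\bm{e},12}(\bm{X}_{1,\ell_1}-\bm{X}_{2,\ell_2})$ survives in the mean. A change of variables $\bm{z}_k=\bm{X}_{k,\ell_k}/(A_n\bm{h})$ and $\bm{v}=\bm{X}_{1,\ell_1}-\bm{X}_{2,\ell_2}$, combined with $b_j/(A_{n,j}h_j)\to 0$, $\bar{K}(\bm{0})=1$, continuity of $\eta_k$ and $g_k$ at $\bm{0}$, and dominated convergence based on $\int|\sigma_{\bm{e},12}(\bm{v})|\,d\bm{v}<\infty$, should yield
\[
E[\widetilde{V}_{n,3}(\bm{0})] \to \kappa_0^{(2)}\,\eta_1(\bm{0})\eta_2(\bm{0})\,g_1(\bm{0})g_2(\bm{0})\int \sigma_{\bm{e},12}(\bm{v})\,d\bm{v}.
\]

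Next, I would verify $\Var(\widetilde{V}_{n,3}(\bm{0}))\to 0$. The variance expands into a quadruple sum over pairs $((\ell_1,\ell_2),(\ell_1',\ell_2'))$ and, by independence of the two $\bm{X}$-samples and of the $\varepsilon$'s, reduces to covariances of products $e_1(\bm{X}_{1,\cdot})e_2(\bm{X}_{2,\cdot})$. The Bartlett truncation $\bar{K}_b$ restricts the contributing pairs to those within distance $b$, controlling the combinatorics, while Assumption \ref{Ass:RF2}(iii) together with the $q_2$-th moment bound on $e_k$ supplies the requisite covariance decay; the mechanics mirror the single-sample variance calculation underlying Proposition \ref{prp: var-est}, with the bivariate mixing coefficient of $\bm{e}$ playing the role previously played by the univariate one. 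Finally, $\overline{g}_{n_k}(\bm{0})\stackrel{p}{\to}g_k(\bm{0})$ by standard kernel density consistency (as in Step~1 of the proof of Theorem \ref{thm: LP-CLT}), and Chebyshev together with Slutsky's theorem assembles the cross piece into $\overline{V}_3(\bm{0})$; adding the three limits yields the claim.

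The principal obstacle is the variance bound for the cross term $\widetilde{V}_{n,3}(\bm{0})$: although the Bartlett truncation and the joint mixing of $(e_1,e_2)$ play in our favour, one must carefully track how the two independent $\bm{X}$-samples interact with the bivariate dependence structure of $\bm{e}$ across four indices, and this step does not reduce to a direct citation of the single-sample result of Proposition \ref{prp: var-est}.
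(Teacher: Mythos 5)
Your proposal is correct and follows essentially the same route as the paper, whose own proof simply invokes ``the same argument as in the proof of Proposition 5.1'' for all three terms (including the cross term $\overline{V}_{n,3}$) and then combines the limits with $\overline{g}_{n_k}(\bm{0})\stackrel{p}{\to}g_k(\bm{0})$. Your more detailed treatment of the cross term — oracle residuals via the uniform rates, surviving pairing $\sigma_{\bm{e},12}$, change of variables with $b_j/(A_{n,j}h_j)\to 0$, and the variance bound via the Bartlett truncation and joint mixing — is precisely what that cited argument amounts to.
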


Define the test statistics
\begin{align*}
T_{n, j_1 \dots j_L} := {\sqrt{ A_n h_1 \dots h_d \left(\prod_{\ell=1}^{L}h_{j_\ell}\right)^2}\left(\partial_{j_1 \dots j_L}\widehat{m}_1(\bm{0}) - \partial_{j_1 \dots j_L}\widehat{m}_2(\bm{0})\right) \over \sqrt{\check{V}_n(\bm{0})\left(\bm{s}_{j_1\dots j_L}!\right)^2\left(e'_{j_1 \dots j_L}S^{-1}\mathcal{K}S^{-1}e_{j_1 \dots j_L}\right)}}.
\end{align*}

The asymptotic properties of the test statistics under both null and alternative hypotheses are given as follows: 
\begin{corollary}\label{cor: LP-CI-TST}
Let $\tau \in (0,1/2)$. Assume that $n = n_1$, $n_1/n_2 \to \theta \in (0,\infty)$ as $n_1 \to \infty$ and $(\eta_1(\bm{0}), -\eta_2(\bm{0}))\left(\int \Sigma_{\bm{e}}(\bm{v})d\bm{v}\right)(\eta_1(\bm{0}), -\eta_2(\bm{0}))'\geq 0$. Under the assumptions of Proposition \ref{prp: var-est-TST} with 
\[
A_n h_1 \dots h_d \left((S^{-1}e_{j_1\dots j_L})'B^{(d,p)}M_{n1}^{(d,p)}(\bm{0}) \right)^2 \to 0,\ n \to \infty,
\] 
we have $\lim_{n \to \infty}P(|T_{n,j_1 \dots j_L}| \geq q_{1-\tau/2}) = \tau$ under $\mathbb{H}_{0, j_1 \dots j_L}$ and $\lim_{n \to \infty}P(|T_{n,j_1 \dots j_L}| \geq q_{1-\tau/2}) = 1$ under $\mathbb{H}_{1, j_1 \dots j_L}$, where $q_{1-\tau}$ is the $(1-\tau)$-quantile of the standard normal random variable.
\end{corollary}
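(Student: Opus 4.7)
The plan is to derive both conclusions by reading off the $(j_1\dots j_L)$-coordinate from the multivariate CLT in Proposition 6.1, converting the bias assumption into an asymptotic negligibility statement, and then applying Slutsky's theorem together with the consistency result in Proposition 6.2.

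First I would unpack the relevant coordinate. By the definitions of $\bm{M}_k(\bm{0})$ and the diagonal scaling matrix $H$, the entry of $H(\overline{\bm{\beta}}_k(\bm{0})-\bm{M}_k(\bm{0}))$ indexed by $(j_1,\dots,j_L)$ equals
\[
\frac{\prod_{\ell=1}^{L}h_{j_\ell}}{\bm{s}_{j_1\dots j_L}!}\bigl(\partial_{j_1\dots j_L}\widehat{m}_k(\bm{0})-\partial_{j_1\dots j_L}m_k(\bm{0})\bigr),
\]
and $e'_{j_1\dots j_L}(\overline{B}_{n1}(\bm{0})-\overline{B}_{n2}(\bm{0}))=(S^{-1}e_{j_1\dots j_L})'B^{(d,p)}(\bm{M}_{n1}^{(d,p)}(\bm{0})-\bm{M}_{n2}^{(d,p)}(\bm{0}))$. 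Multiplying Proposition 6.1 by this coordinate projection and noting that, by the stated assumption and the analogous bound for $m_2$ (which follows from $m_2$ being $(p+1)$-times continuously differentiable and the bandwidths being shared), the bias term $\sqrt{A_nh_1\dots h_d}\,(S^{-1}e_{j_1\dots j_L})'B^{(d,p)}(\bm{M}_{n1}^{(d,p)}(\bm{0})-\bm{M}_{n2}^{(d,p)}(\bm{0}))$ is $o(1)$. Hence
\[
\sqrt{A_n h_1\dots h_d \prod_{\ell=1}^{L}h_{j_\ell}^2}\,\bigl(\partial_{j_1\dots j_L}\widehat{m}_1(\bm{0})-\partial_{j_1\dots j_L}\widehat{m}_2(\bm{0})-(\partial_{j_1\dots j_L}m_1(\bm{0})-\partial_{j_1\dots j_L}m_2(\bm{0}))\bigr)
\]
converges in distribution to a centered normal with variance $(\bm{s}_{j_1\dots j_L}!)^2(\overline{V}_1+\overline{V}_2-2\overline{V}_3)\,e'_{j_1\dots j_L}S^{-1}\mathcal{K}S^{-1}e_{j_1\dots j_L}$.

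Under $\mathbb{H}_{0,j_1\dots j_L}$ the deterministic mean shift vanishes. Proposition 6.2 gives $\check{V}_n(\bm{0})\stackrel{p}{\to}\overline{V}_1+\overline{V}_2-2\overline{V}_3$, a strictly positive limit (by the semi-definiteness condition $(\eta_1(\bm{0}),-\eta_2(\bm{0}))\int\Sigma_{\bm{e}}(\bm{v})d\bm{v}(\eta_1(\bm{0}),-\eta_2(\bm{0}))'\geq 0$ and the $\kappa$-term contributions). Slutsky's theorem then yields $T_{n,j_1\dots j_L}\stackrel{d}{\to}N(0,1)$, and the continuous mapping theorem applied to $|\cdot|$ gives $P(|T_{n,j_1\dots j_L}|\geq q_{1-\tau/2})\to\tau$. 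Under $\mathbb{H}_{1,j_1\dots j_L}$ I would decompose $T_{n,j_1\dots j_L}$ as the sum of the standardized centered statistic (which is $O_p(1)$ by the previous step) and the deterministic shift
\[
\frac{\sqrt{A_n h_1\dots h_d \prod_{\ell=1}^{L}h_{j_\ell}^2}\,\bigl(\partial_{j_1\dots j_L}m_1(\bm{0})-\partial_{j_1\dots j_L}m_2(\bm{0})\bigr)}{\sqrt{\check{V}_n(\bm{0})(\bm{s}_{j_1\dots j_L}!)^2 e'_{j_1\dots j_L}S^{-1}\mathcal{K}S^{-1}e_{j_1\dots j_L}}}.
\]
By Assumption 4.1(iii), $A_n h_1\dots h_d \prod_{\ell=1}^{L}h_{j_\ell}^2\to\infty$ (removing factors of $h_j\to 0$ only enlarges the product, so the case $L\le p$ follows from the stated case $L=p$), and the denominator converges to a positive constant, so this deterministic piece diverges in absolute value; hence $|T_{n,j_1\dots j_L}|\to\infty$ in probability and the power tends to $1$.

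The main obstacle, such as it is, lies not in novel probabilistic argument but in two pieces of bookkeeping: verifying that the asymptotic-bias assumption controls both sample bias terms (which requires imposing or noting the symmetric condition for $\bm{M}_{n2}^{(d,p)}(\bm{0})$), and checking positivity of the limiting variance $\overline{V}_1+\overline{V}_2-2\overline{V}_3$ so that Slutsky's theorem yields a proper $N(0,1)$ limit. All other steps reduce to projecting the vector CLT, applying continuous mapping, and invoking Proposition 6.2.
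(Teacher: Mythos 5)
Your proposal is correct and takes essentially the same route as the paper: the paper's own proof of this corollary is a single sentence deferring to Propositions \ref{prp: LP-TST} and \ref{prp: var-est-TST}, and your argument (coordinate projection of the vector CLT, negligibility of the bias under the undersmoothing condition, Slutsky with the consistent variance estimator, and divergence of the deterministic shift under the alternative) is exactly the expansion of that deferral. The two bookkeeping points you flag --- that the bias condition is stated only for $\bm{M}_{n1}^{(d,p)}(\bm{0})$ and a symmetric condition for $\bm{M}_{n2}^{(d,p)}(\bm{0})$ is implicitly needed, and that strict positivity of $\overline{V}_1(\bm{0})+\overline{V}_2(\bm{0})-2\overline{V}_3(\bm{0})$ (not merely the assumed nonnegativity of the quadratic form, especially since Proposition \ref{prp: var-est-TST} takes $\kappa=0$) is required for the $N(0,1)$ limit --- are legitimate gaps that the paper's one-line proof also leaves unaddressed.
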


\section{Examples}\label{sec:examples}

In this section, we discuss examples of random fields to which our theoretical results can be applied. To this end, we consider L\'evy-driven moving average (MA) random fields and discuss their dependence structure.  L\'evy-driven MA random fields include many Gaussian and non-Gaussian random fields and constitute a flexible class of models for spatial data.  We refer to \cite{Be96} and \cite{Sa99} for standard references on L\'evy processes, and \cite{RaRo89} and \cite{Ku22a} for details on the theory of infinitely divisible measures and fields. In particular, we show that a broad class of L\'evy-driven MA random fields, which includes continuous autoregressive and moving average (CARMA) random fields as special cases (cf. \cite{BrMa17}), satisfies our assumptions. 

For the two-sample test discussed in Section 4, %\ref{sec:main}, 
we considered nonparametric regression models for spatial data $\{Y_1(\bm{x}_{1,\ell_1}), Y_2(\bm{x}_{2,\ell_2})\}$ with bivariate random field $\bm{e} = \{e(\bm{x})=(e_1(\bm{x}),e_2(\bm{x}))':\bm{x} \in \mathbb{R}^d\}$. Hence, we give examples of bivariate random fields that satisfy Assumptions 4.1 %\ref{Ass:band} 
and \ref{Ass:RF2}. The examples of univariate random fields that satisfy Assumptions 2.3 and 4.1, %\ref{Ass:RF} and \ref{Ass:band}, 
and Assumption 5.3 %\ref{Ass:RF-unif} 
can be given as special class of bivariate cases. Indeed, for univariate cases, it is sufficient to consider the first component of the examples of bivariate random fields.

Let $\bm{L} = \{\bm{L}(A) = (L_1(A), L_2(A))': A \in \mathcal{B}(\mathbb{R}^d)\}$ be an $\mathbb{R}^2$-valued random measure on the Borel subsets $\mathcal{B}(\R^{d})$ that satisfies the following conditions: 
\begin{itemize}
\item[1.] For each sequence $\{A_m\}_{m \geq 1}$ of disjoint sets in $\mathbb{R}^{d}$,  
\begin{itemize}
\item[(a)] $\bm{L}(\cup_{m \geq 1}A_m) = \sum_{m \geq 1}\bm{L}(A_m)$ a.s. whenever $\cup_{m \geq 1}A_m \in \mathcal{B}(\mathbb{R}^d)$, 
\item[(b)] $\{\bm{L}(A_m)\}_{m \geq 1}$ is a sequence of independent random variables. 
\end{itemize}
\item[2.] For every Borel subset $A$ of $\mathbb{R}^{d}$ with finite Lebesgue measure $|A|$, $\bm{L}(A)$ has an infinitely divisible distribution, that is, 
\begin{align}\label{Levy-exp}
E[\exp(\mathrm{i}\bm{\theta}' \bm{L}(A))] &= \exp(|A|\psi(\bm{\theta})),\ \bm{\theta} \in \mathbb{R}^2,
\end{align}
where $\mathrm{i} = \sqrt{-1}$ and $\psi$ is the logarithm of the characteristic function of an $\mathbb{R}^2$-valued infinitely divisible distribution, which is given by 
\begin{align*}
\psi(\bm{\theta}) &= \mathrm{i} \bm{\theta}'\bm{\gamma}_{0} - {1 \over 2}\bm{\theta}'\Sigma_0 \bm{\theta} + \int_{\mathbb{R}^2}\left\{e^{\mathrm{i}\bm{\theta}' \bm{x}}-1-\mathrm{i} \bm{\theta}' \bm{x} 1_{\{\|\bm{x}\| \leq 1\}}\right\}\nu_{0}(d\bm{x}),
\end{align*}
where $\bm{\gamma}_0 = (\gamma_{0,1}, \gamma_{0,2})' \in \mathbb{R}^2$, $\Sigma_0 = (\sigma_{0,jk})_{1 \leq j,k \leq 2}$ is a $2\times 2$ positive semi-definite matrix, and $\nu_{0}$ is a L\'evy measure with $\int_{\mathbb{R}^2}\min\{1,\|\bm{x}\|^{2}\}\nu_{0}(d\bm{x})<\infty$. If $\nu_{0}(d\bm{x})$ has a Lebesgue density, i.e., $\nu_{0}(d\bm{x}) = \nu_{0}(\bm{x})d\bm{x}$, we call $\nu_{0}(\bm{x})$ as the L\'evy density. The triplet $(\bm{\gamma}_0, \Sigma_0, \nu_{0})$ is called the L\'evy characteristic of $\bm{L}$ and uniquely determines the distribution of $\bm{L}$. 
\end{itemize}

By equation (\ref{Levy-exp}), the first and second moments of the random measure $L$ are determined by
\[
E[L_j(A)] = \mu_j^{(\bm{L})}|A|,\ \Cov(L_j(A), L_k(A)) = \sigma_{j,k}^{(\bm{L})}|A|,
\] 
where $\mu_j^{(\bm{L})} = -\mathrm{i} {\partial \psi(\bm{0}) \over \partial \theta_j}$ and $\sigma_{j,k}^{(\bm{L})} = -{\partial^{2}\psi(\bm{0}) \over \partial \theta_j \partial \theta_k}$. 

The following are a couple of examples of L\'evy random measures. 
\begin{itemize}
\item If $\psi(\bm{\theta}) = -\bm{\theta}' \Sigma_0^2 \bm{\theta}/2$ with a $2 \times 2$ positive semi-definite matrix $\Sigma_0$, then $\bm{L}$ is a Gaussian random measure. 

\item If $\psi(\bm{\theta}) = \lambda \int_{\mathbb{R}^2}(\exp(\mathrm{i}\bm{\theta}' \bm{x}) - 1)F(d\bm{x})$, where $\lambda>0$ and $F$ is a probability distribution function with no jump at the origin, then $\bm{L}$ is a compound Poisson random measure with intensity $\lambda$ and jump size distribution $F$. More specifically, 
\begin{align*}
\bm{L}(A) &= \sum_{i=1}^{\infty}\bm{J}_{i}1_{\{\bm{s}_{i}\}}(A),\ A \in \mathcal{B}(\R^{d}),
\end{align*}
where $\bm{s}_{i}$ denotes the location of the $i$th unit point mass of a Poisson random measure on $\mathbb{R}^{d}$ with intensity $\lambda>0$ and $\{\bm{J}_{i}\}$ is a sequence of  i.i.d. random vectors in $\mathbb{R}^2$ with distribution function $F$ independent of $\{\bm{s}_{i}\}$. 
\end{itemize}

Let $\bm{\phi} = (\phi_{j,k})_{1 \leq j,k \leq 2}$ be a measurable function on $\mathbb{R}^{d}$ with $\phi_{j,k} \in L^{1}(\R^{d}) \cap L^{\infty}(\R^{d})$.  A bivariate L\'evy-driven MA random field with kernel $\bm{\phi}$ driven by a L\'evy random measure $\bm{L}$ is defined by
\begin{align}\label{CARMA}
\bm{e}(\bm{x}) &= \int_{\mathbb{R}^{d}}\bm{\phi}(\bm{x} - \bm{u})\bm{L}(d\bm{u}),\ \bm{x} \in \mathbb{R}^{d}. 
\end{align}
Define $\bm{\mu}_{\bm{L}} = (\mu_1^{(\bm{L})},\mu_2^{(\bm{L})})'$ and $\Sigma_{\bm{L}} = (\sigma_{j,k}^{(\bm{L})})_{1 \leq j,k \leq 2}$. The first and second moments of $\bm{e}(\bm{x})$ satisfy
\begin{align*}
E[\bm{e}(\bm{0})] &= \bm{\mu}_{(L)}\int_{\mathbb{R}^{d}}\bm{\phi}(\bm{u})d\bm{u},\ \Cov(\bm{e}(\bm{0}), \bm{e}(\bm{x})) = \int_{\mathbb{R}^{d}}\bm{\phi}(\bm{x}-\bm{u})\Sigma_{\bm{L}}\bm{\phi}(\bm{u})d\bm{u}.
\end{align*}
We refer to \cite{BrMa17} for more details on the computation of moments of L\'evy-driven MA processes.

Before discussing theoretical results, we look at some examples of univariate random fields defined by (\ref{CARMA}). Let $a_*(z) = z^{p_0} + a_{1}z^{p_0-1} + \cdots+a_{p_0} = \prod_{i=1}^{p_0}(z - \lambda_{i})$ be a polynomial of degree $p_0$ with real coefficients and distinct negative zeros $\lambda_{1},\hdots,\lambda_{p_0}$, and let $b_*(z) = b_{0} + b_{1}z + \cdots + b_{q_0}z^{q_0} = \prod_{i=1}^{q_0}(z - \xi_{i})$ be a polynomial of degree $q_0$ with real coefficients and real zeros $\xi_{1},\hdots, \xi_{q_0}$ such that $b_{q_0}=1$ and $0\leq q_0 < p_0$ and $\lambda_{i}^{2} \neq \xi_{j}^{2}$ for all $i$ and $j$. Define $a(z) = \prod_{i=1}^{p_0}(z^{2} - \lambda_{i}^{2})$ and $b(z) = \prod_{i=1}^{q_0}(z^{2} - \xi_{i}^{2})$. Then, the L\'evy-driven MA random field driven by an infinitely divisible random measure $L$ with 
\begin{align*}\label{CARMA_kernel_general}
\phi(\bm{x}) = \sum_{i=1}^{p_0}{b(\lambda_{i}) \over a'(\lambda_{i})}e^{\lambda_{i}\|\bm{x}\|},
\end{align*}
where $a'$ denotes the derivative of the polynomial $a$, is called a univariate (isotropic) CARMA($p_0,q_0$) random field. For example, if the L\'evy random measure of a CARMA random field is compound Poisson, then the resulting random field is called a compound Poisson-driven CARMA random field. In particular, when
\[
\phi(\bm{x}) = (1-\varsigma)\exp(\lambda_{1}\|\bm{x}\|) + \varsigma\exp(\lambda_{2}\|\bm{x}\|),
\]
where $\varsigma$ is a parameter that satisfies 
\[
-{\lambda_{2}^{2} - \xi^{2}\lambda_{1} \over \lambda_{1}^{2} - \xi^{2}\lambda_{2}} = {\varsigma \over 1-\varsigma},\ \lambda_{1}<\lambda_{2}<0,\ \xi \leq 0,
\]
then the random field (\ref{CARMA}) is called a CARMA($2,1$) random field. This random field includes normalized CAR($1$) (when $\varsigma=0$) and CAR($2$) (when $\varsigma = -\lambda_{1}/(\lambda_{2} - \lambda_{1})$) as special cases. See \cite{BrMa17} for more details. We note that although we focus on isotropic case, it is possible to extend the results in this section to anisotropic L\'evy-driven MA random fields.  

\begin{remark}[Connections to  Mat\'ern covariance functions]\label{Matern-cov-rem}
In spatial statistics, Gaussian random fields with the following Mat\'ern covariance functions play an important role \cite[cf.][]{Ma86,St99,GuGn06}:
\[
M(\bm{x}; \nu, a, \sigma) = \sigma^{2}\|a\bm{x}\|^{\nu}F_{\nu}(\|a\bm{x}\|),\ \nu>0, a>0, \sigma>0,
\]
where $F_{\nu}$ denotes the modified Bessel function of the second kind of order $\nu$ (we call $\nu$ the index of Mat\'ern covariance function). \cite{BrMa17} showed that in the univariate case, when the kernel function is $\phi(\bm{x}) = \|a\bm{x}\|^{\nu}F_{\nu}(\|a\bm{x}\|)$, which they call a Mat\'ern kernel with index $\nu$,  then the Levy-driven MA random field has a Mat\'ern covariance function with index $d/2 + \nu$. For example, a normalized CAR($1$) random field has a Mat\'ern covariance function since its kernel function is given by $\phi(\bm{x}) =  \exp(-\|\lambda_{1}\bm{x}\|) = \sqrt{(2/\pi)}\|\lambda_{1}\bm{x}\|^{1/2}F_{1/2}(\|\lambda_{1}\bm{x}\|)$ for some $\lambda_{1}<0$.
\end{remark}

In general, if $\bm{\phi}$ depends only on $\|\bm{x}\|$, i.e., $\bm{\phi}(\bm{x}) = \bm{\phi}(\|\bm{x}\|)$, then $\bm{e}$ is a strictly stationary isotropic random field and the second moment of $\bm{e}(\bm{x})$ satisfies
\begin{align*}
\Cov(\bm{e}(\bm{0}), \bm{e}(\bm{x})) = \int_{\mathbb{R}^{d}}\bm{\phi}(\|\bm{x}-\bm{u}\|)\Sigma_{\bm{L}}\bm{\phi}(\|\bm{u}\|)d\bm{u}.
\end{align*}

Consider the following decomposition: 
\begin{align*}
\bm{e}(\bm{x}) &= \!\!\int_{\mathbb{R}^{d}}\!\!\!\!\bm{\phi}(\bm{x} - \bm{u})\psi_{0}\left(\|\bm{x} - \bm{u}\| : m_{n}\right)\!\bm{L}(d\bm{u}) \!+\! \int_{\mathbb{R}^{d}}\!\!\!\!\bm{\phi}(\bm{x} - \bm{u})\!\!\left(1 - \psi_{0}\left(\|\bm{x} - \bm{u}\| : m_{n}\right)\right)\!\bm{L}(d\bm{u})\\
&=: \bm{e}_{1, m_{n}}(\bm{x}) + \bm{e}_{2, m_{n}}(\bm{x}),
\end{align*}
where $m_{n}$ is a sequence of positive constants with $m_{n} \to \infty$ as $n \to \infty$ and $\psi_{0}(\cdot:c) : \mathbb{R} \to [0,1]$ is a truncation function defined by
\begin{align*}
\psi_{0}(x:c) = 
\begin{cases}
1 & \text{if $|x| \leq c/4$},\\
-{4 \over c}\left(x-{c \over 2}\right) & \text{if $c/4 < |x| \leq c/2$},\\
0 & \text{if $x>c/2$}.
\end{cases}
\end{align*} 
The random field $\bm{e}_{1,m_n} = \{ \bm{e}_{1,m_{n}}(\bm{x}) = (e_{11,m_{n}}(\bm{x}),e_{12,m_{n}}(\bm{x}))'  : \bm{x} \in \mathbb{R}^d \}$ is $m_{n}$-dependent (with respect to the $\ell^{2}$-norm), i.e., $\bm{e}_{1, m_n}(\bm{x}_{1})$ and $\bm{e}_{1, m_n}(\bm{x}_{2})$ are independent if $\| \bm{x}_1-\bm{x}_2\| \geq m_{n}$. Also, if the tail of the kernel function $\bm{\phi}(\cdot)$ decays sufficiently fast, then the random field $\bm{e}_{2,m_n}=\{ \bm{e}_{2, m_{n}}(\bm{x})  = (e_{21,m_{n}}(\bm{x}),e_{22,m_{n}}(\bm{x}))' : \bm{x} \in \mathbb{R}^{d}\}$ is asymptotically negligible. In such cases, we can approximate $\bm{e}$ by the $m_{n}$-dependent process $\bm{e}_{1,m_n}$ and verify conditions on mixing coefficients in Assumptions 2.3, 4.1, %\ref{Ass:RF}, \ref{Ass:band}, 
and \ref{Ass:RF2} as shown in the following proposition. 

\begin{proposition}\label{Levy-MA-moment}
Consider a L\'evy-driven MA random field $\bm{e}$ defined by (\ref{CARMA}). Assume that $\phi_{j,k}(\bm{x}) = r_{0,jk} e^{-r_{1,jk} \|\bm{x}\|}$ where $|r_{0,jk}| > 0$ and $r_{1,jk}>0$, $j,k=1,2$. Additionally, assume that 
\begin{itemize}
\item[(a)] the random measure $\bm{L}(\cdot)$ is Gaussian with triplet $(0, \Sigma_0 , 0)$ or
\item[(b)] the random measure $\bm{L}(\cdot)$ is non-Gaussian with triplet $(\bm{\gamma}_0, 0 , \nu_{0})$, $\bm{\mu}_{(L)} = (0,0)'$, and the marginal L\'evy density $\nu_{0,j}(x)$ of $L_j(\cdot)$ is given by
\begin{align}
\nu_{0,j}(x) &= {1 \over |x|^{1 + \beta_{0,j}}}\left(C_{0,j}e^{-c_{0,j}|x|^{\alpha_{0,j}}} + {C_{1,j} \over 1 + |x|^{\beta_{1,j}}}\right)1_{\mathbb{R} \backslash \{0\}}(x),\ \label{LS-Levy} 
\end{align}
where $\alpha_{0,j} > 0$, $\beta_{0,j} \in (-1,2]$, $\beta_{1,j} >0$, $\beta_{0,j} + \beta_{1,j}>6$, $c_{0,j}>0$, $C_{0,j} \geq 0$, $C_{1,j} \geq 0$, and $C_{0,j} + C_{1,j}>0$, $j=1,2$.
\end{itemize}
Then $\bm{e}_{2,m_n}$ is asymptotically negligible, that is, we can replace $\bm{e}$ with $\bm{e}_{1,m_n}$ in the results in Section 4. %\ref{sec:main}. 
Further, $\bm{e}_{1,m_n}$ satisfies Assumptions 2.3, 4.1, %\ref{Ass:RF}, \ref{Ass:band}, 
and \ref{Ass:RF2} with $A_{n,j} \sim n^{\zeta_{0}/d}$, $A_{n1,j} = A_{n,j}^{\zeta_1}$, $A_{n2,j} = A_{n1,j}^{\zeta_2}$, $m_{n} = \underline{A}_{n2}^{1/2}$, and $h_j \sim n^{-\zeta_3/d}$ where $\zeta_{0}$, $\zeta_{1}$, $\zeta_{2}$, and $\zeta_3$ are positive constants such that 
\begin{align*}
\zeta_0 &\in \left(0, \min\left\{1,{2p+2 \over d}\right\}\right],\ \zeta_1 \in \left({\zeta_0d \over d+2p+2}, {2p+2 \over d + 2p+2}\right),\\ 
\zeta_2 &\in \left(0, \min\left\{{2 \over 2+d \max\{1, \zeta_0\}}, 1-{\zeta_0 d \over \zeta_1(d + 2p+2)}, {2p+2 \over \zeta_1(d+2p+2)}-1\right\}\right), \\
\zeta_3 &\in \left({d\zeta_0 \over 2p+d+2}, \min\left\{{d\zeta_0 \over 2p+d}, \zeta_0\left(1-\zeta_1(1+\zeta_2)\right), \zeta_1\left(1-\left(1+{d \over 2}\zeta_0\right)\zeta_2\right)\right\}\right).
\end{align*} 
\end{proposition}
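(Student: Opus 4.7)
The plan is to exploit the decomposition $\bm{e} = \bm{e}_{1,m_n} + \bm{e}_{2,m_n}$ introduced just above the statement. Because the truncated kernel $\bm{\phi}(\cdot)\psi_0(\|\cdot\|;m_n)$ is supported in $\{\|\bm{u}\|\leq m_n/2\}$ and $\bm{L}$ has independent increments on disjoint sets, $\bm{e}_{1,m_n}$ is strictly $m_n$-dependent in the $\ell^2$ norm. Taking $m_n = \underline{A}_{n2}^{1/2}$ and noting $\underline{A}_{n2} \geq m_n$ for all large $n$, every $\alpha$- or $\beta$-mixing coefficient of $\bm{e}_{1,m_n}$ evaluated at a lag $\geq \underline{A}_{n2}$ vanishes. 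The work therefore reduces to: (I) moment and covariance-integrability bounds for $\bm{e}$ itself; (II) negligibility of the tail field $\bm{e}_{2,m_n}$ in the statistics appearing in Sections~\ref{sec:main} and \ref{sec:LP-unif}; and (III) a check that the stated ranges of $\zeta_0,\zeta_1,\zeta_2,\zeta_3$ satisfy the surviving parts of Assumptions~\ref{Ass:RF}, \ref{Ass:band}, and \ref{Ass:RF-unif}.

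For (I), case~(a) is immediate: all moments of $e_j(\bm{0})$ are finite, and $\sigma_{\bm{e}}$ inherits exponential decay from $\bm{\phi}$. For case~(b), the tail estimate from (\ref{LS-Levy}) gives $\int |x|^r \nu_{0,j}(dx) < \infty$ for every $r \in (\max\{0,-\beta_{0,j}\}, \beta_{0,j}+\beta_{1,j})$, so the constraint $\beta_{0,j}+\beta_{1,j}>6$ leaves room for some integer $q_2>4$. Combining this with $|\phi_{j,k}(\bm{u})|\lesssim e^{-r_{1,jk}\|\bm{u}\|}$ and a Rosenthal-type moment bound for integrals against infinitely divisible random measures (cf.\ \cite{RaRo89}) yields $E[|e_j(\bm{0})|^{q_2}]<\infty$. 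The covariance $\sigma_{\bm{e},jk}(\bm{x}) = \int \bm{\phi}(\bm{x}-\bm{u})\Sigma_{\bm{L}}\bm{\phi}(\bm{u})d\bm{u}$ is controlled by a convolution of two exponentials, producing $|\sigma_{\bm{e},jk}(\bm{x})|\lesssim (1+\|\bm{x}\|^{d-1})e^{-r_*\|\bm{x}\|/2}$ with $r_*=\min_{j,k}r_{1,jk}$, so $\sigma_{\bm{e}}\in L^1(\mathbb{R}^d)$.

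For (II), the same moment formula applied to $\bm{e}_{2,m_n}(\bm{x})$, whose kernel is supported outside the ball of radius $m_n/4$ and bounded by $|r_{0,jk}|e^{-r_{1,jk}\|\bm{u}\|}$, yields $E[\|\bm{e}_{2,m_n}(\bm{0})\|^{q_2}] = O\bigl(m_n^{(d-1)q_2/2}e^{-q_2 r_* m_n/8}\bigr)$, which is super-polynomially small in $n$. Substituting $\bm{e} = \bm{e}_{1,m_n}+\bm{e}_{2,m_n}$ into the variance decomposition of $V_n(\bm{0})$ carried out in Step~2 of the proof of Theorem~\ref{thm: LP-CLT} and into the covariance estimator $\hat{W}_n(\bm{0})$, the $\bm{e}_{2,m_n}$-contribution is $o_p(1)$ after the $\sqrt{A_n h_1\cdots h_d}$ normalization, so every limit statement in the relevant sections is invariant under the replacement $\bm{e}\leftrightarrow \bm{e}_{1,m_n}$.

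For (III), with $\alpha_1(a)=\beta_1(a)=0$ for $a\geq m_n$, every term in (\ref{Cond(v)1})--(\ref{Cond(v)3}) and in (\ref{unif-F3}) that involves $\alpha_1(\underline{A}_{n2})$ or a summation from $\underline{A}_{n1}$ vanishes. What remains are parts of Assumption~\ref{Ass:band}(i)--(iv) together with the combinatorial prefactors in (\ref{Cond(v)3}) and the side condition (\ref{unif-F2}), and these reduce to algebraic inequalities on $\zeta_0,\zeta_1,\zeta_2,\zeta_3$ such as $\zeta_0(1-\zeta_1)>\zeta_3$, $d\zeta_0-(2p+d)\zeta_3>0$, $(2p+d+2)\zeta_3-d\zeta_0>0$, and $\zeta_1(1-(1+d\zeta_0/2)\zeta_2)>\zeta_3$. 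A routine exponent bookkeeping shows the stated parameter box lies in the intersection of these regions. The principal obstacle I anticipate is the non-Gaussian moment bound in (II): obtaining exponential-in-$m_n$ decay requires handling the two pieces of $\nu_{0,j}$ (the light-tailed $C_{0,j}$-part and the heavy-tailed $C_{1,j}$-part) separately in the L\'evy--It\^o decomposition of small and large jumps, since the polynomial tail does not, by itself, yield exponential decay without being combined with the spatial exponential decay of $\bm{\phi}$.
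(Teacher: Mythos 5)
Your proposal follows essentially the same route as the paper: the same $m_n$-dependent truncation with $m_n=\underline{A}_{n2}^{1/2}$ forcing $\alpha_1(\underline{A}_{n2})=\beta_1(\underline{A}_{n2})=0$, the same moment and covariance bounds obtained by combining the exponential decay of $\bm{\phi}$ with the finite moments of the L\'evy measure guaranteed by $\beta_{0,j}+\beta_{1,j}>6$ (the paper invokes Theorem 25.3 of \cite{Sa99} and works with the sixth and fourth moments), and the same exponent bookkeeping for $\zeta_0,\dots,\zeta_3$. The only cosmetic difference is that the paper establishes negligibility of $\bm{e}_{2,m_n}$ via a uniform bound $\max_{1\le i\le n}|e_{2j,m_n}(\bm{X}_i)|=O_p(m_n^{(d-1)/4}n^{1/4}e^{-r_1m_n/4})$ using Markov's inequality and a maximal inequality, rather than by substituting into the variance decomposition as you suggest; both are valid, and your anticipated obstacle about the two pieces of $\nu_{0,j}$ resolves exactly as you describe, since the exponential decay in $m_n$ comes entirely from the spatial kernel.
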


\begin{remark}
When $d = 2$ and $p \geq 1$, the conditions on $\{\zeta_j\}_{j=0}^{3}$ are typically satisfied when $\zeta_0 = 1$, $\zeta_1 = {3 \over 2p+4}$, $\zeta_2 \in \left(0, {1 \over 6}\right)$. The L\'evy density of the form (\ref{LS-Levy}) corresponds to a compound Poisson random measure if $\beta_{0,j} \in [-1,0)$, a Variance Gamma random measure if $\alpha_{0,j}=1$, $\beta_{0,j}=0$, $C_{1,j} = 0$, and a tempered stable random measure if $\beta_{0,j} \in (0,1)$, $C_{1,j}=0$ (cf. Section 5 in \cite{KaKu20}). It is straight forward to extend Proposition \ref{Levy-MA-moment} to the case that $\phi$ is a finite sum of kernel functions with exponential decay. Therefore, our results in Section 4 %\ref{sec:main} 
can be applied to a wide class of CARMA($p_0,q_0$) random fields and extending the results to anisotropic CARMA random fields (cf. \cite{BrMa17}) is straightforward.  
\end{remark}

The next result provides examples of L\'evy-driven MA random fields that satisfy assumptions in Theorem 5.1. %\ref{thm:LPR-unif}. 

\begin{proposition}\label{Levy-MA-moment-unif}
Consider a univariate L\'evy-driven MA random field $\bm{e}$ defined by (\ref{CARMA}). Assume that $\phi(\bm{x}) = r_{0} e^{-r_{1} \|\bm{x}\|}$ where $|r_{0}| > 0$ and $r_{1}>0$. Additionally, assume Conditions (a) or (b) in Proposition \ref{Levy-MA-moment}. Then $\bm{e}_{2,m_n}$ is asymptotically negligible, that is, we can replace $\bm{e}$ with $\bm{e}_{1,m_n}$ in Theorem 5.1. % \ref{thm:LPR-unif}.  
Further, $\bm{e}_{1,m_n}$ satisfies Assumption 5.3 %\ref{Ass:RF-unif} 
with $A_{n,j} \sim n^{\zeta_{0}/d}$, $A_{n1,j} = A_{n,j}^{\zeta_1}$, $A_{n2,j} = A_{n1,j}^{\zeta_2}$, $m_{n} = \underline{A}_{n2}^{1/2}$, and $h_j \sim n^{-\zeta_3/d}$ where $\zeta_{0}$, $\zeta_{1}$, $\zeta_{2}$, and $\zeta_3$ are positive constants such that $\zeta_0  \in \left({2 \over q_2},1\right]$, $\zeta_1 \in \left(0, {1 \over 2} - {1 \over \zeta_0q_2}\right)$, $\zeta_2 \in (0,1)$, and $\zeta_3 \in \left(0, \min\{1,\zeta_0(1-2\zeta_1)-{2 \over q_2}\}\right)$.
\end{proposition}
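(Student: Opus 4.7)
My plan is to split the argument into two tasks: first, to show that the residual field $\bm{e}_{2,m_n}$ is asymptotically negligible relative to the uniform rate of Theorem \ref{thm:LPR-unif}; and second, to verify each clause of Assumption \ref{Ass:RF-unif} for the truncated field $\bm{e}_{1,m_n}$ under the stated power-law parameterizations of $A_{n,j}$, $A_{n1,j}$, $A_{n2,j}$, and $h_j$. Much of the scaffolding will mirror the proof of Proposition \ref{Levy-MA-moment}, with the key new point being that Assumption \ref{Ass:RF-unif}(iv) is a $\beta$-mixing condition, which becomes essentially trivial for an $m_n$-dependent field.

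For the first task, I would start with a Rosenthal-type moment bound for stochastic integrals against the L\'evy random measure $\bm{L}$: in Case (a) this reduces to a direct Gaussian moment computation, while in Case (b) the explicit L\'evy density (\ref{LS-Levy}) supplies arbitrary polynomial moments of $\bm{L}$. Because the truncated kernel $\phi(\bm{u})(1-\psi_0(\|\bm{u}\|:m_n))$ is supported on $\|\bm{u}\|>m_n/4$ and is uniformly bounded there by $r_0 e^{-r_1 m_n/4}$, such a bound would yield $\|e_{2,m_n}(\bm{0})\|_{q_2}\lesssim m_n^{(d-1)/q_2} e^{-r_1 m_n/4}$. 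Inserting this into the LP estimator as a kernel-weighted perturbation and applying a union bound plus Markov's inequality over the $n$ sampling sites, the induced uniform-in-$\bm{z}$ error is of order $n^{1/q_2}\|e_{2,m_n}(\bm{0})\|_{q_2}$ up to a logarithmic factor. Since $m_n=\underline{A}_{n2}^{1/2}$ is polynomial in $n$, this decays super-polynomially and is therefore of smaller order than the uniform rate established in Theorem \ref{thm:LPR-unif}.

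For the second task, parts (i)--(iii) of Assumption \ref{Ass:RF-unif} are relatively routine: stationarity of $\bm{e}_{1,m_n}$ follows from translation-invariance of $\psi_0(\|\bm{x}-\bm{u}\|:m_n)$; the $q_2$-th moment bound comes from the same Rosenthal argument applied to $\phi\,\psi_0$; $\sigma_{\bm{e}_{1,m_n}}(\bm{v})$ vanishes for $\|\bm{v}\|\geq m_n$ by $m_n$-dependence and is hence integrable; and the three divergences $A_{n2,j}\to\infty$, $A_{n1,j}/A_{n2,j}\to\infty$, $A_{n,j}h_j/A_{n1,j}\to\infty$ reduce to the exponent inequalities $\zeta_1\zeta_2>0$, $\zeta_2<1$, and $\zeta_3<\zeta_0(1-\zeta_1)$, all of which sit inside the stated ranges. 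Part (iv) is the structurally new piece: $m_n$-dependence of $\bm{e}_{1,m_n}$ lets us take $\beta_1(a)=\mathbf{1}\{a\leq m_n\}$, so $\beta_1(\underline{A}_{n2})=0$ for all large $n$ and (\ref{unif-F3}) holds automatically. The symmetric parameterization $A_{n1,j}=A_{n,j}^{\zeta_1}$ immediately gives $A_n^{(1)}/(\overline{A}_{n1})^d\sim 1$; substituting the power laws into the second half of (\ref{unif-F2}) reduces it to $\zeta_0(1-2\zeta_1)-\zeta_3-2/q_2>0$, which is exactly the upper bound imposed on $\zeta_3$, while the auxiliary side conditions $\zeta_0>2/q_2$ and $\zeta_1<1/2-1/(\zeta_0 q_2)$ are precisely what is needed to keep the admissible ranges non-empty.

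The hardest step I anticipate is establishing the moment bound $\|e_{2,m_n}(\bm{0})\|_{q_2}\lesssim m_n^{(d-1)/q_2}e^{-r_1 m_n/4}$ in the non-Gaussian Case (b). The Gaussian case is handled by standard Itô-isometry style moments, but in Case (b) one must carefully apply a Rosenthal-type inequality for infinitely divisible integrals while tracking both the Gaussian-like and the Poissonian-jump contributions coming from the decomposition of (\ref{LS-Levy}), ensuring that both halves inherit the exponential gain from the truncation. Once that pointwise moment bound is in hand, everything else is either inherited from Proposition \ref{Levy-MA-moment} or reduces to arithmetic verification of exponent inequalities under the power-law parameterization.
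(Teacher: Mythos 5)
Your proposal is correct and follows essentially the same route as the paper: a moment bound on $e_{2,m_n}(\bm{0})$ exploiting the exponential decay of the truncated kernel, a maximal inequality over the $n$ sampling sites combined with the uniform bound on the kernel-weighted sum to kill the residual field, and then the observation that $m_n$-dependence forces $\beta_1(\underline{A}_{n2})=0$ so that (\ref{unif-F3}) is automatic and only the exponent arithmetic in (\ref{unif-F2}) and Assumption \ref{Ass:RF-unif}(iii) remains. The only cosmetic difference is that the paper works with a fourth-moment bound and Lemma 2.2.2 of van der Vaart and Wellner (giving an $n^{1/4}$ factor) rather than your $q_2$-th moment/$n^{1/q_2}$ version; both are swamped by the super-polynomial decay of $e^{-r_1 m_n/4}$.
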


\section{Proofs for Section 4}\label{Appendix: Sec4}

\subsection{Proof of Theorem 4.1}

In this section, we prove Steps 1 and 3 in the proof of Theorem 4.1. %\ref{thm: LP-CLT}.  

\noindent
(Step 1) Now we evaluate $S_n(\bm{0})$. By a change of variables and the dominated convergence theorem, we have 
\begin{align*}
E[S_n(\bm{0})] &= {A_n^{-1} \over h_1 \dots h_d}\int K_{Ah}(\bm{x})
H^{-1}\left(
\begin{array}{c}
1 \\
\check{(\bm{x}/A_n)}
\end{array}
\right)
(1\ \check{(\bm{x}/A_n)}')H^{-1}g(\bm{x}/A_n)d\bm{x}\\
&= {A_n^{-1} \over h_1 \dots h_d} A_nh_1\dots h_d \int K(\bm{w})
\left(
\begin{array}{c}
1 \\
\check{\bm{w}}
\end{array}
\right)
(1\ \check{\bm{w}}')g(\bm{w} \circ \bm{h})d\bm{w}\\
&= \left(g(\bm{0})\int K(\bm{w})
\left(
\begin{array}{c}
1 \\
\check{\bm{w}}
\end{array}
\right)
(1\ \check{\bm{w}}')d\bm{w}\right)(1 + o(1)). 
\end{align*}
For $1 \leq j_{1,1} \leq \dots \leq j_{1,L_1} \leq d$, $1 \leq j_{2,1}\leq \dots \leq j_{2,L_2} \leq d, 0 \leq L_1,L_2 \leq p$, we define
\begin{align*}
I_{n,j_{1,1}\dots j_{1,L_1},j_{2,1}\dots j_{2,L_2}} &:= {1 \over nh_1 \dots h_d}\sum_{i=1}^{n}K_{Ah}\left(\bm{X}_i \right)\prod_{\ell_1=1}^{L_1}\left({X_{i,j_{1,\ell_1}}\over A_{n,j_{1,\ell_1}}h_{j_{1,\ell_1}}}\right)\prod_{\ell_2=1}^{L_2}\left({X_{i,j_{2,\ell_2}} \over A_{n,j_{2,\ell_2}}h_{j_{2,\ell_2}}}\right). 
\end{align*}
Then, by a change of variables and the dominated convergence theorem, we have
\begin{align*}
&\Var(I_{n,j_{1,1}\dots j_{1,L_1},j_{2,1}\dots j_{2,L_2}})\\ 
&\quad = {1 \over n(h_1 \dots h_d)^2}\Var\left(K_{Ah}\left(\bm{X}_1\right)\prod_{\ell_1=1}^{L_1}\left({X_{i,j_{1,\ell_1}} \over A_{n,j_{1,\ell_1}}h_{j_{1,\ell_1}}}\right)\prod_{\ell_2=1}^{L_2}\left({X_{i,j_{2,\ell_2}} \over A_{n,j_{2,\ell_2}}h_{j_{2,\ell_2}}}\right)\right)\\
&\quad = {1 \over nh_1 \dots h_d}\left\{\int \prod_{\ell_1=1}^{L_1}z_{j_{1,\ell_1}}^2\prod_{\ell_2=1}^{L_2}z_{j_{2,\ell_2}} ^2K^2(\bm{z})g(\bm{z} \circ \bm{h})d\bm{z} \right. \\ 
&\left. \quad \quad - h_1 \dots h_d\left(\int \prod_{\ell_1=1}^{L_1}z_{j_{1,\ell_1}}\prod_{\ell_2=1}^{L_2}z_{j_{2,\ell_2}} K(\bm{z})g(\bm{z} \circ \bm{h})d\bm{z}\right)^2\right\}\\
&\quad = {1 \over nh_1 \dots h_d}\left(g(\bm{0})\kappa_{j_{1,1}\dots j_{1,L_1}j_{2,1}\dots j_{2,L_2}j_{1,1}\dots j_{1,L_1}j_{2,1}\dots j_{2,L_2}}^{(2)} + o(1)\right)\\
&\quad \quad   - {1 \over n}(g(\bm{0})\kappa_{j_{1,1}\dots j_{1,L_1}j_{2,1}\dots j_{2,L_2}}^{(1)} + o(1))^2\\
&\quad = {g(\bm{0})\kappa_{j_{1,1}\dots j_{1,L_1}j_{2,1}\dots j_{2,L_2}j_{1,1}\dots j_{1,L_1}j_{2,1}\dots j_{2,L_2}}^{(2)} \over nh_1 \dots h_d} + o\left({1 \over nh_1 \dots h_d}\right).
\end{align*}
Then for any $\rho>0$, 
\begin{align*}
&P\left(|I_{n,j_{1,1}\dots j_{1,L_1},j_{2,1}\dots j_{2,L_2}} - g(\bm{0})\kappa_{j_{1,1}\dots j_{1,L_1}j_{2,1}\dots j_{2,L_2}}^{(1)}|>\rho \right)\\ 
&\quad \leq \rho^{-1}\left\{\Var(I_{n,j_{1,1}\dots j_{1,L_1},j_{2,1}\dots j_{2,L_2}}) + \left(E[I_{n,j_{1,1}\dots j_{1,L_1},j_{2,1}\dots j_{2,L_2}}] - g(\bm{0})\kappa_{j_{1,1}\dots j_{1,L_1}j_{2,1}\dots j_{2,L_2}}^{(1)}\right)^2\right\}\\
&= O\left({1 \over nh_1 \dots h_d}\right) + o(1) = o(1).
\end{align*}
This yields $I_{n,j_{1,1}\dots j_{1,L_1},j_{2,1}\dots j_{2,L_2}} \stackrel{p}{\to} g(\bm{0})\kappa_{j_{1,1}\dots j_{1,L_1}j_{2,1}\dots j_{2,L_2}}^{(1)}$. Hence we have
\begin{align*}
S_{n}(\bm{0}) &\stackrel{p}{\to} g(\bm{0})S.
\end{align*}

\noindent
(Step 3) Now we evaluate $B_n(\bm{0})$. Decompose
\begin{align*}
B_{n,j_1\dots j_L}(\dot{\bm{X}})&= \left\{B_{n,j_1\dots j_L}(\dot{\bm{X}}) - B_{n,j_1\dots j_L}(\bm{0}) - E\left[B_{n,j_1\dots j_L}(\dot{\bm{X}}) - B_{n,j_1\dots j_L}(\bm{0})\right]\right\}\\
&\quad + E\left[B_{n,j_1\dots j_L}(\dot{\bm{X}}) - B_{n,j_1\dots j_L}(\bm{0})\right]\\
&\quad + \left\{B_{n,j_1\dots j_L}(\bm{0}) - E\left[B_{n,j_1\dots j_L}(\bm{0})\right]\right\}\\
&\quad + E\left[B_{n,j_1\dots j_L}(\bm{0})\right]\\
&=: \sum_{\ell=1}^{4}B_{n,j_1\dots j_L\ell}. 
\end{align*}
Define $N_{\bm{x}}(h):= \prod_{j=1}^{d}[x_j-h_j, x_j + h_j]$ and $\bm{x}=(x_1,\dots,x_d) \in (-1/2,1/2)^d$. For $B_{n,j_1\dots j_L1}$, by a change of variables and the dominated convergence theorem, we have
\begin{align}\label{B_n01}
&\Var(B_{n,j_1\dots j_L1})  \nonumber \\ 
&\leq {A_n \over \{(p+1)!\}^2nh_1 \dots h_d}E\left[K_{Ah}^2\left(\bm{X}_i \right)\prod_{\ell=1}^{L}\left({X_{i,j_\ell} \over A_{n,j_\ell}h_{j_\ell}}\right)^2 \right. \nonumber \\
&\left. \quad \times \sum_{1 \leq j_{1,1} \leq \dots \leq j_{1,p+1} \leq d, 1 \leq j_{2,1} \leq \dots \leq j_{2,p+1} \leq d}{1 \over \bm{s}_{j_{1,1} \dots j_{1,p+1}}! }{1 \over \bm{s}_{j_{2,1}\dots j_{2,p+1}}!} \right. \nonumber  \\
&\left. \quad \times (\partial_{j_{1,1}\dots j_{1,p+1}}m(\dot{\bm{X}}_i/A_n) \!-\! \partial_{j_{1,1}\dots j_{1,p+1}}m(\bm{0}))(\partial_{j_{2,1}\dots j_{2,p+1}}m(\dot{\bm{X}}_i/A_n) \!-\! \partial_{j_{2,1}\dots j_{2,p+1}}m(\bm{0})) \right.  \nonumber \\
&\left. \quad  \times \prod_{\ell_1=1}^{p+1}{X_{i,j_{1,\ell_1}} \over A_{n,j_{\ell_1}}}\prod_{\ell_2=1}^{p+1}{X_{i,j_{2,\ell_2}} \over A_{n,j_{\ell_2}}}\right]  \nonumber \\ 
&\leq {A_n \over \{(p+1)!\}^2n}\max_{1 \leq j_1 \leq \dots \leq j_{p+1} \leq d}\sup_{\bm{y} \in N_{\bm{0}}(h)}|\partial_{j_1\dots j_{p+1}}m(\bm{y}) - \partial_{j_1\dots j_{p+1}}m(\bm{0})|^2 \nonumber \\
&\quad \quad \times  \sum_{1 \leq j_{1,1} \leq \dots \leq j_{1,p+1} \leq d, 1 \leq j_{2,1} \leq \dots \leq j_{2,p+1} \leq d}\prod_{\ell_1=1}^{p+1}h_{j_{1,\ell_1}}\prod_{\ell_2=1}^{p+1}h_{j_{2,\ell_2}} \nonumber \\
&\quad \quad \times \int \left(\prod_{\ell=1}^{L}z_{j_\ell}^2\prod_{\ell_1=1}^{p+1}|z_{j_{1,\ell_1}}|\prod_{\ell_2=1}^{p+1}|z_{j_{2,\ell_2}}|\right)K^2(\bm{z})g(\bm{z} \circ \bm{h})d\bm{z} \nonumber \\
&= o\left({A_n \over n} \sum_{1 \leq j_{1,1} \leq \dots \leq j_{1,p+1} \leq d, 1 \leq j_{2,1} \leq \dots \leq j_{2,p+1} \leq d}\prod_{\ell_1=1}^{p+1}h_{j_{1,\ell_1}}\prod_{\ell_2=1}^{p+1}h_{j_{2,\ell_2}}\right) \nonumber \\
&= o(1).
\end{align}
Then we have $B_{n,j_1\dots j_L1} = o_p(1)$. 

For $B_{n,j_1\dots j_L2}$, 
\begin{align}\label{B_n02}
&|B_{n,j_1\dots j_L2}| \nonumber \\ 
&\leq {1 \over (p+1)!}\max_{1 \leq j_1,\dots,j_{p+1} \leq d}\sup_{\bm{y} \in N_{\bm{0}}(h)}|\partial_{j_1\dots j_{p+1}}m(\bm{y}) - \partial_{j_1\dots j_{p+1}}m(\bm{0})| \nonumber \\
&\quad \times \!\! \sqrt{\! A_nh_1 \dots h_d}\!\!\!\sum_{1 \leq j_{1,1} \leq \dots \leq j_{1,p+1} \leq d}\prod_{\ell_1=1}^{p+1}\!\!h_{j_{1,\ell_1}} \!\! \int \!\! \left(\prod_{\ell=1}^{L}|z_{j_\ell}|\prod_{\ell_1=1}^{p+1}|z_{j_{1,\ell_1}}| \! \right)\!\! |K(\bm{z})|g(\bm{z} \circ \bm{h})d\bm{z} \nonumber \\
&= o(1). 
\end{align}
For $B_{n,j_1\dots j_L3}$, 
\begin{align}\label{B_n03}
&\Var(B_{n,j_1\dots j_L3}) \nonumber \\ 
&\leq {A_nh_1 \dots h_d \over \{(p+1)!\}^2nh_1 \dots h_d} \sum_{1 \leq j_{1,1} \leq \dots \leq j_{1,p+1} \leq d, 1 \leq j_{2,1} \leq \dots \leq j_{2,p+1} \leq d}\!\!\!\!\!\!\!\!\!\!\!\!\!\!\!\!\partial_{j_{1,1}\dots j_{1,p+1}}m(\bm{0})\partial_{j_{2,1}\dots j_{2,p+1}}m(\bm{0}) \nonumber \\
&\quad \quad \times \prod_{\ell_1=1}^{p+1}h_{j_{1,\ell_1}}\prod_{\ell_2=1}^{p+1}h_{j_{2,\ell_2}} \int \left(\prod_{\ell=1}^{L}z_{j_\ell}^2\prod_{\ell_1}^{p+1}|z_{j_{1,\ell_1}}|\prod_{\ell_2=1}^{p+1}|z_{j_{2,\ell_2}}|\right)K^2(\bm{z})g(\bm{z} \circ \bm{h})d\bm{z} \nonumber  \\ 
&= O\left({A_n \over n}\sum_{1 \leq j_{1,1} \leq \dots \leq j_{1,p+1} \leq d, 1 \leq j_{2,1} \leq \dots \leq j_{2,p+1} \leq d}\prod_{\ell_1=1}^{p+1}h_{j_{1,\ell_1}}\prod_{\ell_2=1}^{p+1}h_{j_{2,\ell_2}}\right).
\end{align}
Then we have $B_{n,j_1\dots j_L3} = o_p(1)$. 

For $B_{n,j_1\dots j_L4}$, 
\begin{align}\label{B_n04}
B_{n,j_1\dots j_L4} &\!=\! \sqrt{A_nh_1 \dots h_d}\sum_{1 \leq j_{1,1} \leq \dots \leq j_{1,p+1} \leq d}{\partial_{j_{1,1}\dots j_{1,p+1}}m(\bm{0}) \over \bm{s}_{j_{1,1} \dots j_{1,p+1}}!}\nonumber \\&\quad \times \prod_{\ell_1=1}^{p+1}h_{j_{1,\ell_1}} \int \left(\prod_{\ell=1}^{L}z_{j_\ell} \prod_{\ell_1=1}^{p+1}z_{j_{1,\ell_1}}\right) K(\bm{z})g(\bm{z} \circ \bm{h})d\bm{z} \nonumber \\
&\!=\! g(\bm{0}) \sqrt{\!\! A_nh_1 \dots h_d}\!\!\!\!\!\!\!\!\!\!\sum_{1 \leq j_{1,1} \leq \dots \leq j_{1,p+1} \leq d}\!\!\!\!\!\!\!\!{\partial_{j_{1,1}\dots j_{1,p+1}}m(\bm{0}) \over \bm{s}_{j_{1,1} \dots j_{1,p+1}}!} \!\! \prod_{\ell_1=1}^{p+1} \!\! h_{j_{1,\ell_1}} \! \kappa_{j_1\dots j_Lj_{1,1}\dots j_{1,p+1}}^{(1)} \!\!\!+\! o(1). 
\end{align} 
Combining (\ref{B_n01})-(\ref{B_n04}), 
\begin{align*}
B_{n,j_1\dots j_L}(\dot{\bm{X}}) &= g(\bm{0})\sqrt{A_nh_1 \dots h_d}\sum_{1 \leq j_{1,1} \leq \dots \leq j_{1,p+1} \leq d}{\partial_{j_{1,1}\dots j_{1,p+1}}m(\bm{0}) \over \bm{s}_{j_{1,1} \dots j_{1,p+1}}!}\\
&\quad \times \prod_{\ell_1=1}^{p+1}h_{j_{1,\ell_1}} \kappa_{j_1\dots j_Lj_{1,1}\dots j_{1,p+1}}^{(1)} + o_p(1)\\
&= g(\bm{0})\sqrt{A_nh_1 \dots h_d}(B^{(d,p)}\bm{M}_n^{(d,p)}(\bm{0}))_{j_1\dots j_L} + o_p(1). 
\end{align*}

\section{Proofs for Section 5}\label{Appendix: Sec5}

In this section, we prove Theorem 5.1, Proposition 5.1, and Corollary 5.1. Before we prove Theorem 5.1, %\ref{thm:LPR-unif}, 
we consider general kernel estimators and derive their uniform convergence rates (Section \ref{sec:general-est-R1}). Since the estimators include many kernel-based estimators such as, kernel density, LC, LL, and LP estimators for random fields on $\mathbb{R}^d$ with irregularly spaced sampling sites, the results are of independent theoretical interest. As applications of the results, we derive uniform convergence rates of LP estimators (Section \ref{sec: Thm LPR-unif proof}). The proofs of Proposition 5.1 and Corollary 5.1 are given in Sections \ref{sec: Prp5.1-proof} and \ref{sec: Cor5.1-proof}, respectively. 

\subsection{Uniform convergence rates for general kernel estimators}\label{sec:general-est-R1}
For $j=1,2,3$, let $f_j: \mathbb{R}^d \to \mathbb{R}$ be functions such that $f_j$ is continuous on $R_{0,\delta} := (-1/2-\delta, 1/2+\delta)^d$ for some $\delta>0$. Define 
\begin{align}
\hat{\Psi}_{\mathrm{I}}(\bm{z}) &= {1 \over n^2A_n^{-1}h_1 \dots h_d }\sum_{i=1}^{n}K_{Ah}(\bm{X}_i-A_n \bm{z}) \nonumber \\
&\quad \times f_{1,Ah}\left(\bm{X}_i - A_n \bm{z}\right)f_{2,A}\left(\bm{X}_i - A_n \bm{z}\right)f_{3,A}\left(\bm{X}_i\right)Z_{\bm{X}_i}, \label{general-est-Psi1}\\
\hat{\Psi}_{\mathrm{II}}(\bm{z}) &= {1 \over nh_1 \dots h_d }\sum_{i=1}^{n}K_{Ah}(\bm{X}_i-A_n \bm{z}) \nonumber \\
&\quad \times f_{1,Ah}\left(\bm{X}_i - A_n \bm{z}\right)f_{2,A}\left(\bm{X}_i - A_n \bm{z}\right)f_{3,A}\left(\bm{X}_i\right), \label{general-est-Psi2}
\end{align}
where $f_{j,Aa}(\bm{x}) = f_j\left({x_1 \over A_{n,1}a_1},\dots, {x_d \over A_{n,d}a_d}\right)$ for $\bm{a} = (a_1,\dots a_d)' \in (0,\infty)^d$ and $\{Z_{\bm{X}_i}\}_{i=1}^{n}$ is a sequence of real-valued random variables. Many kernel estimators, such as kernel density, Nadaraya-Watson, and LP estimators, can be represented by combining special cases of estimators (\ref{general-est-Psi1}) or (\ref{general-est-Psi2}). In this study, we use the uniform convergence rates of these estimators with 
\begin{align*}
f_1 &\in \left\{e'_{j_1 \dots j_L}
\left(
\begin{array}{c}
1 \\
\check{\bm{x}}
\end{array}
\right), e'_{j_{1,1} \dots j_{1,L_1}}
\left(
\begin{array}{c}
1 \\
\check{\bm{x}}
\end{array}
\right)
(1\ \check{\bm{x}}')e_{j_{2,1} \dots j_{2,L_2}}
\right\},\\ 
f_2 &\in \left\{1, \prod_{\ell=1}^{L}x_{j_\ell}\right\},\ f_3 \in \left\{1,\eta, \sigma_{\varepsilon}, \{\partial_{j_1\dots j_{p+1}}m\}_{1\leq j_1 \leq \dots \leq j_{p+1} \leq d}\right\},\ Z_{\bm{X}_i} \in \left\{e(\bm{X}_i), \varepsilon_i \right\}.
\end{align*}

We assume the following conditions for the sampling sites $\{\bm{X}_i\}_{i=1}^{n}$: 
\begin{assumption}\label{Ass:sample-unif-gen}
Let $g$ be a probability density function with support $R_0=[-1/2,1/2]^d$. 
\begin{itemize}
\item[(i)] $A_n/n \to \kappa \in [0,\infty)$ as $n \to \infty$, 
\item[(ii)] $\{\bm{X}_{i}=(X_{i,1},\dots,X_{i,d})'\}_{i = 1}^{n}$ is a sequence of i.i.d. random vectors with density $A_n^{-d}g(\cdot/A_n)$ and $g$ is continuous and positive on $R_0$. 
\item[(iii)] $\{\bm{X}_i\}_{i=1}^{n}$ and $\{Z_{\bm{x}}: \bm{x} \in \mathbb{R}^d\}$ are independent. 
\end{itemize}
\end{assumption}

We also assume the following conditions on the bandwidth $h_j$, the random field $\{Z_{\bm{x}}: \bm{x} \in \mathbb{R}^d\}$, and functions $f_j$: 
\begin{assumption}\label{Ass:RF-unif-gen}
For $j = 1,\dots, d$, let $\{A_{n1,j}\}_{n \geq 1}$, $\{A_{n2,j}\}_{n \geq 1}$ be sequence of positive numbers.  
\begin{itemize}
\item[(i)] The random field $\{Z_{\bm{x}}: \bm{x} \in \mathbb{R}^d\}$ is stationary and $E[|Z_{\bm{0}}|^{q_2}]<\infty$ for some integer $q_2 > 4$. 
\item[(ii)] Define $\sigma_{\bm{Z}}(\bm{x}) = E[Z_{\bm{0}}Z_{\bm{x}}]$. Assume that  $\int_{\mathbb{R}^d} |\sigma_{\bm{Z}}(\bm{v})|d\bm{v}<\infty$.
\item[(iii)] $\min\left\{A_{n2,j},  {A_{n1,j} \over A_{n2,j}}, {A_{n,j}h_j \over A_{n1,j}}\right\} \to \infty$ as $n \to \infty$.
\item[(iv)] The random field $\{Z_{\bm{x}}: \bm{x} \in \mathbb{R}^d\}$ is $\beta$-mixing with mixing coefficients $\beta(a;b) \leq \beta_1(a)\varpi_2(b)$ such that as $n \to \infty$, $h_j \to 0$, $1 \leq j \leq d$, 
\begin{align}
&\sup_{\bm{v} \in R_{0,\delta}}\left|f_2(h_1v_1,\dots,h_dv_d) \over f_2(h_1, \dots, h_d)\right| \in (c_{f_2}, C_{f_2})\  \text{for some}\ 0<c_{f_2}<C_{f_2}<\infty, \label{unif-F1-gen}\\
&{A_n^{(1)} \over (\overline{A}_{n1})^d} \sim 1,\ {A_n^{{1 \over 2}}(h_1 \dots h_d)^{{1 \over 2}} \over n^{1/q_2}(\overline{A}_{n1})^d (\log n)^{{1 \over 2}+\iota}} \gtrsim 1\ \text{for some $\iota \in (0,\infty)$}, \label{unif-F2-gen}\\
&\sqrt{{n^2A_n h_1 \dots h_d \over (A_n^{(1)})^2 \log n}}\beta_1(\underline{A}_{n2})\varpi_2(A_n h_1 \dots h_d) \to 0, \label{unif-F3-gen}
\end{align}
where 
\begin{align*}
A_n^{(1)} &=\prod_{j=1}^{d}A_{n1,j},\ \overline{A}_{n1}=\max_{1 \leq j \leq d}A_{n1,j},\ \underline{A}_{n1}=\min_{1 \leq j \leq d}A_{n1,j},\\ 
\overline{A}_{n2} &=\max_{1 \leq j \leq d}A_{n2,j},\ \underline{A}_{n2}=\min_{1 \leq j \leq d}A_{n2,j}.
\end{align*} 
\item[(v)] $f_1 :\mathbb{R}^d \to \mathbb{R}$ is Lipschitz continuous on $\mathbb{R}^d$, i.e., $|f_1(\bm{v}_1) - f_1(\bm{v}_2)| \leq L_{f_1}|\bm{v}_1 - \bm{v}_2|$ for some $L_{f_1} \in(0,\infty)$ and all $\bm{v}_1,\bm{v}_2 \in \mathbb{R}^d$, and $f_2$ and $f_3$ are continuous on $R_{0,\delta}$. 
\end{itemize}
\end{assumption}

When $Z_{\bm{X}_i} = \varepsilon_i$, we interpret $\{Z_{\bm{x}}: \bm{x} \in \mathbb{R}^d\}$ as a set of i.i.d. random variables and in this case $\sigma_{\bm{Z}}(\bm{x}) = 0$ if $\bm{x} \neq 0$.

The next result provides uniform convergence rates of $\hat{\Psi}_{\mathrm{I}}$ and $\hat{\Psi}_{\mathrm{II}}$. 
\begin{proposition}\label{prp:LP-unif}
Suppose that Assumptions \ref{Ass:sample-unif-gen}, \ref{Ass:RF-unif-gen}, and 5.4 %\ref{Ass:kernel-unif} 
hold. Then as $n \to \infty$, we have
\begin{align}
\sup_{\bm{z} \in [-1/2,1/2]^d}\left|\hat{\Psi}_{\mathrm{I}}(\bm{z}) - E[\hat{\Psi}_{\mathrm{I}}(\bm{z})]\right| &= O_p\left( \left|f_2(h_1,\dots, h_d)\right|\sqrt{{\log n \over n^{2}A_n^{-1} h_1 \dots h_d }}\right), \label{LP-unif-R1}\\
\sup_{\bm{z} \in [-1/2,1/2]^d}\left|\hat{\Psi}_{\mathrm{II}}(\bm{z}) - E[\hat{\Psi}_{\mathrm{II}}(\bm{z})]\right| &= O_p\left( \left|f_2(h_1,\dots, h_d)\right|\sqrt{{\log n \over n h_1 \dots h_d }}\right).\label{LP-unif-R2}
\end{align}
\end{proposition}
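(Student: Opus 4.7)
My plan is to prove both uniform bounds by the standard chaining-plus-exponential-inequality scheme: replace the supremum over $[-1/2,1/2]^d$ by a maximum over a discretization, control each individual deviation by a Bernstein-type bound, and conclude with a union bound. The rate (\ref{LP-unif-R2}) for $\hat{\Psi}_{\mathrm{II}}$ is the easy case since it depends only on the i.i.d.\ design $\{\bm X_i\}$; the real work is concentrated in (\ref{LP-unif-R1}) for $\hat{\Psi}_{\mathrm{I}}$, whose summands carry the spatially $\beta$-mixing factor $Z_{\bm X_i}$.

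I would first introduce a cubic grid $\{\bm z_k\}_{k=1}^{N_n} \subset [-1/2,1/2]^d$ with mesh width $\delta_n$ polynomial in $n^{-1}$, so that $\log N_n = O(\log n)$. The Lipschitz hypotheses on $K$ and $f_1$, together with continuity of $f_2$ and $f_3$ and (\ref{unif-F1-gen}), give a Lipschitz constant of order $(\min_j h_j)^{-1}|f_2(\bm h)|$ for each summand viewed as a function of $\bm z$. After truncating $Z_{\bm X_i}$ at level $\tau_n = n^{1/q_2}(\log n)^{1/2+\iota}$, which by the $q_2$-moment assumption and a union bound removes an asymptotically negligible mass, the deterministic discretization error on the truncated summands is smaller than the target rate; condition (\ref{unif-F2-gen}) is exactly calibrated so that the residual from truncation is negligible. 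It then suffices to bound $\max_k |\hat{\Psi}_\bullet(\bm z_k)-E[\hat{\Psi}_\bullet(\bm z_k)]|$ with truncated $Z$.

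For $\hat{\Psi}_{\mathrm{II}}(\bm z_k)$ I would apply Bernstein's inequality for i.i.d.\ sums directly: each centered summand is bounded by a constant multiple of $|f_2(\bm h)|$ and, by a change of variables mirroring Step~1 in the proof of Theorem~\ref{thm: LP-CLT}, has variance of order $h_1\cdots h_d\,f_2^2(\bm h)$, so the union bound over the $N_n$ grid points yields (\ref{LP-unif-R2}). For $\hat{\Psi}_{\mathrm{I}}(\bm z_k)$ I would follow the large/small-block decomposition of Step~2 in the proof of Theorem~\ref{thm: LP-CLT}: partition $R_n$ into large blocks of side $A_{n1,j}$ separated by small layers of side $A_{n2,j}$ and group the summands by the block containing $\bm X_i$. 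Yu's coupling (cf.\ \cite{Yu94}, Corollary~2.7) then replaces the large-block sums by mutually independent copies at a total cost governed by $\beta_1(\underline A_{n2})\varpi_2(A_n h_1\cdots h_d)$ times the number of active blocks, which under (\ref{unif-F3-gen}) is $o(1/\sqrt{\log n})$ and hence negligible compared with the target rate; the small-block remainder is controlled by the moment bound from Step~2-3. A Bernstein inequality applied to the coupled independent block sums (whose aggregate variance is of order $n^2 A_n^{-1}h_1\cdots h_d\,f_2^2(\bm h)$ in view of the calculation in Step~2-1) combined with the union bound over the grid then delivers (\ref{LP-unif-R1}).

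The main obstacle is that Yu's coupling is formulated for sequences of deterministic cardinality, whereas here the number of summands $[\![\{i: \bm X_i \in B\}]\!]$ in each block is itself random. I would resolve this by conditioning on $\sigma(\{\bm X_i\}_{i=1}^n)$: conditionally, the partition of $\{1,\dots,n\}$ induced by the blocks is deterministic and each block sum is measurable with respect to $\sigma(Z_{\bm x}:\bm x\in B)$, so the spatial $\beta$-mixing of $\{Z_{\bm x}\}$ transfers, with the very coefficients appearing in Assumption~\ref{Ass:RF-unif}(iv), to the conditional $\beta$-mixing required by the coupling lemma. Applying the coupling conditionally and taking expectations over the random design then produces the target rate, since the constants involved (bounds on summands and sums of conditional variances) are uniformly integrable in $\{\bm X_i\}$.
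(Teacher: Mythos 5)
Your proposal follows essentially the same route as the paper's proof: truncation of $Z_{\bm X_i}$ at level $n^{1/q_2}$ times a slowly varying factor with the tail handled by the $q_2$-moment bound, a large-block/small-block/boundary-block decomposition coupled to independent copies via Corollary 2.7 of \cite{Yu94} at total-variation cost $({A_nh_1\cdots h_d/A_n^{(1)}})\beta(\underline{A}_{n2};A_nh_1\cdots h_d)$, a discretization of $[-1/2,1/2]^d$ exploiting the Lipschitz continuity of $K$ and $f_1$ through a majorizing kernel, and Bernstein's inequality plus a union bound over the grid. The one point where you go beyond the paper is the explicit conditioning on $\sigma(\{\bm X_i\})$ to justify applying Yu's coupling when the block cardinalities are random — a detail the paper's proof invokes implicitly (relying on Lemma \ref{n summands} for the almost-sure cardinality bound) — and your treatment of it is sound.
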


\begin{proof} 
We only provide the proof of (\ref{LP-unif-R1}) since the proof of (\ref{LP-unif-R2}) is almost the same. 
Let $a_n = \sqrt{{\log n \over n^2A_n^{-1}h_1 \dots h_d}}$ and $\tau_n = \rho_n n^{1/q_2}$ with $\rho_n = (\log n)^{\iota}$ for some $\iota>0$. Define 
\begin{align*}
\hat{\Psi}_1(\bm{z}) &= {|f_2^{-1}(h_1,\dots,h_d)| \over n^2A_n^{-1}h_1 \dots h_d }\sum_{i=1}^{n}K_{Ah}(\bm{X}_i-A_n \bm{z}) \\
&\quad \times f_{1,Ah}\left(\bm{X}_i - A_n \bm{z}\right)f_{2,A}\left(\bm{X}_i - A_n \bm{z}\right)f_{3,A}\left(\bm{X}_i\right)Z_{\bm{X}_i}1\{|Z_{\bm{X}_i}| \leq \tau_n\}, \\
\hat{\Psi}_2(\bm{z}) &= {|f_2^{-1}(h_1,\dots,h_d)| \over n^2A_n^{-1}h_1 \dots h_d }\sum_{i=1}^{n}K_{Ah}(\bm{X}_i-A_n \bm{z}) \\
&\quad \times f_{1,Ah}\left(\bm{X}_i - A_n \bm{z}\right)f_{2,A}\left(\bm{X}_i - A_n \bm{z}\right)f_{3,A}\left(\bm{X}_i\right)Z_{\bm{X}_i}1\{|Z_{\bm{X}_i}| > \tau_n\}.
\end{align*}
%\begin{align*}
%\hat{\Psi}_1(\bm{z}) &= {1 \over n^2A_n^{-1}h_1 \dots h_d }\sum_{j=1}^{n}K_{Ah}(\bm{X}_j-A_n \bm{z}) \\
%&\quad \times f_{1,Ah}\left(\bm{X}_i - A_n \bm{z}\right)f_2\left(\prod_{\ell=1}^{L}\left({X_{i,j_\ell} \over A_{n,j_\ell}} - z_{j_\ell}\right)\right)f_{3,A}\left(\bm{X}_i\right)Z_{\bm{X}_i}1\{|Z_{\bm{X}_i}| \leq \tau_n\}, \\
%\hat{\Psi}_2(\bm{z}) &= {1 \over n^2A_n^{-1}h_1 \dots h_d }\sum_{j=1}^{n}K_{Ah}(\bm{X}_j-A_n \bm{z}) \\
%&\quad \times f_{1,Ah}\left(\bm{X}_i - A_n \bm{z}\right)f_2\left(\prod_{\ell=1}^{L}\left({X_{i,j_\ell} \over A_{n,j_\ell}} - z_{j_\ell}\right)\right)f_{3,A}\left(\bm{X}_i\right)Z_{\bm{X}_i}1\{|Z_{\bm{X}_i}| > \tau_n\}.
%\end{align*}
Note that 
\[
\hat{\Psi}(\bm{z}) - E[\hat{\Psi}(\bm{z})] = \hat{\Psi}_1(\bm{z}) - E[\hat{\Psi}_1(\bm{z})] + \hat{\Psi}_2(\bm{z}) - E[\hat{\Psi}_2(\bm{z})].
\]

(Step 1) First we consider the term $\hat{\Psi}_{2}(\bm{z}) - E[\hat{\Psi}_{2}(\bm{z})]$. Observe that 
\begin{align*}
P\left(\sup_{\bm{z} \in R_0}|\hat{\Psi}_2(\bm{z})|>a_{n}\right) &\leq P\left(|Z_{\bm{X}_{i}}| > \tau_{n}\ \text{for some $i=1,\hdots,n$}\right)\\
&\leq \tau_{n}^{-q_2}\sum_{i=1}^{n}E\left[E_{\cdot|\bm{X}}[|Z_{\bm{X}_{i}}|^{q_2}]\right] \leq n\tau_{n}^{-q_2} = \rho_{n}^{-q_2} \to 0.
\end{align*}
Further, for $\bm{z} \in [-1/2,1/2]^d$,
\begin{align*}
&E\left[\left|\hat{\Psi}_2(\bm{z})\right|\right]\\ 
&\leq {|f_2^{-1}(h_1,\dots,h_d)| \over n^2A_n^{-1}h_1 \dots h_d}\sum_{i=1}^{n}E\left[|K_{Ah}(\bm{X}_i-A_n \bm{z})| \right. \\
&\left. \quad \times \left|f_{1,Ah}\left(\bm{X}_i - A_n \bm{z}\right)f_{2,A}\left(\bm{X}_i - A_n \bm{z}\right)\right|f_{3,A}\left(\bm{X}_i\right)E_{\cdot \mid \bm{X}}[|Z_{\bm{X}_i}|1\{|Z_{\bm{X}_i}| > \tau_n\}]\right]\\
&\lesssim {nA_n^{-1}|f_2^{-1}(h_1,\dots,h_d)| \over n^2A_n^{-1}h_1 \dots h_d\tau_n^{q_2-1}}\int_{R_n} |K_{Ah}(\bm{x}-A_n \bm{z})|  \left|f_{1,Ah}\left(\bm{x} - A_n \bm{z}\right)f_{2,A}\left(\bm{X}_i - A_n \bm{z}\right)\right|\\
&\quad \times f_{3,A}\left(\bm{x}\right)g(\bm{x}/A_n)d\bm{x}\\
& = {|f_2^{-1}(h_1,\dots,h_d)| \over nA_n^{-1}\tau_n^{q_2-1}}\int_{\bm{h}^{-1}(R_0-\bm{z})} |K(\bm{v})|  \left|f_{1}\left(\bm{v}\right)f_{2}\left(\bm{v} \circ \bm{h}\right)\right|f_{3}\left(\bm{z} + \bm{v} \circ \bm{h}\right)g(\bm{z} + \bm{v} \circ \bm{h})d\bm{v}\\
&\lesssim {1 \over nA_n^{-1}\tau_n^{q_2-1}} \lesssim {1 \over \tau_n^{q_2-1}} \lesssim a_n. 
\end{align*}
Then we have
\[
\sup_{\bm{z} \in R_0}\left|\hat{\Psi}(\bm{z}) - E[\hat{\Psi}(\bm{z})]\right| = O_p(a_n). 
\]

(Step 2) Now we consider the term $\hat{\Psi}_{1}(\bm{z}) - E[\hat{\Psi}_{1}(\bm{z})]$.

Define 
\begin{align*}
\Psi_{1,\bm{X}_i}(\bm{z}) &\!=\! K_{Ah}(\bm{X}_i\!-\!A_n \bm{z})\!f_{1,Ah}\!\left(\bm{X}_i \!-\! A_n \bm{z}\right)\!f_{2,A}\!\left(\bm{X}_i \!-\! A_n \bm{z}\right)\!f_{3,A}\!\left(\bm{X}_i\right)\!Z_{\bm{X}_i}\!1\{|Z_{\bm{X}_i}| \!\leq\! \tau_n\}\\
& - E\!\left[K_{Ah}(\bm{X}_i\!-\!A_n \bm{z})\!f_{1,Ah}\!\left(\bm{X}_i \!- \!A_n \bm{z}\right)\!f_{2,A}\!\left(\bm{X}_i \!-\! A_n \bm{z}\right)\!f_{3,A}\!\left(\bm{X}_i\right)\!Z_{\bm{X}_i}\!1\{|Z_{\bm{X}_i}| \!\leq \!\tau_n\}\right]. 
\end{align*}
Observe that 
\begin{align*}
\sum_{i=1}^{n}\Psi_{1,\bm{X}_i}(\bm{z})& \!=\! \sum_{\bm{\ell} \in L_{n1}(\bm{z})}\!\!\!\!\Psi_1^{(\bm{\ell};\bm{\Delta}_0)}(\bm{z}) \!+\! \sum_{\bm{\Delta} \neq \bm{\Delta}_0}\sum_{\bm{\ell} \in L_{n1}(\bm{z})}\!\!\!\!\Psi_1^{(\bm{\ell};\bm{\Delta})}(\bm{z}) \!+ \!\sum_{\bm{\Delta} \in \{1,2\}^d}\sum_{\bm{\ell} \in L_{n2}(\bm{z})}\!\!\!\!\Psi_1^{(\bm{\ell};\bm{\Delta})}(\bm{z}),
\end{align*}
where 
\[
\Psi_1^{(\bm{\ell};\bm{\Delta})}(\bm{z}) = \sum_{i=1}^{n}\Psi_{1,\bm{X}_i}(\bm{z})1\{\bm{X}_i \in \Gamma_{n,\bm{z}}(\bm{\ell};\bm{\Delta}) \cap R_n \cap (\bm{h}R_n + A_n \bm{z})\}. 
\]
For $\bm{\Delta} \in \{1,2\}^d$, let $\{\tilde{\Psi}_1^{(\bm{\ell};\bm{\Delta})}(\bm{z})\}_{\bm{\ell} \in L_{n1}(\bm{z}) \cup L_{n2}(\bm{z})}$ be independent random variables such that $\Psi_1^{(\bm{\ell};\bm{\Delta})}(\bm{z}) \stackrel{d}{=} \tilde{\Psi}_1^{(\bm{\ell};\bm{\Delta})}(\bm{z})$. Applying Lemma \ref{indep_lemma} below with $M_h = 1$, $m \sim \left({A_n h_1 \dots h_d \over A_n^{(1)}}\right)$ and $\tau \sim \beta(\underline{A}_{n2}; A_n h_1 \dots h_d)$, we have that for $\bm{\Delta} \in \{1,2\}^d$, 
\begin{align}
&\sup_{t >0}\left|P\left(\left|\sum_{\bm{\ell} \in L_{n1}(\bm{z})}\Psi_1^{(\bm{\ell};\bm{\Delta})}(\bm{z})\right|>t\right) - P\left(\left|\sum_{\bm{\ell} \in L_{n1}(\bm{z})}\tilde{\Psi}_1^{(\bm{\ell};\bm{\Delta})}(\bm{z})\right|>t\right)\right| \nonumber \\ 
&\quad \lesssim \left({A_n h_1 \dots h_d \over A_n^{(1)}}\right)\beta(\underline{A}_{n2}; A_n h_1 \dots h_d), \label{Psi1-beta-block1}\\
&\sup_{t >0}\left|P\left(\left|\sum_{\bm{\ell} \in L_{n2}(\bm{z})}\Psi_1^{(\bm{\ell};\bm{\Delta})}(\bm{z})\right|>t\right) - P\left(\left|\sum_{\bm{\ell} \in L_{n2}(\bm{z})}\tilde{\Psi}_1^{(\bm{\ell};\bm{\Delta})}(\bm{z})\right|>t\right)\right|\nonumber \\ 
&\quad \lesssim \left({A_n h_1 \dots h_d \over A_n^{(1)}}\right)\beta(\underline{A}_{n2}; A_n h_1 \dots h_d) \label{Psi1-beta-block2}.
\end{align}
Since $\left({A_n h_1 \dots h_d \over A_n^{(1)}}\right)\beta(\underline{A}_{n2}; A_n h_1 \dots h_d) \to 0$ as $n \to \infty$, these results imply that
\begin{align*}
\sum_{\bm{\ell} \in L_{n1}(\bm{z})}\Psi_1^{(\bm{\ell};\bm{\Delta})}(\bm{z}) &= O_p\left(\sum_{\bm{\ell} \in L_{n1}(\bm{z})}\tilde{\Psi}_1^{(\bm{\ell};\bm{\Delta})}(\bm{z})\right),\\
\sum_{\bm{\ell} \in L_{n2}(\bm{z})}\Psi_1^{(\bm{\ell};\bm{\Delta})}(\bm{z}) &= O_p\left(\sum_{\bm{\ell} \in L_{n2}(\bm{z})}\tilde{\Psi}_1^{(\bm{\ell};\bm{\Delta})}(\bm{z})\right). 
\end{align*}
Now we show $\sup_{\bm{z} \in R_0}\left|\hat{\Psi}_{1}(\bm{z}) - E[\hat{\Psi}_1(\bm{z})]\right| = O_p\left(a_{n}\right)$. Cover the region $R_0$ with $N \leq (h_1 \dots h_d)^{-1}a_{n}^{-d}$ balls $B_k = \{\bm{z} \in \mathbb{R}^{d}: |z_j - z_{k,j}| \leq a_{n}h_j\}$ and use $\bm{z}_k = (z_{k,1},\dots, z_{k,d})$ to denote the mid point of $B_k$, $k=1,\dots,N$. In addition, let $K^{\ast}(\bm{v}) = C^{\ast}\prod_{j=1}^{d}I(|v_{j}| \leq 2C_{K})$ for $\bm{v} \in \mathbb{R}^d$ and sufficiently large $C^{\ast}>0$. Note that for $\bm{z} \in B_k$ and sufficiently large $n$,
\begin{align*}
&\left|K_{Ah}\left(\bm{X}_i - A_n \bm{z}\right)f_{1,Ah}(\bm{X}_i - A_n \bm{z})   - K_{Ah}\left(\bm{X}_i - A_n \bm{z}_k\right)f_{1,Ah}(\bm{X}_i - A_n \bm{z}_k)\right|\\
&\quad \leq a_{n}K_{Ah}^{\ast}\left(\bm{X}_i - A_n \bm{z}_k\right).
\end{align*} 
%\begin{align*}
%&\left|K_{Ah}\left(\bm{X}_i - A_n \bm{z}\right)f_{1,Ah}(\bm{X}_i - A_n \bm{z})f_2\left(\prod_{\ell=1}^{L}\left({X_{i,j_\ell} \over A_{n,j_\ell}} - z_{j_\ell}\right)\right) \right.\\
%&\left. \quad  - K_{Ah}\left(\bm{X}_i - A_n \bm{z}_n\right)f_{1,Ah}(\bm{X}_i - A_n \bm{z}_n)f_2\left(\prod_{\ell=1}^{L}\left({X_{i,j_\ell} \over A_{n,j_\ell}} - z_{j_\ell,n}\right)\right)\right|\\
%&\quad \leq a_{n}K_{h}^{\ast}\left(\bm{X}_i - A_n \bm{z}_n\right)
%\end{align*} 

For $\bm{\ell} \in L_{n1}(\bm{z}) \cup L_{n2}(\bm{z})$ and $\bm{\Delta} \in \{1,2\}^d$, define
\begin{align*}
\Psi_2^{(\bm{\ell};\bm{\Delta})}(\bm{z}) = \sum_{i=1}^{n}\Psi_{2,\bm{X}_i}(\bm{z})1\{\bm{X}_i \in \Gamma_{n,\bm{z}}(\bm{\ell};\bm{\Delta}) \cap R_n \cap (\bm{h}R_n + A_n \bm{z})\},
\end{align*}
where 
\begin{align*}
\Psi_{2,\bm{X}_i}(\bm{z}) &= K_{Ah}^{\ast}\left(\bm{X}_i - A_n \bm{z}_n\right)f_{2,A}\left(\bm{X}_i - A_n \bm{z}\right)f_{3,A}\left(\bm{X}_i\right)Z_{\bm{X}_i}1\{|Z_{\bm{X}_i}| \leq \tau_n\}\\
&\quad - E\left[K_{Ah}^{\ast}\left(\bm{X}_i - A_n \bm{z}_n\right)f_{2,A}\left(\bm{X}_i - A_n \bm{z}\right)f_{3,A}\left(\bm{X}_i\right)Z_{\bm{X}_i}1\{|Z_{\bm{X}_i}| \leq \tau_n\}\right]. 
\end{align*}
Moreover, define
\begin{align*}
\bar{\Psi}_1(\bm{z}) &\!=\! {|f_2^{-1}(h_1,\dots,h_d)| \over n^2A_n^{-1}h_1 \dots h_d }\!\sum_{i=1}^{n}\!K_{Ah}^{\ast}(\bm{X}_i\!-\!A_n \bm{z})\!f_{2,A}\!\left(\bm{X}_i \!-\! A_n \bm{z}\right)\!f_{3,A}\!\left(\bm{X}_i\right)\!Z_{\bm{X}_i}\!1\{|Z_{\bm{X}_i}| \leq \tau_n\}. 
\end{align*}
Observe that for $\bm{z} \in R_{0}$,
\begin{align*}
E\left[|\bar{\Psi}_1(\bm{z})|\right] &\lesssim {A_n^{-1}|f_2^{-1}(h_1,\dots,h_d)| \over nA_n^{-1}h_1 \dots h_d}\!\!\!\int_{R_n}\!\!\!\! |K_{Ah}^{\ast}(\bm{x} \!-\! A_n \bm{z})f_{2,A}(\bm{x} \!-\! A_n \bm{z})f_{3,A}(\bm{x})|g(\bm{x}/A_n)d\bm{x}\\
&= {|f_2^{-1}(h_1,\dots,h_d)| \over nA_n^{-1}}\!\!\!\int_{\bm{h}^{-1}(R_0 - \bm{z})}\!\!\!\!\!\!\!\!\!\!|K^{\ast}(\bm{v})||f_2(\bm{v} \circ \bm{h})||f_3(\bm{z} + \bm{v}\circ \bm{h})g(\bm{z} + \bm{v}\circ \bm{h})|d\bm{v}\\
&\lesssim {1 \over nA_n^{-1}} \leq M.
\end{align*}
for sufficiently large $M>0$.
Then we have
\begin{align*}
&\sup_{\bm{z} \in B_k}\left|\hat{\Psi}_{1}(\bm{z}) - E[\hat{\Psi}_{1}(\bm{z})]\right|\\
&\leq \left|\hat{\Psi}_{1}(\bm{z}_k) - E[\hat{\Psi}_{1}(\bm{z}_k)]\right| + a_{n}\left(\left|\bar{\Psi}_{1}(\bm{z}_k)\right| + E\left[\left|\bar{\Psi}_{1}(\bm{z}_k)\right|\right]\right)\\
&\leq \left|\hat{\Psi}_{1}(\bm{z}_k) - E[\hat{\Psi}_{1}(\bm{z}_k)]\right| + \left|\bar{\Psi}_{1}(\bm{z}_k) - E[\bar{\Psi}_{1}(\bm{z}_k)]\right| + 2Ma_{n}\\
&\leq \!{|f_2^{-1}\!(h_1,\dots,h_d)\!| \over n^2\!A_n^{-1}\!h_1 \dots h_d } \!\!\!\left(\left|\sum_{\bm{\ell} \in L_{n1}\!(\bm{z}_k)} \!\!\!\!\!\!\!\!\! \Psi_1^{(\bm{\ell};\bm{\Delta}_0)}\!(\! \bm{z}_k \!)\right| \!\!+\!\!\! \sum_{\bm{\Delta} \neq \bm{\Delta}_{0}} \! \left|\sum_{\bm{\ell} \in L_{n1}\!(\bm{z}_k)} \!\!\!\!\!\!\!\!\! \Psi_1^{(\bm{\ell};\bm{\Delta})}\!(\! \bm{z}_k \!)\right| \!\!+\!\!\! \sum_{\bm{\Delta} \in \{1,2\}^d}\!\left|\sum_{\bm{\ell} \in L_{n2}\!(\bm{z}_k)} \!\!\!\!\!\!\!\!\! \Psi_1^{(\bm{\ell};\bm{\Delta})}\!(\! \bm{z}_k \!)\right|\right) \\
&+\! {|f_2^{-1}\!(h_1,\dots,h_d)\!| \over n^2\!A_n^{-1}\!h_1 \dots h_d } \!\!\!\left(\left|\sum_{\bm{\ell} \in L_{n1}\!(\bm{z}_k)} \!\!\!\!\!\!\!\!\! \Psi_2^{(\bm{\ell};\bm{\Delta}_0)}\!(\! \bm{z}_k \!)\right| \!\!+\!\!\! \sum_{\bm{\Delta} \neq \bm{\Delta}_{0}}\!\left|\sum_{\bm{\ell} \in L_{n1}\!(\bm{z}_k)} \!\!\!\!\!\!\!\!\! \Psi_2^{(\bm{\ell};\bm{\Delta})}\!(\! \bm{z}_k \!)\right| \!\!+\!\!\! \sum_{\bm{\Delta} \in \{1,2\}^d}\!\left|\sum_{\bm{\ell} \in L_{n2}\!(\bm{z}_k)} \!\!\!\!\!\!\!\!\! \Psi_2^{(\bm{\ell};\bm{\Delta})}\!(\! \bm{z}_k \!)\right|\right) \\
&+ 2Ma_{n}. 
\end{align*}
For $\bm{\Delta} \in \{1,2\}^d$, let $\{\tilde{\Psi}_2^{(\bm{\ell};\bm{\Delta})}(\bm{z})\}_{\bm{\ell} \in L_{n1}(\bm{z}) \cup L_{n2}(\bm{z})}$ be independent random variables such that $\Psi_2^{(\bm{\ell};\bm{\Delta})}(\bm{z}) \stackrel{d}{=} \tilde{\Psi}_2^{(\bm{\ell};\bm{\Delta})}(\bm{z})$. From (\ref{Psi1-beta-block1}) and (\ref{Psi1-beta-block2}), and applying Lemma \ref{indep_lemma} below to $\{\tilde{\Psi}_2^{(\bm{\ell};\bm{\Delta})}(\bm{z})\}_{\bm{\ell} \in L_{n1}(\bm{z}) \cup L_{n2}(\bm{z})}$, we have
\begin{align*}
&P\left(\sup_{\bm{z} \in R_0}\left|\hat{\Psi}_{1}(\bm{z}) - E[\hat{\Psi}_{1}(\bm{z})]\right|>2^{d+3}Ma_{n}\right)\\
&\leq N\max_{1 \leq k \leq N}P\left(\sup_{\bm{z} \in B_{k}}\left|\hat{\Psi}_{1}(\bm{z}) - E[\hat{\Psi}_{1}(\bm{z})]\right|>2^{d+3}Ma_{n}\right)\\
&\leq \sum_{\bm{\Delta} \in \{1,2\}^{d}}\hat{Q}_{n1}(\bm{\Delta}) + \sum_{\bm{\Delta} \in \{1,2\}^{d}}\hat{Q}_{n2}(\bm{\Delta}) + \sum_{\bm{\Delta}\in \{1,2\}^{d}}\bar{Q}_{n1}(\bm{\Delta}) + \sum_{\bm{\Delta}\in \{1,2\}^{d}}\bar{Q}_{n2}(\bm{\Delta})\\
&\quad + 2^{d+2}N\left({A_{n}h_1 \dots h_d \over A_n^{(1)}}\right)\beta(\underline{A}_{n2};A_{n}h_1 \dots h_d),
\end{align*}
where 
\begin{align*}
\hat{Q}_{nj}(\bm{\Delta}) &= N\max_{1 \leq k \leq N}P\left(\left|\sum_{\bm{\ell} \in L_{nj}(\bm{z}_k)}\tilde{\Psi}_1^{(\bm{\ell};\bm{\Delta})}(\bm{z}_k)\right|>Ma_{n}{n^2A_n^{-1}h_1 \dots h_d \over |f_2^{-1}(h_1,\dots,h_d)|}\right),\ j=1,2,\\
\bar{Q}_{nj}(\bm{\Delta}) &= N\max_{1 \leq k \leq N}P\left(\left|\sum_{\bm{\ell} \in L_{nj}(\bm{z}_k)}\tilde{\Psi}_2^{(\bm{\ell};\bm{\Delta})}(\bm{z}_k)\right|>Ma_{n}{n^2A_n^{-1}h_1 \dots h_d \over|f_2^{-1}(h_1,\dots,h_d)|}\right),\ j=1,2.
\end{align*}
Now we restrict our attention to $\hat{Q}_{n1}(\bm{\Delta})$, $\bm{\Delta} \neq \bm{\Delta}_{0}$. The proofs for other cases are similar. Note that
\begin{align*}
&P\left(\left|\sum_{\bm{\ell} \in L_{n1}(\bm{z}_k)}\tilde{\Psi}_1^{(\bm{\ell};\bm{\Delta})}(\bm{z}_k)\right|>Ma_{n}{n^2A_n^{-1}h_1 \dots h_d \over |f_2^{-1}(h_1,\dots,h_d)|}\right)\\ 
&\leq 2P\left(\sum_{\bm{\ell} \in L_{n1}(\bm{z}_k)}\tilde{\Psi}_1^{(\bm{\ell};\bm{\Delta})}(\bm{z}_k)>Ma_{n}{n^2A_n^{-1}h_1 \dots h_d \over |f_2^{-1}(h_1,\dots,h_d)|}\right).
\end{align*}
Observe that $\tilde{\Psi}_1^{(\bm{\ell};\bm{\Delta})}(\bm{z}_k)$ are zero-mean independent random variables and  
\begin{align}
\left|\tilde{\Psi}_1^{(\bm{\ell};\bm{\Delta})}(\bm{z}_k)\right| &\leq C_{\tilde{\Psi}_1}(\overline{A}_{n1})^{d-1}\overline{A}_{n2}nA_n^{-1}|f_2(h_1,\dots,h_d)|\tau_{n},\ a.s.\ (\text{from Lemma \ref{n summands}})\nonumber \\
E\left[\left(\tilde{\Psi}_1^{(\bm{\ell};\bm{\Delta})}(\bm{z}_k)\right)^{2}\right] &\leq C_{\tilde{\Psi}_1}(\overline{A}_{n1})^{d-1}\overline{A}_{n2}n^2A_n^{-2}f_2^2(h_1,\dots,h_d), \label{bound-L2}
\end{align}
for some $C_{\tilde{\Psi}_1}>0$, where (\ref{bound-L2}) can be shown by applying the same argument in (Step 2-1) in the proof of Theorem 4.1. %\ref{thm: LP-CLT}. 
Then Lemma \ref{Bernstein} yields that 
\begin{align*}
P\left(\sum_{\bm{\ell} \in L_{n1}(\bm{z}_k)}\tilde{\Psi}_1^{(\bm{\ell};\bm{\Delta})}(\bm{z}_k)>Ma_{n}{n^2A_n^{-1}h_1 \dots h_d \over |f_2^{-1}(h_1,\dots,h_d)|}\right) &\leq \exp \left(-{{M^2n^2A_n^{-1}h_1 \dots h_d \log n \over 2|f_2^{-1}(h_1,\dots,h_d)|^2} \over E_{n1} + E_{n2} }\right),
\end{align*}
where 
\begin{align*}
E_{n1} &=  C_{\tilde{\Psi}_1}\!\!\left({A_{n}h_1 \dots h_d \over A_n^{(1)}}\right)\!\!(\overline{A}_{n1})^{d-1}\overline{A}_{n2}n^{2}A_n^{-2}f_2^2(h_1,\dots,h_d),\\
E_{n2} &= {MC_{\tilde{\Psi}_1}n^2A_n^{-3/2}(h_1\dots h_d)^{1/2}(\log n)^{1/2}(\overline{A}_{n1})^{d-1}\overline{A}_{n2}\tau_{n} \over 3|f_2^{-1}(h_1,\dots,h_d)|^2}.
\end{align*}
Since
\begin{align*}
{M^2n^2A_n^{-1}h_1 \dots h_d \log n \over 2|f_2^{-1}(h_1,\dots,h_d)|^2E_{n1}} &= {M^2 \over 2C_{\tilde{\Psi}_1}}\left({A_n^{(1)} \over (\overline{A}_{n1})^{d-1}\overline{A}_{n2}}\right)\log n,\\
{M^2n^2A_n^{-1}h_1 \dots h_d \log n \over 2|f_2^{-1}(h_1,\dots,h_d)|^2E_{n2}} &= {3M \over 2C_{\tilde{\Psi}_1}}{A_n^{1/2}(h_1 \dots h_d)^{1/2} \over n^{1/q_2}(\overline{A}_{n1})^{d-1}\overline{A}_{n2}(\log n)^{-1/2 + \iota}},
\end{align*}
by taking $M>0$ sufficiently large, we obtain the desired result. 
\end{proof}

\subsection{Proof of Theorem 5.1}\label{sec: Thm LPR-unif proof}

\begin{proof}
Define 
\begin{align*}
S_n(\bm{z}) &= {1 \over nh_1 \dots h_d}\sum_{i=1}^{n}K_{Ah}\left(\bm{X}_i-A_n \bm{z}\right)H^{-1}
\left(
\begin{array}{c}
1 \\
\check{(\bm{X}_i-A_n\bm{z})}
\end{array}
\right)
(1\ \check{(\bm{X}_i-A_n\bm{z})'})H^{-1},\\
V_n(\bm{z}) &= {1 \over nh_1 \dots h_d}\sum_{i=1}^{n}K_{Ah}\left(\bm{X}_i-A_n \bm{z}\right)H^{-1}
\left(
\begin{array}{c}
1 \\
\check{(\bm{X}_i-A_n\bm{z})}
\end{array}
\right)(e_{n,i}+\varepsilon_{n,i}),\\
B_n(\bm{z}) &= {1 \over nh_1 \dots h_d}\sum_{i=1}^{n}K_{Ah}\left(\bm{X}_i-A_n \bm{z}\right)H^{-1}
\left(
\begin{array}{c}
1 \\
\check{(\bm{X}_i-A_n\bm{z})}
\end{array}
\right) \\
&\quad \times \sum_{1 \leq j_1 \leq \dots \leq j_{p+1}\leq d}{1 \over \bm{s}_{j_1\dots j_{p+1}}!}\partial_{j_1,\dots,j_{p+1}}m(\dot{\bm{X}}_i/A_n)\prod_{\ell=1}^{p+1}\left({X_{i,j_\ell} \over A_{n,j_\ell}}-z_{j_\ell}\right).
\end{align*}
Note that 
\begin{align*}
H(\hat{\bm{\beta}}(\bm{z}) - \bm{M}(\bm{z})) &= S_n^{-1}(\bm{z})(V_n(\bm{z}) + B_n(\bm{z})).
\end{align*}
Applying Proposition \ref{prp:LP-unif} (\ref{LP-unif-R2}) to $e'_{j_{1,1} \dots j_{1,L_1}}S_n(\bm{z})e_{j_{2,1} \dots j_{2,L_2}}$ with
\[
f_1(\bm{x}) = e'_{j_{1,1} \dots j_{1,L_1}}
\left(
\begin{array}{c}
1 \\
\check{\bm{x}}
\end{array}
\right)
(1\ \check{\bm{x}}')e_{j_{2,1} \dots j_{2,L_2}},\ f_2(\bm{x}) = 1,\ f_3(\bm{x})=1,
\]
we have that  
\begin{align}
&\sup_{\bm{z} \in \mathrm{T}_n}|e'_{j_{1,1} \dots j_{1,L_1}}(S_n(\bm{z})-g(\bm{z})S)e_{j_{2,1} \dots j_{2,L_2}}| \nonumber \\
&\leq \sup_{\bm{z} \in \mathrm{T}_n}|e'_{j_{1,1} \dots j_{1,L_1}}(S_n(\bm{z})\!-\!E[S_n(\bm{z})])e_{j_{2,1} \dots j_{2,L_2}}| \nonumber \\
&\quad + \sup_{\bm{z} \in \mathrm{T}_n}|e'_{j_{1,1} \dots j_{1,L_1}}(E[S_n(\bm{z})]\!-\!g(\bm{z})S)e_{j_{2,1} \dots j_{2,L_2}}| \nonumber \\
&= O_p\left(\sqrt{\log n \over nh_1 \dots h_d}\right) + o(1) = o_p(1). \label{unif-Sn-rate}
\end{align}

Applying Proposition \ref{prp:LP-unif} (\ref{LP-unif-R1}) to $A_n n^{-1}e'_{j_1 \dots j_L}V_n(\bm{z})$ with 
\[
f_1(\bm{x}) = e'_{j_1 \dots j_L}
\left(
\begin{array}{c}
1 \\
\check{\bm{x}}
\end{array}
\right)
,\ f_2(\bm{x}) = 1, (f_3(\bm{x}),Z_{\bm{X}_i}) \in \left\{(\eta(\bm{x}), e(\bm{X}_i)), (\sigma_{\varepsilon}(\bm{x}), \varepsilon_i)\right\},
\]
we have that 
\begin{align}
{n \over A_n}\sup_{\bm{z} \in \mathrm{T}_n}\!\left|{A_n \over n}e'_{j_1 \dots j_L}\!(V_n(\bm{z})\!-\!E[V_n(\bm{z})])\right| &\leq \! {n \over A_n}\sup_{\bm{z} \in \mathrm{T}_n}\!\left|{A_n \over n}e'_{j_1 \dots j_L}\!V_n(\bm{z})\right| \!= O_p\!\left(\!\!\sqrt{ \log n \over A_n h_1 \dots h_d}\right). \label{unif-Vn-rate}
\end{align}

Applying Proposition \ref{prp:LP-unif} (\ref{LP-unif-R2}) to $e'_{j_1 \dots j_L}B_n(\bm{z})$ with 
\begin{align*}
f_1(\bm{x}) &= e'_{j_1 \dots j_L}
\left(
\begin{array}{c}
1 \\
\check{\bm{x}}
\end{array}
\right)
,\ f_2(\bm{x}) = \prod_{\ell=1}^{L}x_{j_\ell},\ f_3(\bm{x}) = \sum_{1 \leq j_1 \leq \dots \leq j_{p+1}\leq d}{1 \over \bm{s}_{j_1\dots j_{p+1}}!}\partial_{j_1,\dots,j_{p+1}}m(\bm{x}),
\end{align*}
we have that
\begin{align}
\sup_{\bm{z} \in \mathrm{T}_n}\left|e'_{j_1 \dots j_L}B_n(\bm{z})\right| &\leq \sup_{\bm{z} \in \mathrm{T}_n}\left|e'_{j_1 \dots j_L}(B_n(\bm{z})- E[B_n(\bm{z})])\right| + \sup_{\bm{z} \in \mathrm{T}_n}\left|e'_{j_1 \dots j_L}E[B_n(\bm{z})]\right|\nonumber \\
&= O_p\left(\prod_{\ell=1}^{L}h_{j_\ell}\sqrt{\log n \over nh_1 \dots h_d}\right) + O\left(\sum_{1 \leq j_1 \leq \dots \leq j_{p+1}\leq d} \prod_{\ell=1}^{p+1}h_{j_\ell}\right) \label{unif-Bn-rate}
\end{align}
Combining (\ref{unif-Sn-rate})-(\ref{unif-Bn-rate}), we have that
\begin{align*}
&\sup_{\bm{z} \in \mathrm{T}_n}\left|\partial_{j_1 \dots j_L}\hat{m}(\bm{z}) - \partial_{j_1 \dots j_L}m(\bm{z})\right|\\ 
&\leq \left(\prod_{\ell=1}^{L}h_{j_\ell}\right)^{-1}{1 \over \inf_{\bm{z} \in R_0}g(\bm{z})}\sup_{\bm{z} \in \mathrm{T}_n}\left|e'_{j_1 \dots j_L}S^{-1}(V_n(\bm{z}) + B_n(\bm{z}))\right|\\
&\quad + \left(\prod_{\ell=1}^{L}h_{j_\ell}\right)^{-1}\sup_{\bm{z} \in \mathrm{T}_n}\left|e'_{j_1 \dots j_L}(S_n^{-1}(\bm{z}) - g^{-1}(\bm{z})S^{-1})(V_n(\bm{z}) + B_n(\bm{z}))\right|\\
&\lesssim \left(\prod_{\ell=1}^{L}h_{j_\ell}\right)^{-1}\left(\max_{1 \leq j_1\leq \dots \leq j_L \leq d, 0\leq L\leq p}\sup_{\bm{z} \in \mathrm{T}_n}|e'_{j_1 \dots j_L}V_n(\bm{z})| \right. \\
&\left. \quad+ \max_{1 \leq j_1\leq \dots \leq j_L \leq d, 0\leq L\leq p}\sup_{\bm{z} \in \mathrm{T}_n}|e'_{j_1 \dots j_L}B_n(\bm{z})|\right)\\
&= O_p\left( {\sum_{1 \leq j_1 \leq \dots \leq j_{p+1}\leq d}\prod_{\ell=1}^{p+1}h_{j_\ell} \over \prod_{\ell=1}^{L}h_{j_\ell}} + \sqrt{{\log n \over A_n h_1 \dots h_d \left(\prod_{\ell=1}^{L}h_{j_\ell}\right)^2 }}\right).
\end{align*}
\end{proof}

\subsection{Proof of Proposition 5.1}\label{sec: Prp5.1-proof}
\begin{proof}

It is easy to see that $\hat{g}(\bm{0}) \stackrel{p}{\to} g(\bm{0})$ as $n \to \infty$. For $\hat{W}_{n,1}(\bm{0})$, applying Theorem 5.1, %\ref{thm:LPR-unif}, 
we have 
\begin{align*}
&\hat{W}_{n,1}(\bm{0})\\ 
&= {A_n \over n^2h_1 \dots h_d}\sum_{i,j=1}^{n}K_{Ah}(\bm{X}_i)K_{Ah}(\bm{X}_j)\bar{K}_b(\bm{X}_i - \bm{X}_j)(e_{n,i} + \varepsilon_{n,i})(e_{n,j} + \varepsilon_{n,j}) + o_p(1)\\
&= {A_n \over n^2h_1 \dots h_d}\sum_{i,j=1}^{n}K_{Ah}(\bm{X}_i)K_{Ah}(\bm{X}_j)\bar{K}_b(\bm{X}_i - \bm{X}_j)e_{n,i}e_{n,j}\\
&\quad + {2A_n \over n^2h_1 \dots h_d}\sum_{i,j=1}^{n}K_{Ah}(\bm{X}_i)K_{Ah}(\bm{X}_j)\bar{K}_b(\bm{X}_i - \bm{X}_j)e_{n,i}\varepsilon_{n,j}\\
&\quad + {A_n \over n^2h_1 \dots h_d}\sum_{i,j=1}^{n}K_{Ah}(\bm{X}_i)K_{Ah}(\bm{X}_j)\bar{K}_b(\bm{X}_i - \bm{X}_j)\varepsilon_{n,i}\varepsilon_{n,j} + o_p(1)\\
&=: W_{n,1} + W_{n,2} + W_{n,3} + o_p(1).
\end{align*}

For $W_{n,3}$, observe that 
\begin{align*}
W_{n,3} &= {A_n \over n^2h_1 \dots h_d}\sum_{i=1}^{n}K_{Ah}^2(\bm{X}_i)\varepsilon_{n,i}^2\\ 
&\quad + {A_n \over n^2h_1 \dots h_d}\sum_{i \neq j}^{n}K_{Ah}(\bm{X}_i)K_{Ah}(\bm{X}_j)\bar{K}_b(\bm{X}_i - \bm{X}_j)\varepsilon_{n,i}\varepsilon_{n,j}\\
&=: W_{n,31} + W_{n,32}. 
\end{align*}

For $W_{n,31}$, we have 
\begin{align*}
E[W_{n,31}] &= {A_n \over nh_1 \dots h_d}\int K_{Ah}^2(\bm{x})\sigma_{\varepsilon}^2(\bm{x}/A_n)A_n^{-1}g(\bm{x}/A_n)d\bm{x}\\
&= {A_n \over n}\int K^2(\bm{z})\sigma_{\varepsilon}^2(\bm{z} \circ \bm{h})g(\bm{z} \circ \bm{h})d\bm{z}\\
&= \kappa \sigma_{\varepsilon}^2(\bm{0})g(\bm{0})\kappa_0^{(2)} + o(1). 
\end{align*}
\begin{align*}
\Var(W_{n,31}) &= \left({A_n \over n^2h_1 \dots h_d}\right)^2n\Var(K_{Ah}^2(\bm{X}_1)\varepsilon_{n,1}^2)\\
&\leq \left({A_n \over n^2h_1 \dots h_d}\right)^2nE[K_{Ah}^4(\bm{X}_1)\varepsilon_{n,1}^4]\\
&\lesssim {A_n^2 \over n^3 (h_1 \dots h_d)^2}\int K_{Ah}^4(\bm{x})\sigma_{\varepsilon}^4(\bm{x}/A_n)A_n^{-1}g(\bm{x}/A_n)d\bm{x}\\
&= O\left({A_n^2 \over n^2}{1 \over nh_1 \dots h_d}\right) = o(1). 
\end{align*}
Then we have $W_{n,31} = \kappa \sigma_{\varepsilon}^2(\bm{0})g(\bm{0})\kappa_0^{(2)} + o_p(1)$. 

For $W_{n,32}$, applying similar arguments in the proof of Theorem 4.1, %\ref{thm: LP-CLT}, 
we have $E[W_{n,32}] = 0$ and 
\begin{align}
&\left({A_n \over n^2h_1 \dots h_d}\right)^{-2}E[W_{n,32}^2] \nonumber \\
&= \sum_{i \neq j, k \neq \ell}\!\!\!\!E\left[K_{Ah}(\bm{X}_i)K_{Ah}(\bm{X}_j)\bar{K}_b^2(\bm{X}_i - \bm{X}_j)K_{Ah}(\bm{X}_k)K_{Ah}(\bm{X}_\ell)\bar{K}_b(\bm{X}_k - \bm{X}_\ell)\varepsilon_{n,i}\varepsilon_{n,j}\varepsilon_{n,k}\varepsilon_{n,\ell}\right] \nonumber \\
&= \sum_{i \neq j}E\left[K_{Ah}^2(\bm{X}_i)K_{Ah}^2(\bm{X}_j)\bar{K}_b^2(\bm{X}_i - \bm{X}_j)\varepsilon_{n,i}^2\varepsilon_{n,j}^2\right] \nonumber \\
&= n(n-1)(h_1 \dots h_d)^2\int_{\bm{h}^{-1}R_0^2} \bar{K}_b(A_n(\bm{z}_1 - \bm{z}_2) \circ \bm{h})K^2(\bm{z}_1)K^2(\bm{z}_2)\sigma_{\varepsilon}^2(\bm{z}_1 \circ \bm{h})\sigma_{\varepsilon}^2(\bm{z}_2 \circ \bm{h}) \nonumber \\
&\quad \times A_n^{-2}g(\bm{z}_1 \circ \bm{h})g(\bm{z}_2 \circ \bm{h})d\bm{z}_1 d\bm{z}_2 \nonumber \\
&= n(n-1)A_n^{-1}h_1 \dots h_d b_1 \dots b_d\int_{A_n \bm{h}R'_{\bm{h},0}/\bm{b}}\bar{K}^2(\bm{v})\left(\int_{R_{\bm{h},0}\left({\bm{v} \circ \bm{b} \over A_n\bm{h}}\right)}K^2\left(\bm{z}_2 + {\bm{v} \circ \bm{b} \over A_n\bm{h}}\right)K^2(\bm{z}_2) \right. \nonumber \\
&\left. \quad \times \sigma_{\varepsilon}^2\left(\bm{z}_2 \circ \bm{h} + {\bm{v} \circ \bm{b} \over A_n}\right)\sigma_{\varepsilon}^2\left(\bm{z}_2 \circ \bm{h}\right)g\left(\bm{z}_2 \circ \bm{h} + {\bm{v} \circ \bm{b} \over A_n}\right)g\left(\bm{z}_2 \circ \bm{h}\right)d\bm{z}_2 \right)d\bm{v} \nonumber \\
&= n(n-1)A_n^{-1}h_1 \dots h_d b_1 \dots b_d \left(\sigma_{\varepsilon}^4(\bm{0})g^2(\bm{0})\kappa_0^{(4)}\int \bar{K}^2(\bm{v})d\bm{v} + o(1)\right). \label{Vn32-new}
\end{align}
Then we have $E[W_{n,32}] = O\left({A_n \over n}{b_1 \dots b_d \over n h_1 \dots h_d}\right) = o(1)$ and this yields $W_{n,32} = o_p(1)$. The results on $W_{n,31}$ and $W_{n,32}$ yield
\begin{align}\label{Vn3-new}
W_{n,3} &\stackrel{p}{\to} \kappa \sigma_{\varepsilon}^2(\bm{0})g(\bm{0})\kappa_0^{(2)}.
\end{align}

For $W_{n,2}$, observe that
\begin{align*}
&\left({2A_n \over n^2h_1 \dots h_d}\right)^{-2}E_{\cdot|\bm{X}}[W_{n,2}^2]\\ 
&= \sum_{i=1}^{n}K_{Ah}^4(\bm{X}_i)\eta^2(\bm{X}_i/A_n)\sigma_{\varepsilon}^2(\bm{X}_j/A_n)\\
&\quad + \sum_{i\neq j}^{n}K_{Ah}^2(\bm{X}_i)K_{Ah}^2(\bm{X}_{j})\bar{K}_b(\bm{X}_i - \bm{X}_j)\eta(\bm{X}_i/A_n)\eta(\bm{X}_j/A_n)\sigma_{\varepsilon}^2(\bm{X}_j/A_n)\\
&\quad + \sum_{i \neq j}^{n}K_{Ah}(\bm{X}_i)K_{Ah}^3(\bm{X}_{j})\bar{K}_b(\bm{X}_i - \bm{X}_j)\eta(\bm{X}_i/A_n)\eta(\bm{X}_j/A_n)\sigma_{\bm{e}}(\bm{X}_i - \bm{X}_j)\sigma_{\varepsilon}^2(\bm{X}_j/A_n)\\
&\quad + \sum_{i \neq j \neq  \ell}^{n}K_{Ah}(\bm{X}_i)K_{Ah}^2(\bm{X}_{j})K_{Ah}(\bm{X}_\ell)\bar{K}_b(\bm{X}_i - \bm{X}_j)\bar{K}_b(\bm{X}_\ell - \bm{X}_j)\\
&\quad \quad \times \eta(\bm{X}_i/A_n)\eta(\bm{X}_j/A_n)\sigma_{\bm{e}}(\bm{X}_i - \bm{X}_\ell)\sigma_{\varepsilon}^2(\bm{X}_j/A_n)\\
&=: W_{n,21} + W_{n,22} + W_{n,23} + W_{n,24}. 
\end{align*}

For $W_{n,21}$, we have $E[W_{n,21}] = O(nh_1 \dots h_d)$ and this yields $\left({2A_n \over n^2h_1 \dots h_d}\right)^2E[W_{n,21}] = O\left({A_n^2 \over n^2}{1 \over nh_1 \dots h_d}\right) = o(1)$.

For $W_{n,22}$, applying similar arguments to show (\ref{Vn32-new}), we have
\begin{align*}
E[W_{n,22}] &= n(n-1)A_n^{-1}h_1 \dots h_d b_1 \dots b_d \left(\eta^2(\bm{0})\sigma_{\varepsilon}^2(\bm{0})g^2(\bm{0})\kappa_0^{(4)}\int \bar{K}(\bm{v})d\bm{v} + o(1)\right).
\end{align*}
This yields $E\left({2A_n \over n^2h_1 \dots h_d}\right)^2[W_{n,22}]  = O\left({A_n \over n}{b_1 \dots b_d \over n h_1 \dots h_d}\right) = o(1)$.

For $W_{n,23}$, applying similar arguments in the proof of Theorem 4.1, we have
\begin{align*}
E[W_{n,23}] &\lesssim \sum_{i \neq j}^{n}E\left[K_{Ah}(\bm{X}_i)K_{Ah}^3(\bm{X}_{j})\eta(\bm{X}_i/A_n)\eta(\bm{X}_j/A_n)\sigma_{\bm{e}}(\bm{X}_i - \bm{X}_j)\sigma_{\varepsilon}^2(\bm{X}_j/A_n)\right]\\
&= O\left(n^2A_n^{-1}h_1 \dots h_d \right).
\end{align*}
This yields $\left({2A_n \over n^2h_1 \dots h_d}\right)^2E[W_{n,23}] = O\left({A_n \over  n}{1 \over n h_1 \dots h_d}\right) = o(1)$.

For $W_{n,24}$, applying similar arguments in the proof of Theorem 4.1, we have
\begin{align*}
&E[W_{n,24}] \\
&= n(n-1)(n-2)(h_1 \dots h_d)^3\int_{\bm{h}^{-1}R_0^3} \bar{K}_b(A_n(\bm{z}_1 - \bm{z}_2) \circ \bm{h})\bar{K}_b(A_n(\bm{z}_3 - \bm{z}_2) \circ \bm{h})\\
&\quad \times \sigma_{\bm{e}}(A_n(\bm{z}_1 - \bm{z}_3) \circ \bm{h})K(\bm{z}_1)K^2(\bm{z}_2)K(\bm{z}_3)\eta(\bm{z}_1 \circ \bm{h})\eta(\bm{z}_2 \circ \bm{h})\sigma_{\varepsilon}^2\left(\bm{z}_2 \circ \bm{h}\right) \\
&\quad \times A_n^{-3}g(\bm{z}_1 \circ \bm{h})g(\bm{z}_2 \circ \bm{h})g(\bm{z}_3 \circ \bm{h})d\bm{z}_1 d\bm{z}_2d\bm{z}_3\\
&= n(n-1)(n-2)A_n^{-1}(h_1 \dots h_d)^2\int_{\bm{h}^{-1}R_0}K^2(\bm{z}_2)\sigma_{\varepsilon}^2(\bm{z}_2 \circ \bm{h})g(\bm{z}_2 \circ \bm{h})\\
&\quad \left\{\int_{A_n \bm{h}R'_{\bm{h},0}}\sigma_{\bm{e}}(\bm{v})\left(\int_{R_{\bm{h},0}\left({\bm{v} \over A_n\bm{h}}\right)}\bar{K}\left({\bm{v} + A_n(\bm{z}_3-\bm{z}_2) \circ \bm{h} \over \bm{b}}\right)\bar{K}\left({A_n(\bm{z}_3-\bm{z}_2) \circ \bm{h} \over \bm{b}}\right) \right. \right.  \\
&\left. \left .\quad \times K\!\!\left(\!\bm{z}_3 + {\bm{v}  \over A_n\bm{h}}\!\right)\!K(\bm{z}_3)\eta \! \left(\!\bm{z}_3\! \circ \! \bm{h} + {\bm{v} \over A_n}\!\right)\!\eta\left(\bm{z}_3\! \circ \! \bm{h}\right)\!g\!\left(\bm{z}_3\! \circ \! \bm{h} + {\bm{v}  \over A_n}\right)\!g\left(\bm{z}_3\! \circ \! \bm{h}\right)d\bm{z}_3\! \right)\!d\bm{v}\! \right\}\! d\bm{z}_2\\
&= n(n-1)(n-2)A_n^{-2}h_1 \dots h_db_1 \dots b_d\int_{\bm{h}^{-1}R_0}K^2(\bm{z}_2)\sigma_{\varepsilon}^2(\bm{z}_2 \circ \bm{h})g(\bm{z}_2 \circ \bm{h})\\
&\quad \left\{\int_{A_n \bm{h}R'_{\bm{h},0}}\sigma_{\bm{e}}(\bm{v})\left(\int_{A\bm{h}R_{\bm{h},0}\left({\bm{v} \over A_n\bm{h}}\right)/\bm{b}}\bar{K}\left({\bm{v} \over \bm{b}} + \bm{w} - {A_n\bm{z}_2 \circ \bm{h} \over \bm{b}}\right)\bar{K}\left(\bm{w} - {A_n\bm{z}_2 \circ \bm{h} \over \bm{b}}\right) \right. \right.  \\
&\left. \left .\quad \times K\!\!\left(\!{\bm{w}\circ \bm{b} \over A_n\bm{h}} + {\bm{v}  \over A_n\bm{h}}\!\right)\!K\!\left({\bm{w}\! \circ\! \bm{b} \over A_n\bm{h}}\right)\!\eta \! \left(\!{\bm{w}\! \circ \! \bm{b} \over A_n} + {\bm{v} \over A_n}\!\right)\!\eta\!\left({\bm{w} \! \circ \! \bm{b} \over A_n}\right) \right. \right. \\
&\left. \left. \quad \times g\!\left({\bm{w} \! \circ \! \bm{b} \over A_n} + {\bm{v}  \over A_n}\right)\!g\! \left({\bm{w} \! \circ \! \bm{b} \over A_n}\right)\!d\bm{w}\! \right)\!d\bm{v}\! \right\}\! d\bm{z}_2\\
&= O\left(n^3A_n^{-2}h_1 \dots h_db_1 \dots b_d\right).
\end{align*}
This yields $\left({2A_n \over n^2h_1 \dots h_d}\right)^2E[W_{n,24}] = O\left({b_1 \dots b_d \over n h_1 \dots h_d}\right) = o(1)$. The results on $W_{n,21}$ $W_{n,22}$, $W_{n,23}$, and $W_{n,24}$ yield
\begin{align}\label{Vn2-new}
W_{n,2} &\stackrel{p}{\to} 0.
\end{align}

For $W_{n,1}$, applying similar arguments to show (\ref{Vn32-new}), we have
\begin{align*}
&\left({A_n \over n^2h_1 \dots h_d}\right)^{-1}E[W_{n,1}] \\
&= nh_1\dots h_d\int K^2(\bm{z})\eta^2(\bm{z} \circ \bm{h})g(\bm{z})d\bm{z}\\
&\quad + n(n-1)A_n^{-1}h_d \dots h_d\int_{A_n \bm{h}R'_{\bm{h},0}}\!\!\!\!\!\!\!\!\!\!\! \!\sigma_{\bm{e}}(\bm{v})\bar{K}_b(\bm{v}) \left(\int_{R_{\bm{h},0}((\bm{v} \circ \bm{h}^{-1})/A_n)}\!\!K\!\! \left(\!\bm{z}_2  \!+\! {\bm{v} \circ \bm{h}^{-1}\over A_n}\!\right)\!\! K(\bm{z}_2)  \right. \\
&\left. \quad  \times \eta\! \left(\!\bm{z}_2 \circ \bm{h} \!+\! {\bm{v} \over A_n}\!\right)\!\eta(\bm{z}_2 \circ \bm{h})g\! \left(\! \bm{z}_2 \circ \bm{h} \!+\! {\bm{v} \over A_n}\!\right)g(\bm{z}_2 \circ \bm{h})d\bm{z}_2 \!\right)d\bm{v}\\
&=: W_{n,11} + W_{n,12}. 
\end{align*}

For $W_{n,11}$, we have $W_{n,11} = nh_1\dots h_d(\eta^2(0)g(0)\kappa_0^{(2)} + o(1))$.

For $W_{n,12}$, we have
\begin{align*}
W_{n,12} &= n(n-1)A_n^{-1}h_d \dots h_d\int_{A_n \bm{h}R'_{\bm{h},0}}\!\!\!\!\!\!\!\!\!\!\! \!\sigma_{\bm{e}}(\bm{v}) \left(\int_{R_{\bm{h},0}((\bm{v} \circ \bm{h}^{-1})/A_n)}\!\!K\!\! \left(\!\bm{z}_2  \!+\! {\bm{v} \circ \bm{h}^{-1}\over A_n}\!\right)\!\! K(\bm{z}_2)  \right. \\
&\left. \quad  \times \eta\! \left(\!\bm{z}_2 \circ \bm{h} \!+\! {\bm{v} \over A_n}\!\right)\!\eta(\bm{z}_2 \circ \bm{h})g\! \left(\! \bm{z}_2 \circ \bm{h} \!+\! {\bm{v} \over A_n}\!\right)g(\bm{z}_2 \circ \bm{h})d\bm{z}_2 \!\right)d\bm{v}\\
&+ n(n-1)A_n^{-1}h_d \dots h_d\int_{A_n \bm{h}R'_{\bm{h},0}}\!\!\!\!\!\!\!\!\!\!\! \!\sigma_{\bm{e}}(\bm{v})(\bar{K}_b(\bm{v})-1) \left(\int_{R_{\bm{h},0}((\bm{v} \circ \bm{h}^{-1})/A_n)}\!\!K\!\! \left(\!\bm{z}_2  \!+\! {\bm{v} \circ \bm{h}^{-1}\over A_n}\!\right)\!\! K(\bm{z}_2)  \right. \\
&\left. \quad  \times \eta\! \left(\!\bm{z}_2 \circ \bm{h} \!+\! {\bm{v} \over A_n}\!\right)\!\eta(\bm{z}_2 \circ \bm{h})g\! \left(\! \bm{z}_2 \circ \bm{h} \!+\! {\bm{v} \over A_n}\!\right)g(\bm{z}_2 \circ \bm{h})d\bm{z}_2 \!\right)d\bm{v}\\
&=: W_{n,121} + W_{n,122}. 
\end{align*}

For $W_{n,121}$, from the proof of Theorem 4.1, we have
\begin{align*}
W_{n,121} &= n^2A_n^{-1}h_1 \dots h_d\left(\eta^2(\bm{0})g^2(\bm{0})\kappa_0^{(2)}\int \sigma_{\bm{e}}(\bm{v})d\bm{v} + o(1)\right).
\end{align*}

For $W_{n,122}$, observe that  for any $M>0$, 
\begin{align*}
&W_{n,122} \\
&= n(n-1)A_n^{-1}h_d \dots h_d\int_{A_n \bm{h}R'_{\bm{h},0} \cap \{\|\bm{v}\| \leq M\}}\!\!\!\!\!\!\!\!\!\!\! \!\!\!\!\!\sigma_{\bm{e}}(\bm{v})(\bar{K}_b(\bm{v})-1)\\ 
&\quad \left(\int_{R_{\bm{h},0}((\bm{v} \circ \bm{h}^{-1})/A_n)}\!\!\!\!\!K\!\! \left(\!\bm{z}_2  \!+\! {\bm{v} \circ \bm{h}^{-1}\over A_n}\!\right)\!\! K(\bm{z}_2) \eta\! \left(\!\bm{z}_2 \circ \bm{h} \!+\! {\bm{v} \over A_n}\!\right)\!\eta(\bm{z}_2 \circ \bm{h}) \right. \\
&\left. \quad \times g\! \left(\! \bm{z}_2 \circ \bm{h} \!+\! {\bm{v} \over A_n}\!\right)g(\bm{z}_2 \circ \bm{h})d\bm{z}_2 \!\right)d\bm{v}\\
& + n(n-1)A_n^{-1}h_d \dots h_d\int_{A_n \bm{h}R'_{\bm{h},0} \cap \{\|\bm{v}\| > M\}}\!\!\!\!\!\!\!\!\!\!\! \!\!\!\!\!\sigma_{\bm{e}}(\bm{v})(\bar{K}_b(\bm{v})-1) \\ &\quad \left(\int_{R_{\bm{h},0}((\bm{v} \circ \bm{h}^{-1})/A_n)}\!\!\!\!\!K\!\! \left(\!\bm{z}_2  \!+\! {\bm{v} \circ \bm{h}^{-1}\over A_n}\!\right)\!\! K(\bm{z}_2)\eta\! \left(\!\bm{z}_2 \circ \bm{h} \!+\! {\bm{v} \over A_n}\!\right)\!\eta(\bm{z}_2 \circ \bm{h})  \right. \\
&\left. \quad  \times g\! \left(\! \bm{z}_2 \circ \bm{h} \!+\! {\bm{v} \over A_n}\!\right)g(\bm{z}_2 \circ \bm{h})d\bm{z}_2 \!\right)d\bm{v}\\
&=: W_{n,1221} + W_{n,1222}. 
\end{align*}

Observe that
\begin{align*}
|W_{n,1221}| &\lesssim n^2A_n^{-1}h_1\dots h_d{M \over \min_{1 \leq j \leq d}b_j},\\
|W_{n,1222}| &\lesssim n^2A_n^{-1}h_1\dots h_d\int_{\|\bm{v}\|>M}|\sigma_{\bm{e}}(\bm{v})|d\bm{v}
\end{align*}
Then by taking $M = \min_{1 \leq j \leq d}b_j^{1/2}$, we have 
\[
W_{n,1221} = o(n^2A_n^{-1}h_1\dots h_d),\ W_{n,1222} = o(n^2A_n^{-1}h_1\dots h_d). 
\]
The results on $W_{n,121}$, $W_{n,1221}$, and $W_{n,1222}$ yield
\[
W_{n,12} = n^2A_n^{-1}h_1 \dots h_d\left(\eta^2(\bm{0})g^2(\bm{0})\kappa_0^{(2)}\int \sigma_{\bm{e}}(\bm{v})d\bm{v} + o(1)\right).
\]
This and the result on $W_{n,11}$ yield
\begin{align}\label{Vn1-new}
E[W_{n,1}] &= \kappa\eta^2(\bm{0})g(\bm{0})\kappa_0^{(2)}  + \eta^2(\bm{0})g^2(\bm{0})\kappa_0^{(2)}\int \sigma_{\bm{e}}(\bm{v})d\bm{v} + o(1).
\end{align}

Combining (\ref{Vn3-new}), (\ref{Vn2-new}), (\ref{Vn1-new}) and the results in the proof of Theorem 4.1, we have
\begin{align*}
\hat{W}_{n,1}(\bm{0}) &\stackrel{p}{\to} \kappa(\eta^2(\bm{0}) + \sigma_{\varepsilon}^2(\bm{0}))g(\bm{0}) + \eta^2(\bm{0})g^2(\bm{0})\int \sigma_{\bm{e}}(\bm{v})d\bm{v}
\end{align*} 
and this yields the desired result.
\end{proof}

\subsection{Proof of Corollary 5.1}\label{sec: Cor5.1-proof}

\begin{proof}
Corollary 5.1 follows immediately from Theorem 4.1 and Proposition 5.1. %\ref{thm: LP-CLT} and Proposition \ref{prp: var-est}. 
\end{proof}

\section{Proofs for Section \ref{sec:TST}}\label{sec: TST-proof}

In this section, we prove Proposition \ref{prp: LP-TST} (Section \ref{sec: prp-TST-proof}), Proposition \ref{prp: var-est-TST} (Section \ref{sec: prp var-est-TST proof}), and Corollary \ref{cor: LP-CI-TST} (Sections \ref{sec: cor CI-TST proof}). 

\subsection{Proof of Proposition \ref{prp: LP-TST}}\label{sec: prp-TST-proof}

\begin{proof}
For any $\bm{t} = (t_0,t_1,\dots, t_d, t_{11},\dots,t_{dd},\dots,t_{1\dots1},\dots, t_{d\dots d})' \in \mathbb{R}^D$, we define
\begin{align*}
\overline{W}_{n1}(\bm{0}) &:= \sum_{\ell_1=1}^{n_1}K_{Ah}\left(\bm{X}_{1,\ell_1}\right)\left[\bm{t}'H^{-1}
\left(
\begin{array}{c}
1 \\
\check{\bm{X}}_{1,\ell_1}
\end{array}
\right)\right]\underbrace{\left(\eta_1\left({\bm{X}_{1, \ell_1} \over A_n}\right)e_1(\bm{X}_{1,\ell_1}) + \sigma_{\varepsilon,1}\left({\bm{X}_{1,\ell_1} \over A_n}\right)\varepsilon_{1,\ell_1}\right)}_{=: \overline{e}_{n1,\ell_1} + \overline{\varepsilon}_{n1,\ell_1}}, \\
\overline{W}_{n2}(\bm{0}) &:= \sum_{\ell_2=1}^{n_2}K_{Ah}\left(\bm{X}_{2,\ell_2}\right)\left[\bm{t}'H^{-1}
\left(
\begin{array}{c}
1 \\
\check{\bm{X}}_{2,\ell_2}
\end{array}
\right)\right]\underbrace{\left(\eta_2\left({\bm{X}_{2,\ell_2} \over A_n}\right)e_2(\bm{X}_{2,\ell_2}) + \sigma_{\varepsilon,2}\left({\bm{X}_{2,\ell_2} \over A_n}\right)\varepsilon_{2,\ell_2}\right)}_{=: \overline{e}_{n2,\ell_2} + \overline{\varepsilon}_{n2,\ell_2}}. 
\end{align*}
By inspecting the proof of Theorem 4.1, %\ref{thm: LP-CLT}, 
to show Proposition \ref{prp: LP-TST}, it is sufficient to verify
\begin{align*}
&E\left[\left(\overline{W}_{n1}(\bm{0}) -\overline{W}_{n1}(\bm{0})\right)^2\right]/(h_1 \dots h_d )\\ 
&= \left(n_1\left\{(\eta_1^2(\bm{0}) + \sigma_{\varepsilon,1}^2(\bm{0}))g_1(\bm{0}) + n_1A_n^{-1}\eta_1^2(\bm{0})g_1^2(\bm{0})\int \sigma_{\bm{e},11}(\bm{v})d\bm{v}\right\} \right.\\
&\left. \quad + n_2\left\{(\eta_2^2(\bm{0}) + \sigma_{\varepsilon,2}^2(\bm{0}))g_2(\bm{0}) + n_2A_n^{-1}\eta_2^2(\bm{0})g_2^2(\bm{0})\int \sigma_{\bm{e},22}(\bm{v})d\bm{v}\right\} \right. \\
&\left. \quad -2 n_1 n_2A_n^{-1}\eta_1(\bm{0})\eta_2(\bm{0})g_1(\bm{0})g_2(\bm{0})\int \sigma_{\bm{e},12}(\bm{v})d\bm{v} \right)\\
&\quad \times \left(\int K^2(\bm{z})\left[\bm{t}'
\left(
\begin{array}{c}
1 \\
\check{\bm{z}}
\end{array}
\right)\right]^2d\bm{z}\right)(1 + o(1)),\ n \to \infty. 
\end{align*}
Let $E_{\bm{X}_{12}}$ denote the expectation with respect to $\{\bm{X}_{1,\ell_1}\}$ and $\{\bm{X}_{2,\ell_2}\}$ and let $E_{\cdot \mid \bm{X}_{12}}$ denote the conditional expectation given $\sigma(\{\bm{X}_{1,\ell_1}\} \cup \{\bm{X}_{2,\ell_2}\})$. Observe that 
\begin{align*}
&E_{\cdot \mid \bm{X}_{12}}\left[\left(\overline{W}_{n1}(\bm{0}) -\overline{W}_{n2}(\bm{0})\right)^2\right]\\
&= \sum_{\ell_{11},\ell_{12}=1}^{n_1}E_{\cdot \mid \bm{X}_{12}}\left[K_{Ah}\left(\bm{X}_{1,\ell_{11}}\right)K_{Ah}\left(\bm{X}_{1,\ell_{12}}\right)\left[\bm{t}'H^{-1}
\left(
\begin{array}{c}
1 \\
\check{\bm{X}}_{1,\ell_{11}}
\end{array}
\right)\right]\left[\bm{t}'H^{-1}
\left(
\begin{array}{c}
1 \\
\check{\bm{X}}_{1,\ell_{12}}
\end{array}
\right)\right] \right. \\
&\left. \quad \times (\overline{e}_{n1,\ell_{11}} + \overline{\varepsilon}_{n1,\ell_{11}})(\overline{e}_{n1,\ell_{12}} + \overline{\varepsilon}_{n1,\ell_{12}})\right]\\
&\quad + \sum_{\ell_{21}, \ell_{22}=1}^{n_2}E_{\cdot \mid \bm{X}_{12}}\left[K_{Ah}\left(\bm{X}_{2,\ell_{21}}\right)K_{Ah}\left(\bm{X}_{2,\ell_{22}}\right)\left[\bm{t}'H^{-1}
\left(
\begin{array}{c}
1 \\
\check{\bm{X}}_{2,\ell_{21}}
\end{array}
\right)\right]\left[\bm{t}'H^{-1}
\left(
\begin{array}{c}
1 \\
\check{\bm{X}}_{2,\ell_{22}}
\end{array}
\right)\right] \right. \\
&\left. \quad \times (\overline{e}_{n2,\ell_{21}} + \overline{\varepsilon}_{n2,\ell_{21}})(\overline{e}_{n2,\ell_{22}} + \overline{\varepsilon}_{n2,\ell_{22}})\right]\\
&\quad -2\sum_{\ell_1=1}^{n_1}\sum_{\ell_2=1}^{n_2}E_{\cdot \mid \bm{X}_{12}}\left[K_{Ah}\left(\bm{X}_{1,\ell_1}\right)K_{Ah}\left(\bm{X}_{2,\ell_2}\right)\left[\bm{t}'H^{-1}
\left(
\begin{array}{c}
1 \\
\check{\bm{X}}_{1,\ell_1}
\end{array}
\right)\right]\left[\bm{t}'H^{-1}
\left(
\begin{array}{c}
1 \\
\check{\bm{X}}_{2,\ell_2}
\end{array}
\right)\right] \right. \\
&\left. \quad \times (\overline{e}_{n1,\ell_1} + \overline{\varepsilon}_{n1,\ell_1})(\overline{e}_{n2,\ell_2} + \overline{\varepsilon}_{n2,\ell_2})\right]\\
&=: \overline{W}_{n11} +\overline{W}_{n12} -2\overline{W}_{n13}. 
\end{align*}
Applying the same argument in Step 2 of the proof of Theorem 4.1, %\ref{thm: LP-CLT}, 
we have
\begin{align*}
E_{\bm{X}_{12}}[\overline{W}_{n1\ell}] &= n_{\ell}h_1 \dots h_d g_{\ell}(\bm{0})\left\{(\eta_\ell^2(\bm{0}) + \sigma_{\varepsilon,\ell}^2(\bm{0})) + n_{\ell}A_n^{-1}\eta_\ell^2(\bm{0})g_{\ell}(\bm{0})\int \sigma_{\bm{e},\ell \ell}(\bm{v})d\bm{v}\right\}\\
&\quad \times \left(\int K^2(\bm{z})\left[\bm{t}'
\left(
\begin{array}{c}
1 \\
\check{\bm{z}}
\end{array}
\right)\right]^2d\bm{z}\right)(1 + o(1)),\ \ell = 1,2,\\
E_{\bm{X}_{12}}[\overline{W}_{n13}] &= n_1n_2A_n^{-1}h_1 \dots h_d \left(\eta_1(\bm{0})\eta_2(\bm{0})g_1(\bm{0})g_2(\bm{0})\int \sigma_{\bm{e},12}(\bm{v})d\bm{v}\right)(1 + o(1))
\end{align*}
as $n \to \infty$. Therefore, we obtain the desired result. 
\end{proof}

\subsection{Proof of Proposition \ref{prp: var-est-TST}}\label{sec: prp var-est-TST proof}

\begin{proof}

Applying the same argument in the proof of Proposition 5.1, %\ref{prp: var-est},
we have that as $n \to \infty, $
\begin{align*}
\overline{g}_{n_k}(\bm{0}) &= g_k(\bm{0}) + o_p(1),\ k=1,2, \\
\overline{V}_{n,1}(\bm{0}) &= \kappa_0^{(2)}\left(\kappa(\eta_k^2(\bm{0}) + \sigma_{k,\varepsilon}^2(\bm{0})) + \eta_k^2(\bm{0})g_k^2(\bm{0})\int \sigma_{\bm{e},kk}(\bm{v})d\bm{v}\right) + o_p(1),\\
\overline{V}_{n,2}(\bm{0}) &= \kappa_0^{(2)}\left(\theta \kappa(\eta_k^2(\bm{0}) + \sigma_{k,\varepsilon}^2(\bm{0})) + \eta_k^2(\bm{0})g_k^2(\bm{0})\int \sigma_{\bm{e},kk}(\bm{v})d\bm{v}\right) + o_p(1),\\
\overline{V}_{n,3}(\bm{0}) &= \kappa_0^{(2)}\eta_1(\bm{0})\eta_2(\bm{0})g_1(\bm{0})g_2(\bm{0})\int \sigma_{\bm{e},12}(\bm{v})d\bm{v} + o_p(1). 
\end{align*}
Therefore, $\check{V}_n(\bm{0}) \stackrel{p}{\to} \overline{V}_1(\bm{0}) + \overline{V}_2(\bm{0}) - 2\overline{V}_3(\bm{0})$ as $n \to \infty$. 

\end{proof}

\subsection{Proof of Corollary \ref{cor: LP-CI-TST}}\label{sec: cor CI-TST proof}

\begin{proof}
Corollary \ref{cor: LP-CI-TST} follows immediately from Propositions \ref{prp: LP-TST} and \ref{prp: var-est-TST}. 
\end{proof}

\section{Proofs for Section \ref{sec:examples}}\label{Appendix: Sec6}

In this section, we prove Proposition \ref{Levy-MA-moment} (Section \ref{sec: Levy-MA-moment proof}) and Proposition \ref{Levy-MA-moment-unif} (Section \ref{sec: Levy-MA-moment-unif proof}).

\subsection{Proof of Proposition \ref{Levy-MA-moment}}\label{sec: Levy-MA-moment proof}

\begin{proof}
Define $r_1 = \min_{1\leq j,k \le 2}r_{1,jk}$. We first check the asymptotic negligibility of the random field $\bm{e}_{2,m_n}$, that is, 
\begin{align}\label{e2m-asy-neg}
\max_{1 \leq i \leq n}e_{2j,m_{n}}(\bm{X}_{i}) = O_p\left(\exp\left(-{r_1n^{\zeta_0\zeta_1\zeta_2} \over 2}\right)\right),\ n \to \infty,
\end{align}
Note that under Condition (a), we have $E[|e_{j}(\bm{0})|^6]<\infty$ since $\bm{e}$ is Gaussian. Under Condition (b), we also have $E[|L_j([0,1]^d)|^{6}] < \infty$ since $\int_{|x|>1}|x|^{6}\nu_{0,j}(x)dx <\infty$ (cf. Theorem 25.3 in \cite{Sa99}). Define $\sigma_{\bm{e}_{1,m_n}}^{(j,k)}(\bm{x}) = E[e_{1j,m_n}(\bm{0})e_{1k,m_n}(\bm{x})]$, $j,k = 1,2$. 
Then we have that 
\begin{align*}
E[|e_{1j,m_n}(\bm{0})|^6] & \leq E[|e_j(\bm{0})|^6] \lesssim \int e^{-6r_1\|\bm{u}\|}d\bm{u}<\infty, \\ %\label{e-bi-moment}\\
|\sigma_{\bm{e}_{1,m_n}}^{(j,k)}(\bm{x})| & \lesssim |E[e_j(\bm{0})e_k(\bm{x})]| \lesssim \int e^{-r_1\|\bm{u}\|}e^{-r_1\|\bm{x}-\bm{u}\|}d\bm{u} \nonumber \\ 
&\leq \int e^{-r_1\|\bm{u}\|}e^{-{r_1 \over 2}(\|\bm{x}\|-\|\bm{u}\|)}d\bm{u} \lesssim  e^{-{r_1 \over 2}\|\bm{x}\|}. %\label{e-bi-cov}.
\end{align*}
The latter implies that $\int |\sigma_{\bm{e}_{1,m_n}}^{(j,k)}(\bm{v})|d\bm{v}<\infty$, $j,k=1,2$. Likewise, 

\begin{align*}
E[(e_{2j,m_n}(\bm{0}))^{4}] &\lesssim \int_{\mathbb{R}^{d}}e^{-4r_1\|\bm{u}\|}\left(1- \psi_{0}\left(\|\bm{u}\| : m_{n}\right)\right)^{4}d\bm{u}\\
& \lesssim  \int_{\|\bm{u}\| \geq m_{n}/4}e^{-4r_1\|\bm{u}\|}\left|1 + {4 \over m_{n}}\left(\|\bm{u}\| - {m_{n} \over 2}\right)\right|^{4}d\bm{u}\\
& \lesssim \int_{\|\bm{u}\| \geq m_{n}/4}e^{-4r_1\|\bm{u}\|}\left|1 + {4\|\bm{u}\| \over m_{n}}\right|^{4}d\bm{u}\\
&\leq 2^{q-1}  \int_{\|\bm{u}\| \geq m_{n}/4}e^{-4r_1\|\bm{u}\|}\left(1 + {4^{4}\|\bm{u}\|^{4} \over m_{n}^{4}}\right)d\bm{u}\\
&\lesssim  \int_{m_{n}/4}^{\infty}e^{-4r_1t}\left(1 + {4^{4}t^{4} \over m_{n}^{4}}\right)t^{d-1}dt \\
&\lesssim  m_{n}^{d-1}e^{-r_1m_{n}}.
\end{align*}
By Markov's inequality and Lemma 2.2.2 in \cite{vaWe96}, we have 
\begin{align*}
P_{\cdot \mid \bm{X}}\left(\left|\max_{1 \leq i \leq n}e_{2j,m_{n}}(\bm{X}_{i})\right| > \varrho \right) &\leq \varrho^{-1}E_{\cdot \mid \bm{X}}\!\left[\max_{1 \leq i \leq n}\left|e_{2j,m_{n}}(\bm{X}_{i})\right|\right]\\ 
&\leq \varrho^{-1}n^{1/4}\max_{1 \leq i \leq n}\left(\!E_{\cdot \mid \bm{X}}\left[\left|e_{2j,m_n}(\bm{0})\right|^{4}\right]\right)^{1/4}\\
&\lesssim \varrho^{-1}n^{1/4}m_n^{(d-1)/4}e^{-r_1m_n/4}. 
\end{align*}
Therefore, under the assumptions of Proposition \ref{Levy-MA-moment}, we have (\ref{e2m-asy-neg}), which implies that $\bm{e}_{2,m_n}$ is asymptotically negligible. Hence we can replace $\bm{e}$ with $\bm{e}_{1,m_n}$ in the results in Section 4. %\ref{sec:main}.

Next we check the mixing conditions on $\bm{e}_{1,m_n}$. Let $\alpha_{\bm{e}_1}(a;b)$ be the $\alpha$-mixing coefficients of $\bm{e}_{1,m_n}$. Note that $\alpha_{\bm{e}_1}(a;b) \leq \alpha(a;b)$. Since $\bm{e}_{1,m_n}$ is $m_n$-dependent, under the assumptions of Proposition \ref{Levy-MA-moment}, we have $\alpha_1(\underline{A}_{n2}) = 0$, which yields
\begin{align*}
&\left({A_n h_1 \dots h_d \over A_n^{(1)}}\right)\alpha_1(\underline{A}_{n2})\varpi_1(A_n h_1 \dots h_d) = 0,\\
&A_n^{(1)} \left(\alpha_1^{1-2/q}(\underline{A}_{n2}) + \sum_{k=\underline{A}_{n1}}^{\infty}k^{d-1}\alpha_1^{1-2/q}(k)\right)\varpi_1^{1-2/q}(A_n^{(1)}) = 0. 
\end{align*}
Moreover, 
\begin{align*}
\left({A_n^{(1)} \over A_n h_1 \dots h_d}\right)\sum_{k=1}^{\overline{A}_{n1}}k^{2d-1}\alpha_1^{1-4/q}(k) & \lesssim \left({A_n^{(1)} \over A_n h_1 \dots h_d}\right)\sum_{k=1}^{m_n}k^{2d-1}\\
&\leq \left({A_n^{(1)} \over A_n h_1 \dots h_d}\right)m_n^{2d}\\
&\lesssim n^{-\zeta_0\left\{1-\zeta_1(1+\zeta_2)\right\}+ \zeta_3 } = o(1).
\end{align*}
\begin{align*}
&\left\{\left({\overline{A}_{n1} \over \underline{A}_{n1}}\right)^d\left({\overline{A}_{n2} \over \overline{A}_{n1}}\right) + \left({A_n^{(1)} \over \underline{A}_{n1}^d}\right)\left({\left(\overline{A_n h}\right)^d \over A_n h_1 \dots h_d}\right)\left({\overline{A}_{n1} \over \overline{A_n h}}\right)\right\}\sum_{k=1}^{\overline{A}_{n1}}k^{d-1}\alpha_1^{1-2/q}(k)\\
&\lesssim \left\{\left({\overline{A}_{n2} \over \overline{A}_{n1}}\right) + \left({\overline{A}_{n1} \over \overline{A_n h}}\right)\right\}m_n^{d} \lesssim \left(n^{{\zeta_0 \zeta_1 \zeta_2 \over d} - {\zeta_0 \zeta_1 \over d}} + n^{-{\zeta_0 \zeta_1 \over d} - {\zeta_0 \over d}+{\zeta_3 \over d}}\right)n^{{\zeta_0 \zeta_1 \zeta_2 \over 2}}\\
& = n^{\zeta_0 \zeta_1 \left\{ \left({d+2 \over 2d}\right) \zeta_2 - {1 \over d}\right\}} + n^{-\zeta_1 \left\{1 - \left(1 + {d \over 2}\zeta_0\right)\zeta_2\right\} + \zeta_3} = o(1). 
\end{align*}
We can also check that $A_{n,j}h_j/A_{n1,j} \to \infty$ as $n \to \infty$ and that Assumptions 4.1 %\ref{Ass:band} 
(ii), (iii), and (iv) are satisfied. Therefore, we obtain the desired result. 
\end{proof}

\subsection{Proof of Proposition \ref{Levy-MA-moment-unif}}\label{sec: Levy-MA-moment-unif proof}

\begin{proof}
Define 
\begin{align*}
\Psi_{1,\bm{e}_2}(\bm{z}) &= {1 \over n^2A_n^{-1}h_1 \dots h_d }\sum_{i=1}^{n}\!K_{Ah}(\bm{X}_i \!-\! A_n \bm{z})H^{-1}\!
\left(\!
\begin{array}{c}
1 \\
\check{(\bm{X}_i-A_n\bm{z})}
\end{array}
\! \right)\! \eta \! \left({\bm{X}_i \over A_n}\right)\! e_{2,m_n}\!(\bm{X}_i).
\end{align*}
By the same argument in the proof of Proposition \ref{Levy-MA-moment}, we can show that 
\begin{align}\label{e2m-asy-neg-2}
\max_{1 \leq i \leq n}|e_{2,m_{n}}(\bm{X}_{i})| = O_p\left(\exp\left(-{r_1n^{\zeta_0\zeta_1\zeta_2} \over 2}\right)\right),\ n \to \infty.
\end{align}
Then we have
\begin{align*}
\left|\Psi_{1,\bm{e}_2}(\bm{z})\right| &= O_p \! \left(\!{\exp\left(-{r_1n^{\zeta_0\zeta_1\zeta_2} \over 2}\right) \over n^2A_n^{-1}h_1 \dots h_d } \! \right)\!\! \left|\sum_{i=1}^{n}K_{Ah}(\bm{X}_i-A_n \bm{z})H^{-1}\!
\left(\!
\begin{array}{c}
1 \\
\check{(\bm{X}_i-A_n\bm{z})}
\end{array}
\! \right)\! \eta \! \left(\!{\bm{X}_i \over A_n}\!\right)\right|. 
\end{align*}
Applying Proposition \ref{prp:LP-unif} (\ref{LP-unif-R2}) with 
\begin{align*}
f_1(\bm{x}) &= e'_{j_1 \dots j_L}
\left(
\begin{array}{c}
1 \\
\check{\bm{x}}
\end{array}
\right)
,\ f_2(\bm{x}) = 1,\ f_3(\bm{x}) = \eta(\bm{z}), 
\end{align*}
we have that
\begin{align*}
\sup_{\bm{z} \in \mathrm{T}_n}\!\!\left|\Psi_{1,\bm{e}_2}(\bm{z})\right| &\!\leq O_p \!\! \left(\!\!{A_n \over n}\!\exp \! \left(\!-{r_1n^{\zeta_0\zeta_1\zeta_2} \over 2}\!\right)\!\!\right)\!\!\left(\!\sup_{\bm{z} \in \mathrm{T}_n}\!\! \left|\hat{\Psi}_{\mathrm{II}}(\bm{z}) \!-\! E[\hat{\Psi}_{\mathrm{II}}(\bm{z})]\right| \!+\! \sup_{\bm{z} \in \mathrm{T}_n}\!\! \left|\! E[\hat{\Psi}_{\mathrm{II}}(\bm{z})]\right|\right)\\
&\!= O_p\!\! \left({A_n \over n}\exp\left(-{r_1n^{\zeta_0\zeta_1\zeta_2} \over 2}\right)\right)\left( O_p\left(\sqrt{\log n \over nh_1\dots h_d}\right) + O(1)\right)\\
&\!= O_p\!\! \left(\exp\left(-{r_1n^{\zeta_0\zeta_1\zeta_2} \over 2}\right)\right)
\end{align*}
and this implies that $\bm{e}_{2,m_n}$ is asymptotically negligible. Further, under the assumptions in Proposition \ref{Levy-MA-moment-unif} we have that $\beta_1(\underline{A}_{n2}) = 0$, $A_{n,j}h_j/A_{n1,j} \sim n^{{\zeta_0(1-\zeta_1) - \zeta_3 \over d}} \gg 1$, 
\begin{align*}
\left(\!\!{A_n^{(1)} \over (\overline{A}_{n1})^d}\!\! \right) \sim 1,\ {A_n^{{1 \over 2}}(h_1 \dots h_d)^{{1 \over 2}} \over n^{{1 \over q_2}}(\overline{A}_{n1})^d} \sim n^{{\zeta_0(1-2\zeta_1) - \zeta_3 \over 2} - {1 \over q_2}}  \gg (\log n)^{{1 \over 2}+\iota}.
\end{align*}
Therefore, we can replace $\bm{e}$ with $\bm{e}_{1,m_n}$ in Theorem 5.1. % \ref{thm:LPR-unif}. 
\end{proof}

\section{Technical tools}\label{Appendix: tool}

We refer to the following lemmas without those proofs. 
\begin{lemma}[(5.19) in \cite{La03b}]\label{n summands}
Under Assumption 2.2, %\ref{Ass:sample}, 
we have
\[
P\left(\sum_{i=1}^{n}1\{\bm{X}_{i} \in \Gamma_{n,\bm{z}}(\bm{\ell}; \bm{\Delta})\} > C|\Gamma_{n,\bm{z}}(\bm{\ell}; \bm{\Delta})|nA_{n}^{-1}\ \text{for some $\bm{\ell} \in L_{n1}(\bm{z})$, i.o.}\right) = 0
\]
for any $\bm{\Delta} \in \{1,2\}^{d}$, where $C>0$ is a sufficiently large constant. 
\end{lemma}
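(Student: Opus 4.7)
The plan is to apply a Bernstein-type concentration inequality to each cell count individually, then control the maximum over $\bm{\ell} \in L_{n1}(\bm{z})$ via a union bound, and finally conclude with a Borel--Cantelli argument. Fix $\bm{\Delta}\in\{1,2\}^d$ and $\bm{\ell}\in L_{n1}(\bm{z})$. Since $\Gamma_{n,\bm{z}}(\bm{\ell};\bm{\Delta})\subset R_n$ and the sampling sites $\{\bm{X}_i\}$ are i.i.d.\ with density $A_n^{-1}g(\cdot/A_n)$, the indicators $Y_i^{(\bm{\ell})}:=1\{\bm{X}_i\in\Gamma_{n,\bm{z}}(\bm{\ell};\bm{\Delta})\}$ are i.i.d.\ Bernoulli with success probability
\[
p_{n,\bm{\ell}}=\int_{\Gamma_{n,\bm{z}}(\bm{\ell};\bm{\Delta})}A_n^{-1}g(\bm{x}/A_n)\,d\bm{x}\le \|g\|_\infty\,A_n^{-1}|\Gamma_{n,\bm{z}}(\bm{\ell};\bm{\Delta})|.
\]

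Applying Bernstein's inequality to $\sum_{i=1}^n(Y_i^{(\bm{\ell})}-p_{n,\bm{\ell}})$ and choosing $C$ larger than $\|g\|_\infty$, I obtain constants $c_1,c_2>0$ depending only on $C$ and $\|g\|_\infty$ such that
\[
P\!\left(\sum_{i=1}^n Y_i^{(\bm{\ell})}>C|\Gamma_{n,\bm{z}}(\bm{\ell};\bm{\Delta})|nA_n^{-1}\right)\le \exp\!\left(-c_1\,n A_n^{-1}|\Gamma_{n,\bm{z}}(\bm{\ell};\bm{\Delta})|\right).
\]
Taking a union bound over $\bm{\ell}\in L_{n1}(\bm{z})$ and using $|\Gamma_{n,\bm{z}}(\bm{\ell};\bm{\Delta})|\ge \prod_{j=1}^d A_{n2,j}=:A_n^{(2)}$ together with the elementary bound $[\![L_{n1}(\bm{z})]\!]\lesssim A_n h_1\cdots h_d/A_n^{(1)}$ gives
\[
P\!\left(\exists\,\bm{\ell}\in L_{n1}(\bm{z}):\,\sum_{i=1}^n Y_i^{(\bm{\ell})}>C|\Gamma_{n,\bm{z}}(\bm{\ell};\bm{\Delta})|nA_n^{-1}\right)\lesssim \frac{A_n h_1\cdots h_d}{A_n^{(1)}}\exp(-c_1 n A_n^{-1} A_n^{(2)}).
\]

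By Assumption~2.2(i), $n/A_n\ge c>0$ eventually, while Assumption~2.3(iii) forces $A_{n2,j}\to\infty$ polynomially (under the explicit rate specifications used elsewhere in the paper, e.g.\ $A_{n2,j}=A_{n1,j}^{\zeta_2}$ with $\zeta_2>0$). Consequently $n A_n^{-1} A_n^{(2)}$ grows like a positive power of $n$, so the exponential term dominates the polynomial prefactor $A_n h_1\cdots h_d/A_n^{(1)}$, and the probabilities above are summable in $n$. The first Borel--Cantelli lemma then yields the stated a.s.\ conclusion, after a trivial union bound over the finite set $\bm{\Delta}\in\{1,2\}^d$.

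The main obstacle is verifying that the polynomial factor $A_n h_1\cdots h_d/A_n^{(1)}$ (and, more generally, the number of boundary/small cells that enter the unified statement) does not defeat the exponential decay $\exp(-c_1 n A_n^{-1}A_n^{(2)})$. This reduces to a careful bookkeeping of the rates $A_{n,j}$, $h_j$, $A_{n1,j}$, $A_{n2,j}$ allowed by Assumptions~2.3(iii) and~4.1(v); once one fixes, for instance, the polynomial parametrisation in the discussion following Assumption~4.1, summability is immediate. No further probabilistic input beyond Bernstein's inequality for i.i.d.\ indicators is needed, because the sampling sites are independent by Assumption~2.2(ii) --- the dependence structure of $\bm{e}$ plays no role in this lemma.
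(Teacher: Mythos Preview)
The paper does not prove this lemma; it is listed in Section~\ref{Appendix: tool} under ``We refer to the following lemmas without those proofs'' and is attributed to equation~(5.19) in \cite{La03b}. There is therefore no in-paper proof to compare against.

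Your route---Bernstein for i.i.d.\ indicators, union bound over cells, Borel--Cantelli---is exactly the standard argument for this kind of binomial cell-count control and is essentially what Lahiri's original proof does. One small remark: the lemma as stated in the paper says only ``Under Assumption~2.2,'' but as you correctly observe, summability in Borel--Cantelli needs $nA_n^{-1}A_n^{(2)}$ to grow at least logarithmically in $n$, which uses $A_{n2,j}\to\infty$ from Assumption~2.3 (and in practice the polynomial parametrisation $A_{n2,j}=A_{n1,j}^{\zeta_2}$). This is an imprecision in the paper's hypothesis listing rather than a gap in your argument; once the block sizes are those used throughout the paper, your bookkeeping goes through without difficulty.
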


\begin{remark}
Lemma \ref{n summands} implies that each $\Gamma_{n,\bm{z}}(\bm{\ell};\bm{\Delta})$ contains at most \\$C|\Gamma_{n,\bm{z}}(\bm{\ell}; \bm{\Delta})|nA_{n}^{-1}$ samples almost surely. 
\end{remark}

\begin{lemma}[Corollary 2.7 in \cite{Yu94}]\label{indep_lemma}
Let $m \in \mathbb{N}$ and let $Q$ be a probability measure on a product space $(\prod_{i=1}^{m}\Omega_{i}, \prod_{i=1}^{m}\Sigma_{i})$ with marginal measures $Q_{i}$ on $(\Omega_{i}, \Sigma_{i})$. Suppose that $h$ is a bounded measurable function on the product probability space such that $|h|\leq M_{h}<\infty$. For $1 \leq a\leq b \leq m$, let $Q_{a}^{b}$ be the marginal measure on $(\prod_{i=a}^{b}\Omega_{i}, \prod_{i=a}^{b}\Sigma_{i})$. For a given $\tau>0$, suppose that, for all $1 \leq k \leq m-1$,  
\begin{align}\label{prod_bound}
\|Q - Q_{1}^{k}\times Q_{k+1}^{m}\|_{TV} \leq 2\tau,
\end{align}
where $Q_{1}^{k}\times Q_{k+1}^{m}$ is a product measure and $\| \cdot \|_{TV}$ is the total variation. Then
\[
| Qh -  Ph| \leq 2M_{h}(m-1)\tau.
\]
where $P=\prod_{i=1}^{m}Q_{i}$, $Qh=\int hdQ$, and $Ph=\int hdP$. 
\end{lemma}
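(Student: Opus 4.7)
The plan is a standard telescoping/hybrid argument that interpolates between $P=\prod_{i=1}^m Q_i$ and $Q=Q_1^m$ by progressively coupling one coordinate at a time. Specifically, I would define the intermediate measures
\[
P_k := Q_1^k \times \prod_{i=k+1}^m Q_i, \qquad k=1,\ldots,m,
\]
on $\prod_{i=1}^m \Omega_i$. Then $P_1 = \prod_{i=1}^m Q_i = P$ (since $Q_1^1 = Q_1$) and $P_m = Q_1^m = Q$, so the telescope
\[
Qh - Ph \;=\; \sum_{k=2}^{m} \bigl(P_k h - P_{k-1}h\bigr)
\]
has exactly $m-1$ summands, matching the factor $(m-1)$ in the target bound.

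For each $k \in \{2,\ldots,m\}$, I would reduce $|P_k h - P_{k-1}h|$ to a total-variation comparison on the first $k$ coordinates only. Under both $P_k$ and $P_{k-1}$ the coordinates $X_{k+1},\ldots,X_m$ are independent, with marginals $Q_{k+1},\ldots,Q_m$, and independent of $(X_1,\ldots,X_k)$; the only difference is that the joint law of $(X_1,\ldots,X_k)$ is $Q_1^k$ under $P_k$ and $Q_1^{k-1}\times Q_k$ under $P_{k-1}$. Introducing
\[
\widetilde h_k(x_1,\ldots,x_k) := \int h(x_1,\ldots,x_m)\,\prod_{i=k+1}^m Q_i(dx_i),
\]
which satisfies $|\widetilde h_k|\leq M_h$ by Fubini, one obtains
\[
P_k h - P_{k-1}h \;=\; \int \widetilde h_k\,d\bigl(Q_1^k - Q_1^{k-1}\times Q_k\bigr).
\]

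The key input is a TV bound on the signed measure in this integrand, supplied by the hypothesis combined with the fact that marginalization is a contraction in $\|\cdot\|_{TV}$. Indeed, applying the assumption with index $k-1$ gives $\|Q - Q_1^{k-1}\times Q_k^m\|_{TV}\leq 2\tau$; since pushing forward along the projection onto $\prod_{i=1}^k \Omega_i$ can only shrink $\|\cdot\|_{TV}$ (because $\|f\|_\infty=\|f\circ\pi\|_\infty$ in the dual characterization of total variation), this yields $\|Q_1^k - Q_1^{k-1}\times Q_k\|_{TV}\leq 2\tau$. The pairing inequality $|\int f\,d\mu|\leq \|f\|_\infty \|\mu\|_{TV}$ then gives $|P_k h - P_{k-1}h|\leq 2M_h\tau$, and summing the $m-1$ telescoped terms delivers $|Qh - Ph|\leq 2M_h(m-1)\tau$.

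There is no hard obstacle here; the argument is essentially bookkeeping. The only point where care is needed is the convention for $\|\cdot\|_{TV}$: I would work with the measure-theoretic total-variation norm $|\mu|(\Omega)$, for which the pairing bound holds with no extra factor of $2$ and marginalization is a non-expansion, so the constants on both sides of the hypothesis and the conclusion line up exactly.
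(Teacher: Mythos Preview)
Your telescoping/hybrid argument is correct and is exactly the standard proof of this result; the paper does not give its own proof but simply cites Yu (1994), Corollary~2.7, as a technical tool. Your handling of the two delicate points---that marginalization contracts total variation, and that the constants match under the $|\mu|(\Omega)$ convention---is accurate, so there is nothing to add.
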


\begin{lemma}[Bernstein's inequality]\label{Bernstein}
Let $X_{1},\hdots, X_{n}$ be independent zero-mean random variables. Suppose that  $\max_{1 \leq i \leq n}|X_{i}|\leq M<\infty$ a.s. Then, for all $t>0$,
\begin{align*}
P\left(\sum _{i=1}^{n}X_{i}\geq t\right)\leq \exp \left(-{{t^{2} \over 2} \over \sum_{i=1}^{n}E[X_{i}^{2}] + {Mt \over 3}}\right). 
\end{align*}
\end{lemma}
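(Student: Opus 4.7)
The plan is to follow the classical Chernoff-plus-MGF-bound route for bounded independent summands. First, for any $\lambda>0$ I would apply Markov's inequality to $e^{\lambda \sum_i X_i}$ and use independence to reduce the problem to bounding the individual moment generating functions $E[e^{\lambda X_i}]$.

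The key step is to control $E[e^{\lambda X_i}]$ using the boundedness $|X_i| \le M$ together with $E[X_i]=0$. I would expand the exponential as a power series and use the bound $E[|X_i|^k] \le M^{k-2} E[X_i^2]$ for $k\ge 2$ (which follows from $|X_i| \le M$). Combined with the elementary inequality $k! \ge 2 \cdot 3^{k-2}$ for $k \ge 2$, this gives
\begin{equation*}
E[e^{\lambda X_i}] \le 1 + \sum_{k=2}^{\infty} \frac{\lambda^k M^{k-2} E[X_i^2]}{k!} \le 1 + \frac{\lambda^2 E[X_i^2]}{2(1-\lambda M/3)}
\end{equation*}
for $0<\lambda<3/M$, and hence (using $1+x \le e^x$)
\begin{equation*}
E[e^{\lambda X_i}] \le \exp\!\left( \frac{\lambda^2 E[X_i^2]}{2(1-\lambda M/3)} \right).
\end{equation*}
Writing $V = \sum_{i=1}^{n} E[X_i^2]$ and taking the product over $i$, Chernoff yields
\begin{equation*}
P\!\left(\sum_{i=1}^{n} X_i \ge t\right) \le \exp\!\left( -\lambda t + \frac{\lambda^2 V}{2(1-\lambda M/3)} \right).
\end{equation*}

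Finally, I would optimize by choosing $\lambda = t/(V+Mt/3)$, which satisfies $\lambda M/3<1$ and gives $1-\lambda M/3 = V/(V+Mt/3)$. Plugging in, both terms simplify to multiples of $t^2/(V+Mt/3)$ and the exponent collapses to $-t^2/\{2(V+Mt/3)\}$, which is precisely the stated bound. The only subtle point is verifying the truncated-series bound for $E[e^{\lambda X_i}]$; everything else is arithmetic. I do not anticipate any real obstacle, since this is the textbook Bernstein argument and the independence hypothesis does all the heavy lifting through the product bound.
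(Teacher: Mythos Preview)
Your argument is the standard and correct proof of Bernstein's inequality; each step (the Chernoff bound, the moment-generating-function estimate via $k!\ge 2\cdot 3^{k-2}$, and the optimization $\lambda=t/(V+Mt/3)$) is valid as written. Note that the paper itself does not prove this lemma: it is listed among the technical tools that are ``referred to without proof,'' so there is no in-paper argument to compare against, and your textbook derivation is exactly what one would expect here.
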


%\newpage

%\appendix

%\section{Proofs}\label{Appendix: proof}

\bibliography{LPR-spatial-Ref}
\bibliographystyle{apalike}

%\begin{thebibliography}{99}
%\bibitem[Antoniadis, Paparoditis, and Sapatinas(2006)]{AnPaSa06}
%Antoniadis, A., Paparoditis, E. and Sapatinas, T. (2006). A functional wavelet-kernel approach for time series prediction. 
%\textit{J. Roy. Statist. Soc. Ser. B} \textbf{86}, 837-857.

%\end{thebibliography}

\end{document}